\documentclass[11pt, letterpaper]{article}
\usepackage[utf8]{inputenc}

\title{Lower Bounds for 
Parallel and Randomized Convex Optimization}
\author{Jelena Diakonikolas\thanks{Partially supported by the NSF grant \#CCF-1740855. Part of this work was done while the author was a Microsoft Research Fellow at the Simons Institute for the Theory of Computing, for the program on Foundations of Data Science, and while she was a postdoctoral researcher at Boston University.}\\[5pt]
Department of Statistics\\[8pt]
UC Berkeley\\[8pt]
\texttt{jelena.d@berkeley.edu}
\and 
Crist\'{o}bal Guzm\'{a}n\thanks{Partially supported by the FONDECYT project 11160939, and the Millennium
Science Initiative of the Ministry
of Economy, Development, and
Tourism, grant “Millennium Nucleus Center for the
Discovery of Structures in Complex Data.”}\\[5pt]
Pontificia Universidad Cat\'{o}lica de Chile\\
Millennium Nucleus Center for the\\
Discovery of Structures
in Complex Data\\
\texttt{crguzmanp@mat.uc.cl}} 

\date{}
\usepackage{times}
\usepackage{cite}
\usepackage{amsfonts}
\usepackage{amsmath}
\usepackage{amsthm}
\usepackage{amssymb}
\usepackage{comment}
\usepackage{dsfont}
\usepackage{color}
\usepackage{bm}
\usepackage{url}
\usepackage[colorinlistoftodos,prependcaption]{todonotes}
\usepackage{thmtools}
\usepackage{thm-restate}
\usepackage{xfrac}
\usepackage{soul}
\usepackage[colorlinks,urlcolor=blue,citecolor=blue,linkcolor=blue]{hyperref}

\usepackage[margin=1in]{geometry}

\usepackage{graphicx}

\usepackage{array}
\usepackage{multirow}
\newcolumntype{M}[1]{>{\centering\arraybackslash}m{#1}}
\newcolumntype{N}{@{}m{0pt}@{}}

\newtheorem{theorem}{Theorem}[section]
\newtheorem{proposition}[theorem]{Proposition}
\newtheorem{lemma}[theorem]{Lemma}
\newtheorem*{mainthm}{Main Theorem}

\theoremstyle{definition}
\newtheorem{definition}[theorem]{Definition}

\newtheorem{claim}[theorem]{Claim}

\theoremstyle{remark}
\newtheorem{remark}[theorem]{Remark}

\numberwithin{equation}{section}

\newcommand{\vecspace}{\mathbf{E}}
\newcommand{\feasset}{{\cal X}}

\newcommand{\RR}{\mathbb{R}}
\newcommand{\prob}{\mathbb{P}}
\newcommand{\EE}{\mathbb{E}}
\newcommand{\eps}{\varepsilon}

\newcommand{\bx}{\boldsymbol{x}}
\newcommand{\bX}{\boldsymbol{X}}
\newcommand{\bh}{\boldsymbol{h}}
\newcommand{\bg}{\boldsymbol{g}}
\newcommand{\bz}{\mathbf{z}}
\newcommand{\br}{\mathbf{r}}
\newcommand{\by}{\boldsymbol{y}}
\newcommand{\bxi}{\bm{\xi}}
\newcommand{\blambda}{\boldsymbol{\lambda}}

\newcommand{\dd}{\mathrm{d}}

\newcommand{\Compl}{\mbox{Compl}}


\newcommand{\innp}[1]{\left\langle #1 \right\rangle}
\newcommand{\opnorm}[1]{\left\| #1 \right\|_{\mathrm{op}}}

\newcommand{\mI}{\mathbf{I}}

\begin{document}

\maketitle
\thispagestyle{empty}
\begin{abstract}
We study the question of whether parallelization in the exploration of the feasible set can be used to speed up convex optimization, in the local oracle model of computation. We show that the answer is negative for both deterministic and randomized algorithms applied to essentially any of the interesting geometries and nonsmooth, weakly-smooth, or smooth objective functions. In particular, we show that it is not possible to obtain a polylogarithmic (in the sequential complexity of the problem) number of parallel rounds  with a polynomial (in the dimension) number of queries per round. In the majority of these settings and when the dimension of the space is polynomial in the inverse target accuracy, our lower bounds match the oracle complexity of sequential convex optimization, up to at most a logarithmic factor in the dimension, which makes them (nearly) tight. 
Prior to our work, lower bounds for parallel convex optimization algorithms were only known in a small fraction of the settings considered in this paper, mainly applying to Euclidean ($\ell_2$) and $\ell_\infty$ spaces. 
Our work provides a more general approach for proving lower bounds in the setting of parallel convex optimization. 
\end{abstract}
\newpage
\setcounter{page}{1}
%
%
\section{Introduction}\label{sec:intro}

Convex optimization 
has been successfully applied to obtain
faster algorithms for solving linear systems~\cite{ST04,cohen2014solving,KOSZ13}, network flow problems~\cite{KLOS2014,Sherman2013,sherman2017area}, continuous relaxations of discrete problems, such as positive linear programs~\cite{d-allen2014using,d-wang2015unified} and submodular optimization~\cite{chekuri2015multiplicative}. In machine learning, accelerated methods~\cite{Nesterov1983,axgd,beck2009fast} %
are some of the workhorses of supervised learning through empirical risk minimization. 
Given the scale of modern datasets resulting in extremely large problem instances, an attractive approach to reducing the time required for performing computational tasks is via parallelization. Indeed, many classical problems in theoretical computer science are well-known to be solvable in polylogarithmic number of rounds of parallel computation, with polynomially-bounded number of processors. For examples in submodular optimization we refer to~\cite{BalkanskiSubmod:2018}, and even continuous approaches via the multilinear relaxation may be found in~\cite{Chekuri:2018,Ene:2018}.

When it comes to convex optimization, parallelization is in general highly beneficial in computing local function information (at a single point from the feasible set), such as its gradient or Hessian, and can generally be exploited to improve the performance of optimization algorithms. However, a natural barrier for further speedups is parallelizing the exploration of the feasible set. This leads to the following question: \\[5pt]
{\centerline{\emph{Is it possible to improve the oracle complexity of convex optimization via parallelization?} 
}}\\[5pt]
Here, oracle complexity is defined as the number of adaptive rounds an algorithm needs to query an arbitrary oracle providing local information about the function, such as, e.g., its value, gradient, Hessian, or a Taylor approximation at the queried point from the feasible set, before reaching a solution with a specified accuracy. Note that most of the commonly used optimization methods, such as, e.g., gradient descent, mirror descent and its special case -- multiplicative weights updates, Newton's method, the ellipsoid method, Frank-Wolfe, and Nesterov's accelerated method, all work in this local oracle model. 

Beyond 
its potential use as a generic way to accelerate convex optimization, parallelization in the exploration of the feasible set would also impact other areas. In stochastic convex optimization, gradient descent applied to the empirical risk can be seen as an adaptive data analysis procedure. Recent developments in this area~\cite{Dwork:2015} provide sample complexity bounds for this algorithm, from an application of differential privacy. Here, a reduction in the number of adaptive rounds could lead to improved sample complexity bounds for the performance of (optimization via the) ERM, as results in this literature are only mildly affected by parallel queries, whereas there is much higher sensitivity to the number of adaptive rounds.

The study of parallel oracle complexity of convex optimization was initiated by Nemirovski in the early '90s~\cite{Nemirovski:1994}. In this work, 
it was shown that for nonsmooth Lipschitz-continuous optimization over the $\ell_\infty$ ball, it is not possible to attain  polylogarithmic parallel round complexity with polynomially many processors. However, the lower bound does not match the sequential complexity $\Theta(d\ln(1/\varepsilon))$; the author conjectured that the parallel complexity in this case should be $\Omega(d\ln(1/\eps)/\ln(K))$.
Since the work of Nemirovski~\cite{Nemirovski:1994} and until very recently, there has been no further progress on this conjecture, 
nor in obtaining lower bounds for other settings, such as, e.g., weakly/strongly smooth optimization over more general feasible sets. 
Very recently, motivated by the applications in online learning, local differential privacy, and adaptive data analysis, several lower bounds for parallel convex optimization \emph{over the Euclidean  space} have been obtained~\cite{smith2017interaction,balkanski2018parallelization,nips2018-woodworth,duchi2018minimax} (for a more detailed description of these results, see Section~\ref{sec:related-work}). 
Our main result shows that it is not possible to improve the oracle complexity of convex optimization via parallelization, for deterministic or randomized algorithms,  
different levels of smoothness, and essentially all interesting geometries -- $\ell_p$ balls for $p \in [1, \infty]$, together with their matrix spectral analogues, known as Schatten spaces, $\mbox{Sch}_p$. 
The resulting lower bounds are robust to enlargements of the feasible set, and thus apply in the unconstrained case as well. This is a much more general result than previously addressed in the literature, where similar results were obtained only for (i) \emph{constrained Euclidean} ($\ell_2$) setups~\cite{smith2017interaction,balkanski2018parallelization,nips2018-woodworth,duchi2018minimax} and (ii) the \emph{nonsmooth} $\ell_\infty$ setup that only applies to deterministic algorithms~\cite{Nemirovski:1994}. 
The results for non-Euclidean settings require novel ideas and surpassing several technical challenges. Further, these settings are of fundamental interest. For example, $\ell_1$-setups naturally appear in sparsity-oriented learning applications; $\mbox{Sch}_{1}$ (a.k.a.~nuclear norm) appears in matrix completion problems~\cite{Nemirovski:2013}; finally, smooth $\ell_{\infty}$-setups have been used in the design of fast algorithms for network flow problems~\cite{Lee:2013,KLOS2014}. 

\subsection{Our Results}

Our results rule out the possibility of improvements by parallelization, showing that, in high dimensions, 
 sequential 
methods are already optimal for any amount of parallelization that is polynomial in the dimension.\footnote{Ruling out parallelization via an exponential number of queries is unlikely, since such a high number of queries would, in general, allow an algorithm to construct an $\eps$-net of the feasible set and choose the best point from it.} 
Our approach is to provide a generic lower bound for parallel oracle algorithms and use reductions between different classes of optimization problems. 
Below, $\varepsilon>0$ is the target accuracy, $K$ is the number of parallel queries per round, and $d$ is the dimension. 

\begin{mainthm}
(Informal)
Unless $K$ is exponentially large in the dimension $d$, any (possibly randomized) algorithm working in the local oracle model and querying up to $K$ points per round, when applied to the following classes of convex optimization problems over $\ell_p$ balls and 
$\mbox{Sch}_p$ balls:
\begin{itemize}
    \itemsep0em 
    \item Nonsmooth (Lipschitz-continuous) minimization for $1 < p < \infty$ and $d = \Omega(\mathrm{poly}(\frac{1}{\eps^{p + p/(p-1)}}))$;
    \item Smooth (Lipschitz-continuous gradient) minimization for $2\leq p \leq \infty$ and $d = \Omega(\mathrm{poly}(\frac{1}{\eps}))$;
    \item {Weakly-smooth (H\"{o}lder-continuous gradient) minimization} for $2\leq p \leq \infty$ and $d = \Omega(\mathrm{poly}(\frac{1}{\eps}))$
\end{itemize}
takes asymptotically at least as many rounds to reach an $\eps$-approximate solution as it would take without any parallelization, up to, at most, a $\sfrac{1}{\ln(d)}$ factor.
\end{mainthm}
\noindent As mentioned before, our result easily extends to unconstrained optimization over $\ell_p$ normed spaces. The small subset of the possible cases not included in the theorem are off by small factors and are still very informative: they rule out the possibility of any significant improvement in the round complexity via parallelization (see Table~\ref{tab:LB_summary}  and the discussions in Sections~\ref{sec:techniques} and~\ref{sec:apps}). 

{To present the results in a unified manner, we use the definition of weakly-smooth functions, 
i.e., functions with $\kappa$-H\"older-continuous gradient, which 
interpolates between the classes of nonsmooth  
($\kappa = 0$) and smooth functions ($\kappa = 1$) (see Section~\ref{sec:prelims} for a precise definition).} 
These two  
special cases 
are summarized in Table~\ref{tab:LB_summary}. For the precise statements encompassing the weakly-smooth cases ($\kappa \in (0, 1)$) as well as the specific {\em high-dimensional} regime for $d$, see Section~\ref{sec:apps}. 
\begin{table}[t] 
\centering
\begin{tabular}{|M{1.8cm} | M{2cm} | M{2.2cm} | M{3.5cm} | M{3.4cm} | N}
\hline\small
Function class	 & $p=1$ & $1< p <2$  & $2\leq p<\infty$ & $p=\infty$  &\\[5pt] \hline
\small Nonsmooth ($\kappa=0$) & $\Omega\big(\frac{1}{\varepsilon^{2/3}}\big)$ 
&   $\Omega\big(\frac{1}{\varepsilon^2}\big)$     &             
$\Omega\big(\frac{1}{\varepsilon^p}\big)$             &        
$\Omega\Big(\big(\frac{\varepsilon^2 d}{\ln(d K/\gamma)}\big)^{\sfrac{1}{3}}\Big)$ $(\ast)$        %
&\\[5pt]\hline
\small Smooth ($\kappa=1$) &  $\Omega\Big(\frac{1}{\ln(d)\varepsilon^{2/5}}\Big)$ &$\Omega\Big(\frac{1}{\ln (d) \varepsilon^{2/5}}\Big)$
& $\Omega\Big(\frac{1}{\min\{p, \ln(d)\}\varepsilon^{{p}/{(p+2)}}} \Big)$
&  $\Omega\Big(\frac{1}{\ln(d)\varepsilon}\Big)$ &\\[5pt] \hline
\end{tabular}
\caption{\small High probability lower bounds for parallel convex optimization, in the $\ell_p^d$ and $\mbox{Sch}_p^d$ setups. Here, $d$ is the dimension, $\varepsilon$ is the accuracy, $K$ is the number of parallel queries per round, and $1-\gamma$ is the confidence. Except for $(\ast)$, the high dimensional regime requires $d=\Omega(\mbox{poly}(1/\varepsilon,\ln(K/\gamma)))$. %
}\label{tab:LB_summary}
\end{table}

{The largest gap obtained by our results is in the nonsmooth $\ell_1$-setup. 
Here, the $\Omega(1/\varepsilon^{2/3})$ bound comes from a reduction from the $\ell_{\infty}$ case, which explains the discontinuity in the first row of the table. We also consider a non-standard setting of $\ell_p$-Lipschitz nonsmooth optimization for $p \in [1, 2)$ over an $\ell_2$ ball inscribed in the unit $\ell_p$ ball. Even in this smaller domain the complexity is 
$\Omega(1/\varepsilon^2)$, which provides a strong evidence of   higher complexity for the $\ell_1$-setting.}
\subsection{Overview of the Techniques}\label{sec:techniques}
Most of the lower bounds for large-scale convex optimization in the literature (e.g.,~\cite{Nemirovski:1983,Guzman:2015,nips2018-woodworth,balkanski2018parallelization}) are based on the construction of a hard problem instance defined as the maximum of affine functions. Each affine function $f_i$ is defined by its direction vector $\bz^i$ and offset $\delta_i$. Examples of the direction vectors that are typically used in these works include signed orthant vectors, uniform vectors from the unit sphere and scaled Rademacher sequences. At an intuitive level, a careful choice of these affine functions prevents any algorithm from learning more than one direction vector $\bz^i$ per adaptive round. At the same time, an appropriately chosen set of affine functions ensures that the algorithm needs to learn \emph{all} of the vectors $\bz^i$ before being able to construct an $\eps$-approximate solution. We take the same approach in this paper.

Most relevant to our work are the recent lower bounds for parallel convex optimization over Euclidean ($\ell_2$) spaces~\cite{balkanski2018parallelization,nips2018-woodworth}, which are tight in the large-scale regime. In these works, the argument about learning one vector $\bz^i$ at a time is derived by an appropriate concentration inequality, while the upper bound on the optimal objective value is obtained from a good candidate solution, built as a combination of the random vectors. In terms of learning a single vector $\bz^i$ per round, a simple application of the Hoeffding inequality suffices for the $\ell_2$ setups. 
%
%
However, there is no obvious way of generalizing the lower bounds for the Euclidean setting to the more general $\ell_p$ geometries. For example, in the $\ell_p$-setup for $p>2$, these arguments only lead to a lower bound of $\Omega(1/\varepsilon^2)$, which is far from the sequential complexity $\Theta(1/\varepsilon^p)$ (further discussed below). 
On the other hand, the use of relationships between the $\ell_p$ norms leads to uninformative lower bounds. In particular, for $p \in [1, 2),$ the appropriate application of inequalities relating $\ell_p$ norms needs to be done for both feasible sets (relating $\|\cdot\|_p$ and $\|\cdot\|_2$) and the Lipschitz (or smoothness) constants (relating $\|\cdot\|_{p^*}$ and $\|\cdot\|_2,$ where $p^* = \frac{p}{p-1}$). Unless $p \approx 2,$ this approach leads to a degradation in the lower bound by a polynomial factor in $d$. For example, if $\ell_2$ case is used to infer a lower bound for the $\ell_1$ setup, the resulting lower bound would be of the order $1/({d}\eps^2)$ and $1/(d\sqrt{\varepsilon})$, for nonsmooth and smooth cases, respectively. Such quantities are are far from the sequential lower bounds $\Omega(\ln(d)/\eps^2)$ and $\Omega(1/\sqrt{\eps})$ applying to the nonsmooth and smooth settings, respectively.

{Our lower bounds are based on families of 
random vectors $\bz^1,\ldots,\bz^M$ which: (i) satisfy concentration along their marginals, so the ``learning one vector per round'' argument applies; and (ii)  lead to a large negative optimal value via a minimax duality argument. Each particular regime will require different constructions of the random vectors, that we describe below. However, all lower bounds will be obtained from a general result (Theorem \ref{thm:MetaThm}) that shows that (i) and (ii) suffice to get a lower bound for parallel convex optimization and completely streamlines the analysis.}

\paragraph{Main Ideas in the Nonsmooth Setting.}
{As mentioned before, the main distinction in our constructions comes from the value of $p$.
In the case $p\leq 2$, it suffices to use dense random vectors, such as (scaled) Rademacher sequences. For this choice, certifying (i) is straightforward via Hoeffding's Inequality; however, as $p$ decreases the concentration becomes weaker, which for the limit case $p=1$ simply does not hold. On the other hand, property (ii) is established by an upper bound obtained via minimax duality. The optimality gap is determined by the infimum of the $\ell_{p^*}$ norm of convex combinations of the scaled Rademacher vectors; namely, by $\inf_{\lambda\in\Delta_M}\|\sum_{i\in[M]} \lambda_i\bz^i\|_{p^*}$. This quantity is proved to be $\Omega(\|\lambda\|_2)=\Omega(1/\sqrt{M})$ w.h.p., by the Khintchine Inequality together with a net argument. Thus, the choice  $M=O(1/\eps^2)$ leads to the desired optimality gap of $\Omega(\eps)$. 

On the other hand, the case $p>2$ has a rather different behavior. It is possible to observe that scaled Rademacher sequences (or random vectors on the unit $\ell_p$-sphere) only provide a $M=\Omega(1/\varepsilon^2)$ lower complexity bound; this happens again due to the Khintchine Inequality. To resolve this issue, we enforce disjointness of the supports of vectors $\bz^i$ (whose non-zero elements are independent scaled Rademacher random variables). This careful choice of vectors $\bz^i$ allows us to satisfy (ii) in a relatively straightforward manner. On the other hand, property (i) is guaranteed by choosing the supports of each $\bz^i$ large enough, which provides the necessary concentration of inner products.}

\paragraph{Main Ideas in the Smooth and Weakly Smooth Settings.}

To obtain lower bounds for smooth and weakly-smooth objectives in the case $p \geq 2$, we use infimal convolution smoothing from~\cite{Guzman:2015} applied to the following objective function:
\begin{equation} \label{eqn:hard_function} f(\bx)=\max\left\{\frac12\max_{i\in[M]}[\langle\bz^i,\bx\rangle-i\delta], \|\bx\|_p-R\right\}, 
\end{equation}
where $R>0$ is chosen so that within the $\|\cdot\|_p$-unit ball  the left term dominates. This added $\|\bx\|_p-R$ term in the definition of the objective functions makes our lower bounds robust to the enlargements of the feasible set, and further allows us to obtain results in the setting of $p \in [1, 2)$ discussed below.

The choice of our smoothing is not arbitrary: first, we need the smoothed function to be uniformly close to the nonsmooth one; second, we need to obtain the smallest possible smoothness constant on the objective function; and finally, we need the smoothing to respect the local behavior of $f(\cdot)$, so that the ``learning one vector per round'' argument applies. We show that this is possible by coupling our generic lower bound stated in Theorem~\ref{thm:MetaThm} with the smoothing from~\cite{Guzman:2015}. 
This approach leads to tight lower bounds for the smooth $\ell_p$-setup, for any constant $p \geq 2$, and nearly-tight lower bounds for $p$ that is larger than a constant. 

This type of smoothing is not directly applicable in the $p \in [1, 2)$ cases, as a consequence of the fact that there are no known regularizers for an infimal convolution smoothing.  This is also related to the fact that $\ell_p$ spaces for $p \in [1, 2)$ are not $2$-uniformly smooth (see, e.g.,~\cite{Ball:1994}),  which leads to a natural barrier for the approach. To overcome this barrier, we use the approach from~\cite{Guzman:2015}, which allows reducing the smooth $p \in [1, 2)$ cases from the $p=\infty$ case. While this approach leads to non-trivial lower bounds (see Table~\ref{tab:LB_summary} and Theorem~\ref{thm:1<p<2-smooth}), these lower bounds are not tight in general. For example, in the case of large-scale smooth minimization, the lower bound scales with  $\frac{1}{(\ln(1/\eps) + \ln\ln(K/\gamma))\eps^{2/5}}$. Apart from the logarithmic factor, this bound is off by a factor $1/\eps^{1/10}.$ This is a direct consequence of our lower bounds not being sufficiently tight in the low-dimensional  regime of the $\ell_\infty$ setting. We note that improving these low-dimensional $\ell_\infty$ lower bounds has been an open problem since the work of Nemirovski~\cite{Nemirovski:1994}. 

%
%
\subsection{Related Work}\label{sec:related-work}


As mentioned earlier, until very recently, the literature on black-box parallel convex optimization was extremely scarce. Here we summarize the main lines of work.

\noindent{\bf Worst-case Lower Bounds for Sequential Convex Optimization.} 
    Classical theory of (sequential) oracle complexity in optimization was developed by Nemirovski and Yudin in~\cite{Nemirovski:1983}. This work provides sharp worst-case lower bounds for nonsmooth optimization, and a suboptimal (and rather technical) lower bound for randomized algorithms, for $\ell_p$ settings, where $1\leq p\leq \infty$. Smooth convex optimization in this work is addressed by lower bounding the oracle complexity of convex quadratic optimization, which only applies to deterministic algorithms and the $\ell_2$ setup. Nearly-tight lower bounds for deterministic non-Euclidean smooth convex optimization were obtained only recently \cite{Guzman:2015}, mostly by the use of a smoothing of hard nonsmooth families. It is worth mentioning that none of these lower bounds are robust to parallelization.
    
\noindent{\bf Lower Bounds for Parallel Convex Optimization.} 
    The study of parallel oracle complexity in convex optimization was initiated by Nemirovski in~\cite{Nemirovski:1994}, providing a worst-case lower bound $\Omega\big((\frac{d}{\ln(2Kd)})^{1/3}\ln(1/\varepsilon)\big)$ on the complexity in the $\ell_{\infty}$-setup. The argument from~\cite{Nemirovski:1994} is based on a sequential use of the probabilistic method to generate the subgradients of a hard instance and applies to an arbitrary dimension beyond a fixed constant. The author conjectured that this lower bound is suboptimal, which still remains an open problem.
    
    More recently, several lower bounds have been obtained for various settings of parallel convex optimization, but all applying only to \emph{either box ($\ell_\infty$-ball) or $\ell_2$-ball constrained Euclidean spaces}. In particular,~\cite{smith2017interaction} showed that poly-log in $1/\eps$ oracle complexity is not possible with polynomially-many in $d$ parallel queries for nonsmooth Lipschitz-continuous minimization. This bound was further improved by~\cite{duchi2018minimax}, in the context of stochastic minimization with either Lipschitz-continuous or smooth and strongly convex objectives. 
    
    Tight lower bounds in the Euclidean setup have been obtained in~\cite{nips2018-woodworth} and \cite{balkanski2018parallelization}. 
    Both of these works provide a tight lower bound $\Omega(1/\varepsilon^2)$ for randomized algorithms and nonsmooth Lipschitz objectives, when the dimension is sufficiently high (polynomial in $1/\varepsilon$, which is similar to our setting). The work in~\cite{balkanski2018parallelization} further considers strongly convex Lipschitz objectives. While this setting is not considered in our work, we note that it is possible to incorporate it in our framework using the ideas from~\cite{Sridharan:2012}. To obtain lower bounds that apply against randomized algorithms,~\cite{balkanski2018parallelization} uses an intricate adaptivity argument. Our lower bound is based on a more direct application of the probabilistic method, and is arguably simpler.
    
    The work in~\cite{nips2018-woodworth} further considers an extension to stochastic and smooth objectives. However, the ``statistical term'' in~\cite{nips2018-woodworth} comes from a typical minimax estimation bound, and its accuracy can, in fact, be reduced by parallelization at a rate $1/\sqrt{N}$, where $N$ is the total number of queries. Their construction of subgradients for the hard function is based on random vectors from the unit sphere; our use of Rademacher sequences makes the analysis simpler and more broadly applicable. On the other hand,~\cite{nips2018-woodworth} also provides lower bounds for (non-local) prox oracles, which are not considered in this paper.
    
\noindent{\bf Adaptive Data Analysis.} In a separate line of work, there has been significant progress in understanding adaptivity in data analysis, with the goal of preventing overfitting. The typical learning model used in this framework is the Statistical Query (SQ) model, which applies to stochastic convex optimization~\cite{Feldman:2017}. In this literature, it was proved that, given a dataset of size $n$, the number of adaptive SQs that can be accurately answered is $\tilde\Theta(n^2)$, and this is achieved by an application of results from differential privacy~\cite{Dwork:2015,Steinke:2015}. Improved results for low-variance SQs have been obtained in~\cite{Feldman:2018}. The power of nonadaptive SQ algorithms for PAC learning has been studied in~\cite{Daniely:2018}, and is characterized by a large margin condition. Negative results here do not translate to the context of convex optimization; however, as mentioned earlier, our negative results rule out specific approaches to improve the sample complexity of stochastic convex optimization.
%
\subsection{Notation and Preliminaries}\label{sec:prelims}
%
%
%
%
\paragraph*{Vector Spaces and Classes of Functions.} 
Let $(\vecspace,\|\cdot\|)$ be a $d$-dimensional normed vector space, where $d < \infty$. We will denote vectors in this space by bold letters, e.g., $\bx,\by$, etc.
We denote by $(\vecspace^{\ast},\|\cdot\|_{\ast})$ its dual space,
and we use the bracket notation $\innp{\bz, \bx}$ to denote the evaluation of the linear functional $\bz\in\vecspace^{\ast}$
at a point $\bx\in \vecspace$; in particular, $\|\bz\|_{\ast}=\sup_{\|x\|\leq 1}\innp{\bz,\bx}$. 
We denote the ball of $\vecspace$ centered at $\bx$ and of radius $r$ by ${\cal B}_{\|\cdot\|}(\bx,r)$, and the unit ball by ${\cal B}_{\|\cdot\|}:={\cal B}_{\|\cdot\|}(0,1)$. Our most important case of study is the space $\ell_p^d=(\RR^d,\|\cdot\|_p)$, where $1\leq p\leq \infty$. For simplicity, in this case we will use the notation ${\cal B}_p^d(\bx,r):={\cal B}_{\|\cdot\|_p}(\bx,r)$. The dual space of $\ell_p^d$ is isometrically isomorphic to $\ell_{p^*}^d$, where $p^*=p/(p-1)$; in this case, the bracket is just the standard inner product in $\RR^d$. Other important example is the case of Schatten spaces: $\mbox{Sch}_{p}^d=(\RR^{d\times d},\|\cdot\|_{Sch,p})$. Here, for any $\bX\in\RR^{d\times d}$, $\|\bX\|_{Sch,p}=(\sum_{i=1}^d\sigma_i(\bX)^p)^{1/p}$,
where $\sigma_1(\bX),\ldots,\sigma_d(\bX)$ are the singular values of $\bX$.

Given $\kappa\geq 0$, we use ${\cal F}_{(\vecspace,\|\cdot\|)}^{\kappa}(\mu)$ to denote the class of convex functions 
$f:\vecspace\rightarrow \RR$ such that 
\begin{equation} \label{eq:high_order_smooth}
 \opnorm{ D^{\lfloor \kappa+1\rfloor} f(\by) -D^{\lfloor \kappa+1\rfloor}(\bx)} \leq \mu \|\by-\bx\|^{\kappa+1-\lfloor\kappa+1\rfloor} \qquad(\forall \bx,\, \by\in \vecspace),
\end{equation}
where $D^{t}$ is the $t^\mathrm{th}$ derivative operator (a $t^{\mathrm{th}}$ order multilinear form), and $\opnorm{A}:=\sup_{\|\bh\|\leq 1}|A[\bh;\ldots;\bh]|$ is the induced 
operator norm on symmetric multilinear forms w.r.t. $\|\cdot\|$.

To clarify this definition, let 
us provide some useful examples:
\begin{itemize}
\itemsep0em 
\item $\kappa=0$ corresponds to 
bounded variation of subgradients, $\|\nabla f(\bx)-\nabla f(\by)\|_{\ast}\leq \mu$. 
This class contains all $\mu/2$-Lipschitz convex functions, but is also invariant under affine perturbations.\footnote{Our lower bounds for nonsmooth optimization are in fact given by classes of Lipschitz convex functions, but to keep the notation unified we use \eqref{eq:high_order_smooth} instead.}
\item $0<\kappa<1$ corresponds to H\"older continuity of the gradient, $\|\nabla f(\by)-\nabla f(\bx)\|_{\ast}\leq \mu \|\by-\bx\|^{\kappa}$.
\item $\kappa=1$ corresponds to Lipschitz-continuity of the gradient, $\|\nabla f(\by)-\nabla f(\bx)\|_{\ast}\leq \mu\|\by-\bx\|$.
\item $\kappa=2$ corresponds to Lipschitz-continuous Hessian, $\opnorm{ H f(\by)- H f(\bx)}\leq \mu\|\by-\bx\|$. 
If $\|\cdot\|$ comes from an inner product, then 
the operator norm is the largest singular value of the operator.
\end{itemize} 

%
%
\paragraph*{Optimization Problems, Algorithms, and Oracles.}
We consider convex programs of the form
$$ \min\{ f(\bx):\,\, \bx\in \feasset  \},$$
where $f:\vecspace\to \RR$ is a convex function from a given class of objectives ${\cal F}$
(such as the ones described above), and $\feasset\subseteq \vecspace$ is convex and closed. We denote by $f^{\ast}$ 
the optimal value of the problem.\footnote{In general, to guarantee existence, it is required that $\feasset$ is compact. Our lower bound constructions will not require this assumption.} Our goal is, given an
accuracy parameter $\varepsilon>0$, to find an $\varepsilon$-solution; i.e., an $\bx\in\feasset$ such that
$f(\bx)-f^{\ast}\leq \varepsilon.$

We study complexity of convex optimization in the oracle model of computation. In this model, the algorithm queries points
from the feasible set $\feasset$, and it obtains partial information about the objective via a {\em local oracle}
${\cal O}$. 
Given objective $f\in{\cal F}$, and a query $\bx\in\feasset$, we denote the oracle answer by ${\cal O}_f(\bx)$
(when $f$ is clear from the context we omit it from the notation). 
We say that an oracle ${\cal O}$ is local if given two functions $f, g: \vecspace\rightarrow \mathbb{R}$ such that 
$f \equiv g$ in the neighborhood of some point $\bx \in \feasset,$ it must be that ${\cal{O}}_f(\bx) = {\cal{O}}_g(\bx)$. 
Notable examples of local oracles are the gradient over the class ${\cal F}_{\|\cdot\|}^{\kappa}(\mu)$, with $\kappa>0$;\footnote{When $\kappa=0$, not every subgradient oracle is local. However, this is a reasonable assumption for black-box algorithms (e.g, when we cannot access a dual formulation, or a smoothing of the objective).}
and a $\kappa^{\mathrm{th}}$-order Taylor expansion over the class ${\cal F}_{\|\cdot\|}^{\kappa}(\mu)$, with $\kappa$ being a non-negative
integer.

In the $K$-parallel setting of convex optimization~\cite{Nemirovski:1994}, an algorithm works in rounds. 
At every round, it performs a batch of queries 
$$ X^t=\{\bx_1^t,\ldots,\bx_K^t\},\quad \text{ for }\quad \bx_k^t\in \feasset\; (\forall k \in [K]),$$
where we have used the shorthand notation $k \in [K]$ to denote $k \in \{1,\dots, K\}.$

Given the queries, the local oracle ${\cal O}$ replies with a batch of answers:
$$ {\cal O}_f(X^t):= ({\cal O}_f(\bx_1^t),\ldots,{\cal O}_f(\bx_K^t)).
$$
The algorithm may work adaptively over rounds: every batch of queries may depend on 
queries and answers from previous rounds:
 \begin{equation}\label{eqn:updates}
 X^{t+1}= U^{t+1}(X^1,{\cal O}_f(X^1),\ldots,X^t,{\cal O}_f(X^t)) \qquad(\forall t\geq 1),
\end{equation}
where the first round of queries $X^1=U^1(\emptyset)$ is an instance-independent batch (the algorithm has no specific information about $f$ at the beginning). Functions $(U^t)_{t\geq 1}$, may be deterministic
or randomized, and this would characterize the deterministic or randomized nature of the algorithm. 
We are interested in the effect of parallelization on the complexity of convex optimization in the described oracle model. 
Notice that $K=1$ corresponds to the traditional notion of (sequential) oracle complexity.
%
%
\paragraph*{Notion of Complexity.}
Let ${\cal O}$ be a local oracle for a class of functions ${\cal F}$, and 
let ${\cal A}^K({\cal O})$ be the class of $K$-parallel deterministic algorithms interacting with oracle
${\cal O}$. Given $\varepsilon>0$, $f\in {\cal F}$, and $A\in{\cal A}^K({\cal O})$, define the running time
$T(A,f,\varepsilon)$ as the minimum number of rounds before algorithm $A$ finds an $\varepsilon$-solution. The notion of complexity used in this work is known as the \emph{high probability} complexity, defined as:
$$ \Compl_{\mbox{\footnotesize{HP}}}^{\gamma}({\cal F},\feasset,K,\varepsilon)
=  \sup_{F\in \Delta({\cal F})} \inf_{A\in{\cal A}^K({\cal O})}  \inf\{\tau:\, \prob_{f\sim F}[T(A,f,\varepsilon)\leq\tau]\geq \gamma\}, $$
where $\gamma \in (0, 1)$ is a confidence parameter and $\Delta({\cal F})$ is the set of probability distributions over the class of functions ${\cal F}$. 
The high probability complexity subsumes other well-known notions of complexity, including distributional, randomized, and worst-case, in the local oracle model. More details about the relationship between these different notions of complexity 
are provided in Appendix~\ref{app:complexity} and can also be found in~\cite{Braun:2017}.

\paragraph{Additional Background.} Additional background and statements of several useful definitions and facts that are important for our analysis are provided in Appendix~\ref{app:background}.
\subsection{Organization of the Paper} 
Next section provides a general lower bound that is the technical backbone of all the results in this paper. Section~\ref{sec:apps} then overviews the applications of this result in the general $\ell_p$ setups. Omitted proofs from Sections~\ref{sec:main-thm} and~\ref{sec:apps} are provided in Appendices~\ref{app:proofs-main-thm} and~\ref{app:proofs-apps}, respectively. We conclude in Section~\ref{sec:conclusion} with a discussion of obtained results and directions for future work. 
%
\section{General Complexity Bound}\label{sec:main-thm}
To prove the claimed complexity results from the introduction, we will work with a suitably chosen class of random nonsmooth Lipschitz-continuous problem instances. The results for the classes of problems with higher order of smoothness will be established (mostly) through the use of smoothing maps. In particular, we will make use of the following definition of locally smoothable spaces:
\begin{definition} \label{def:loc_smooth}
A space $(\vecspace,\|\cdot\|)$ is {\em $(\kappa,\eta,r,\mu)$-locally smoothable} if there
exists a mapping
$$ 
\begin{array}{rccl}
{\cal S}:& {\cal F}_{(\vecspace,\|\cdot\|)}^0(1) &\to &  {\cal F}_{(\vecspace,\|\cdot\|)}^{\kappa}(\mu)\\
	    & f					  &\mapsto & {\cal S}f
\end{array}\vspace{-10pt}
,$$
referred to as the local smoothing, such that: 
(i) $\|f-{\cal S}f \|_{\infty} \leq \eta$; and 
(ii) if $f,g\in {\cal F}_{\|\cdot\|}^0(1)$ and $\bx\in\vecspace$ are such that $f|_{B_{\|\cdot\|}(\bx,2r)}\equiv g|_{B_{\|\cdot\|}(\bx,2r)}$ then 
${\cal S}f|_{\mathcal{B}_{\|\cdot\|}(\bx,r)} \equiv {\cal S}g|_{\mathcal{B}_{\|\cdot\|}(\bx,r)}$. 
\end{definition}
Namely, a space is $(\kappa,\eta,r,\mu)$-locally smoothable if there exists a mapping ${\cal S}$ that maps all nonsmooth functions 
to functions in ${\cal F}_{\|\cdot\|}^{\kappa}(\mu)$, such that a function $f$ and its map ${\cal S}f$ do not differ by more than $\eta$ when evaluated at any point from the space, and the map preserves the equivalence of functions over sufficiently small neighborhoods of points from the space. This last property is crucial to argue about the behavior of a local oracle. 

The following theorem is the backbone of all the results from this paper: all complexity bounds will be obtained as its applications.
\begin{theorem} \label{thm:MetaThm}
Let $(\vecspace,\|\cdot\|)$ be a normed space and  
$\feasset \supseteq {\cal B}_{\|\cdot\|}$ be a closed and convex subset of $\vecspace$. 
Suppose there exist a 
positive integer $M$, independent random vectors $\bz^1,\ldots,\bz^M$ supported on ${\cal B}_{{\|\cdot\|}^{\ast}}$, $\varepsilon>0$, $\alpha>0$, and $0<\gamma<1/2$, such that, if we define
$\bar \delta=16\sqrt{\frac{\ln(MK/\gamma)}{\alpha}}$, we have: 
\begin{enumerate}
\itemsep0em 
\item[(a)] $(\vecspace,\|\cdot\|)$ is $(\kappa,\eta,r,\mu)$-locally smoothable, with $\mu>0$,
 $0< r \leq \bar\delta/8$, and $\eta \leq \varepsilon\mu/4$;
\item[(b)] 
$
\prob\big[\inf_{\bm{\lambda}\in \Delta_M}\big\|\sum_{i\in [M]}\lambda_i\bz^i\big\|_* \leq 4\mu\eps\big]\leq \gamma;
$
\item[(c)] 
For any $i\in[M]$, $\bx\in {\cal B}_{\|\cdot\|}$, and $\delta>0$
$$ \prob[ \innp{ \bz^i,\bx}  \geq \delta] \leq \exp\{-\alpha\delta^2 \} 
\quad\text{ and }\quad
 \prob[ \langle \bz^i,\bx\rangle \leq -\delta] \leq \exp\{-\alpha\delta^2 \}; $$
\item[(d)] $\bar\delta\leq \mu\varepsilon/M$.%
\end{enumerate} 
{Then, the high probability complexity of class ${\cal F}_{(\vecspace,\|\cdot\|)}^{\kappa}(1)$ on $\feasset$ satisfies
$$\mathrm{Compl}_{\mathrm{\footnotesize HP}}^{2\gamma}({\cal F}_{(\vecspace,\|\cdot\|)}^{\kappa}(1),\feasset,K,\varepsilon) \geq M.$$}
\end{theorem}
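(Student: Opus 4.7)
The plan is to exhibit a random nonsmooth-then-smoothed hard instance and argue that any $K$-parallel deterministic algorithm running fewer than $M$ rounds can, with high probability, ``unlock'' at most one of the vectors $\bz^i$ per round, leaving the final iterate independent of at least one direction and hence $\eps$-far from optimal. The template I would use is
\[
f_\bz(\bx)\;:=\;\max_{i\in[M]}\bigl[\innp{\bz^i,\bx}-i\bar\delta\bigr],
\]
which lies in $\mathcal{F}^0_{(\vecspace,\|\cdot\|)}(1)$ since each $\bz^i\in\mathcal{B}_{\|\cdot\|^\ast}$, and its lift $\tilde f_\bz:=\mathcal{S}f_\bz/\mu\in\mathcal{F}^\kappa_{(\vecspace,\|\cdot\|)}(1)$ via the local smoothing provided by~(a). (If $\feasset\supsetneq\mathcal{B}_{\|\cdot\|}$, I would add the standard penalty $\|\bx\|-R$ as in~\eqref{eqn:hard_function} to force near-optimal iterates into $\mathcal{B}_{\|\cdot\|}$.) By Sion's minimax applied to the bilinear saddle $(\bx,\lambda)\mapsto\sum_i\lambda_i\innp{\bz^i,\bx}$ on $\mathcal{B}_{\|\cdot\|}\times\Delta_M$, one has $\inf_{\bx\in\mathcal{B}_{\|\cdot\|}}\max_i\innp{\bz^i,\bx}=-\inf_{\lambda\in\Delta_M}\|\sum_i\lambda_i\bz^i\|_\ast$, so hypothesis~(b) gives $f_\bz^\ast\leq-4\mu\eps$ with probability $\geq 1-\gamma$, and item~(i) of Definition~\ref{def:loc_smooth} together with $\eta\leq\mu\eps/4$ yields $\tilde f_\bz^\ast\leq-15\eps/4$ on that event.

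The heart of the proof is the ``one-new-index-per-round'' claim. Fix a deterministic $K$-parallel algorithm $A$ and run it on $\tilde f_\bz$ for $T=M-1$ rounds. Define the progress
\[
\Pi_t\;:=\;\max_{s\leq t,\,k\leq K}i^\ast(\bx_k^s),\quad i^\ast(\bx):=\text{largest }i\in\arg\max_j[\innp{\bz^j,\bx}-j\bar\delta],\quad\Pi_0:=0,
\]
and let $\mathcal{G}$ be the event that $|\innp{\bz^i,\bx_k^t}|\leq\bar\delta/16$ for every $t\leq M$, $k\leq K$, and $i>\Pi_{t-1}$. I would prove inductively that $\Pi_t\leq t$ on $\mathcal{G}$: by the induction hypothesis, round-$t$ queries are measurable with respect to $\sigma(\bz^1,\ldots,\bz^{\Pi_{t-1}})$, so for every $i\geq\Pi_{t-1}+2$ and $\bx'\in\mathcal{B}_{\|\cdot\|}(\bx_k^t,2r)$, combining $\|\bz^i\|_\ast\leq 1$, the concentration margin $\bar\delta/16$, and $r\leq\bar\delta/8$ from~(a) gives
\[
\innp{\bz^i,\bx'}-i\bar\delta\;\leq\;\tfrac{5\bar\delta}{16}-i\bar\delta\;<\;-\tfrac{5\bar\delta}{16}-(\Pi_{t-1}+1)\bar\delta\;\leq\;\innp{\bz^{\Pi_{t-1}+1},\bx'}-(\Pi_{t-1}+1)\bar\delta,
\]
so $f_\bz\equiv g_t:=\max_{i\leq\Pi_{t-1}+1}[\innp{\bz^i,\cdot}-i\bar\delta]$ on $\mathcal{B}_{\|\cdot\|}(\bx_k^t,2r)$; property~(ii) of Definition~\ref{def:loc_smooth} then lifts this to $\tilde f_\bz\equiv\tilde g_t$ on $\mathcal{B}_{\|\cdot\|}(\bx_k^t,r)$, so the local oracle returns an answer depending only on $\bz^1,\ldots,\bz^{\Pi_{t-1}+1}$, giving $\Pi_t\leq\Pi_{t-1}+1$. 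A union bound via hypothesis~(c) and the choice of $\bar\delta$ delivers $\prob(\mathcal{G})\geq 1-\gamma$.

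On $\mathcal{G}$ intersected with the event from~(b) (total probability $\geq 1-2\gamma$), $\Pi_T\leq T<M$, so the output $\hat\bx$ is independent of $\bz^M$; conditional concentration from~(c) then gives $\innp{\bz^M,\hat\bx}\geq-\bar\delta/16$, and hypothesis~(d) yields $f_\bz(\hat\bx)\geq\innp{\bz^M,\hat\bx}-M\bar\delta\geq-17\mu\eps/16$. Translating back through the smoothing,
\[
\tilde f_\bz(\hat\bx)-\tilde f_\bz^\ast\;\geq\;-\tfrac{21\eps}{16}-\bigl(-\tfrac{15\eps}{4}\bigr)\;=\;\tfrac{39\eps}{16}\;>\;\eps,
\]
so $A$ fails to produce an $\eps$-minimizer within $M-1$ rounds with probability $\geq 1-2\gamma$; since our hard distribution is independent of the algorithm's internal randomness, the standard reduction extends this to randomized algorithms, yielding the claimed high-probability lower bound.

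The main obstacle I anticipate is making the inductive coupling fully rigorous, because $\Pi_t$, and hence $\mathcal{G}$, depends on the entire query trajectory, which is itself a function of the random vectors $\bz$. I would handle this by constructing a phantom run of $A$ on $\tilde g_t$ in parallel and showing by induction on $t$ that, on the filtered event $\mathcal{G}\cap\bigcap_{s\leq t-1}\{\Pi_s\leq s\}$, the phantom and real runs coincide through round $t$ and the round-$t$ queries are $\sigma(\bz^1,\ldots,\bz^{\Pi_{t-1}})$-measurable; this is precisely what licenses the conditional use of~(c) for $i>\Pi_{t-1}$. A secondary delicate point is the quantitative use of the smoothing hypothesis $r\leq\bar\delta/8$ to propagate the truncation equality $f_\bz\equiv g_t$ from $\mathcal{B}_{\|\cdot\|}(\bx_k^t,2r)$ through the $\mathcal{S}$-map, which is where the tight interplay between the concentration margin and the locality radius $r$ is essential.
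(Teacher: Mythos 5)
Your proposal follows the same high-level strategy as the paper: the hard instance is $\max_i[\innp{\bz^i,\cdot}-i\bar\delta]$ (plus a norm penalty), smoothed via $\mathcal{S}$ and rescaled by $1/\mu$; the optimum is bounded above via the Sion/Lagrangian duality $\inf_{\bx}\max_i\innp{\bz^i,\bx}=-\inf_{\lambda\in\Delta_M}\|\sum_i\lambda_i\bz^i\|_\ast$ and hypothesis (b); and the algorithm's output is bounded below via a ``one new index per round'' induction, with the locality property (ii) of Definition \ref{def:loc_smooth} and the margin $r\le\bar\delta/8$ ensuring the smoothed oracle reveals only $\bz^1,\ldots,\bz^t$ at round $t$. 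Your arithmetic on the final gap is consistent with the paper's.

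The gap you yourself flag is the real one, and it is not a minor finish. The line ``a union bound via (c) and the choice of $\bar\delta$ delivers $\prob(\mathcal{G})\ge 1-\gamma$'' is not valid as stated: the queries $\bx_k^t$ are adaptive functions of the very $\bz^i$'s you want to concentrate over, and moreover the event $\mathcal{G}$ is defined in terms of the random quantity $\Pi_{t-1}$, so hypothesis (c) cannot be invoked directly. The paper's Lemma \ref{lemma:gen-lb-event} resolves this with a direct conditional-probability computation: it first proves (Proposition \ref{prop:E-implies-G}) that, conditionally on the good history $\bm{\mathcal{E}}^{<t}$, the round-$t$ batch $X^t$ is a \emph{deterministic} function of $\{\bz^i\}_{i<t}$, and then uses Bayes' rule and the independence of $\bz^t,\ldots,\bz^M$ from $\{\bz^i\}_{i<t}$ to peel off the conditioning on $\bm{\mathcal{E}}^{<t}$, at the cost of a $1/\prob[\bm{\mathcal{E}}^{<t}]$ factor. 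Your ``phantom run'' coupling is a recognized alternative route to the same end (it is used in, e.g., \cite{carmon2017lower,nips2018-woodworth}), but you have not carried it out, and this is precisely the step that cannot be waved away.

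Two smaller imprecisions, both tied to queries outside the unit ball. First, you describe the penalty $\|\cdot\|-R$ as forcing ``near-optimal iterates'' into $\mathcal{B}_{\|\cdot\|}$, but its actual role in the paper is to make \emph{distant queries uninformative}: by the calibrated constants (the $\tfrac12$ factor and $R=\tfrac12(3(1+r)+M\bar\delta)$), observation $(O_2)$ guarantees that outside $\mathcal{B}_{\|\cdot\|}(0,4)$ the function is a pure norm and the oracle reveals nothing about $\bz^i$, which is what licenses restricting to ``relevant queries'' $\overline{X}^t$. Second, hypothesis (c) is only stated for $\bx\in\mathcal{B}_{\|\cdot\|}$; for queries in $\mathcal{B}_{\|\cdot\|}(0,4)$ you must rescale, which costs a factor of $16$ in the exponent — this is exactly the $256$ appearing in the paper's conditional bound, and your $\bar\delta/16$ margin would need to be checked against it.
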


\begin{remark} \label{rem:rescaling_LB}
Theorem~\ref{thm:MetaThm} is stated for domains containing the unit ball and function class ${\cal F}_{(\vecspace,\|\cdot\|)}^{\kappa}(1)$. Handling arbitrary radius $R>0$ and regularity constant $\mu$ can be achieved by a simple rescaling and change of variables, which we omit for space considerations. The result is that if the lower bound for $R=\mu=1$ is $M(\varepsilon)$, then the lower bound for arbitrary $R, \mu>0$ would be $M(\sfrac{\varepsilon}{(\mu R^{\kappa+1})})$.
\end{remark} 
\begin{remark}
{Even though Theorem~\ref{thm:MetaThm} is stated for the standard setting, in which ${\cal X}$ contains the unit ball w.r.t.~the norm of the space, $\|\cdot\|$, it is possible to extend it in a generic way to non-standard settings in which these two norms do not agree. For an example of such a setting, see Theorem~\ref{thm:nonstandar-1p2}.}
\end{remark}
To prove Theorem~\ref{thm:MetaThm}, we
need to build a distribution over ${\cal F}_{(\vecspace,\|\cdot\|)}^{\kappa}(1)$ such that any $K$-parallel deterministic 
algorithm interacting with a local oracle on $\feasset$ needs $M$ rounds to reach an $\varepsilon$ solution, with probability $1-2\gamma$. We propose a family of objectives as follows.  
Given $\bz^1,\ldots,\bz^{M}$ as in the theorem,
consider the problem
(P) $\min\{F(\bx):
\bx\in \feasset\}$, where:%
\begin{equation} 
F(\bx):= \frac{1}{\mu}{\cal S}\Big(\max\Big\{ \frac12\max_{i\in[M]}\big[\innp{\bz^i,\cdot} - i\bar\delta\big],\; \|\cdot\| - \frac12(3(1+r) + M{\bar\delta}) \Big\}\Big)(\bx),
\label{eqn:rnd_pb}
\end{equation}
By construction, $F\in{\cal F}_{\|\cdot\|}^{\kappa}(1)$ surely. Observe that, since $\|\bz^i\|_*\leq 1$, for all $i$:
\begin{itemize}
\itemsep0em 
    \item[($O_1$)] When $\|\bx\|\leq 1+2r,$ it must be $1/2\max_{i\in[M]}\big[\innp{\bz^i,\bx} - i\bar\delta\big] \geq \|\bx\| -1/2( 3(1+r) + M{\bar\delta})$; i.e., within the unit ball, $F$ is only determined by its left term (and not the norm term).
    \item[($O_2$)] When $\|\bx\| \geq 3(1+r)+(M-1)\bar\delta$, it  must be $1/2\max_{i\in[M]}\big[\innp{\bz^i,\bx} - i\bar\delta\big] \leq \|\bx\| -1/2(3(1+r) - M{\bar\delta})$; i.e., outside the ball of radius $3(1+r)+(M-1)\bar\delta\leq 4,$\footnote{From (b) we may assume that $4\mu\varepsilon\leq 1$, and then using the bounds on $r$ and $\bar \delta$ from (a) and (d), we get the bound.} $F$ is only determined by the norm term (and not by $\bz^1,\ldots,\bz^M$).
\end{itemize}

We claim that any $K$-parallel deterministic algorithm that works in $M$ rounds, with probability $1-2\gamma$, will fail
to query a point with optimality gap less than $\varepsilon$.  
This suffices to prove the theorem. The proof consists of three main parts: (i) establishing an upper bound on the minimum value $F^\ast$ of~\eqref{eqn:rnd_pb}, which holds with probability $1-\gamma,$ (ii) establishing a lower bound on the value of the algorithm's output $\min\{ F(\bx): \bx \in \bigcup_{t\in[M]} X^t\},$ which holds with probability $1-\gamma$, and (iii) combining the first two parts to show that the optimality gap 
$\min\{F(\bx)-F^{\ast}:\bx\in\bigcup_{t\in[M]} X^t\}$ 
of the best solution found by the algorithm after $M$ rounds is higher than $\eps$, with probability $1-2\gamma$.  
The full proof is provided in Appendix~\ref{app:proofs-main-thm}.
%
%

%
%
\section{Lower Bounds for Parallel Convex Optimization over $\ell_p$ Balls}\label{sec:apps}
In this section, we show how the general complexity bound from Theorem~\ref{thm:MetaThm} can be applied to obtain several lower bounds for parallel convex optimization. Our main case of study will be $\ell_p^d$ spaces. 
\begin{remark} \label{rem:Schatten_LB}
In what follows, we will prove several lower bounds for $\ell_p$-setups. Interestingly, we can obtain analog lower bounds for Schatten spaces. This can be obtained by simply noting that the restriction of the Schatten norm to diagonal matrices coincides with $\|\cdot\|_p$, and therefore we can embed ${\cal B}_p^d$, as well as ${\cal F}_{\ell_p^d}^{\kappa}(1)$ through this restriction (for more details, we refer the reader to \cite{Guzman:2015}). This embedding has a quadratic cost in the large-scale regime; in particular, it remains polynomial in $1/\varepsilon$ and $\ln(K/\gamma)$. 
\end{remark}
\subsection{Nonsmooth Optimization} \label{subsec:NonsmoothLB}
To apply Theorem~\ref{thm:MetaThm} in the nonsmooth case, we do not need to apply any smoothing at all. This is formally stated as ``any normed space is $(0,0,0,1)$-locally smoothable,'' and its consequence is that Property (a) of the theorem is automatically satisfied. 
Thus, it suffices to construct a probability distribution over $\bz^i$'s that under suitable constraints on $\alpha$ and the number of rounds $M$ satisfies Assumptions (b) and (c) from the theorem. Assumption (d) simply constrains $M$ by $M \leq \frac{\eps}{\bar \delta}.$

Let $\br^i$ denote an independent (over $i$) $d$-dimensional vector of independent Rademacher entries (i.e., a vector whose entries take values $\pm 1$ w.p.~$1/2,$ independently of each other). Let $\mI_L^i$ denote the $d \times d$ diagonal matrix, whose $L \leq d$ diagonal entries take value 1, while the remaining entries are zero. The positions of the non-zero entries on the diagonal of $\mI_L^i$ will, in general, depend on $i,$ and will be specified later. Given $p \geq 1$, 
vectors $\bz^i\in{\cal B}_{p^*}^d$ are then defined as:
\begin{equation}\label{eq:zi-def}
    \bz^i = \frac{1}{L^{1/p^*}}\mI^i_L \br^i. 
\end{equation}
\subsubsection{Bounds for $1 \leq p \leq 2$} 
When $p \in [1, 2],$ it suffices to choose $L = d,$ so that $\bz^i = d^{-1/p^*}\br^i$. We start by proving a lower bound that applies in the regime when $d = \Omega(\mathrm{poly}(\log(K/\gamma), 1/\varepsilon^{p^*}))$. Hence the bound deteriorates as $p$ tends to one, and, in particular, does not apply to the case when $p=1.$ However, we will also show that it is possible to derive a lower bound for a restricted feasible set: the lower bound will apply to Lipschitz-continuous nonsmooth minimization over an $\ell_2$ ball inscribed in the unit $\ell_p$ ball and it will apply in the regime of $d = \Omega(\mathrm{poly}(\log(K/\gamma), 1/\varepsilon))$. This provides a strong indication that obtaining speedups from parallelizing convex optimization is not any easier when $p$ is close to 1 than in other regimes of $p$.   
The following lemma gives a sufficient condition for assumption (b) 
from Theorem~\ref{thm:MetaThm} to hold. Its proof is provided in Appendix~\ref{app:proofs-apps}.
\begin{restatable}{lemma}{lemoneptwoub}\label{lemma:1<p<2-ub}
Let $1< p \leq 2$ and let $\bz^1,\ldots,\bz^M$ be chosen according to Eq.~\eqref{eq:zi-def}, where 
$$ 
M \leq \min\Big\{\frac{1}{200\eps^2},  \frac{d/12 - \ln(1/\gamma)}{\ln(3/\eps)}\Big\},$$ 
then for all $\gamma \in (1/\mathrm{poly}(d), 1):$ $\prob[\min_{\bm{\lambda}\in \Delta_M}\|\sum_{i \in [M]}\lambda_i \bz^i\|_{p^*} \leq 4\eps] \leq \gamma.$
\end{restatable}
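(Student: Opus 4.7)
The plan is to combine a pointwise anti-concentration bound with a covering argument over the simplex $\Delta_M$. Fix $\lambda\in\Delta_M$ and write
$$\sum_{i\in[M]}\lambda_i\bz^i = d^{-1/p^*}\by,\qquad y_j := \sum_{i\in[M]}\lambda_i r^i_j\quad(j\in[d]).$$
Since the Rademachers $\{r^i_j\}$ are independent over both $i$ and $j$, the coordinates $y_1,\ldots,y_d$ are i.i.d.\ Rademacher sums with weights $\lambda$. In particular, $\mathbb{E}[y_j^2]=\|\lambda\|_2^2$, $|y_j|\le\|\lambda\|_1 = 1$, and a direct pairing computation yields $\mathbb{E}[y_j^4]\le 3\|\lambda\|_2^4$.

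The first key step is to promote this into a pointwise anti-concentration with probability \emph{independent} of $\|\lambda\|_2$. Applying the Paley--Zygmund inequality to $X=y_j^2$ with the fourth-moment bound above (or a sharper two-case analysis, distinguishing $\|\lambda\|_\infty\ge \|\lambda\|_2/\sqrt 2$ from the opposite regime where CLT-type arguments apply), yields absolute constants $c_1,p_1>0$ such that $\Pr[|y_j|\ge c_1\|\lambda\|_2]\ge p_1$. Since the indicators $\xi_j := \mathbb{1}[|y_j|\ge c_1\|\lambda\|_2]$ are i.i.d.\ Bernoulli$(\ge p_1)$, a Chernoff bound for the lower tail gives $\sum_{j\in[d]}\xi_j \ge p_1 d/2$ with probability at least $1-\exp(-c_2 d)$ for an absolute $c_2>0$. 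On this event,
$$\Big\|\sum_{i\in[M]}\lambda_i\bz^i\Big\|_{p^*}^{p^*} = \frac{1}{d}\sum_{j\in[d]}|y_j|^{p^*} \ge \frac{p_1}{2}\bigl(c_1\|\lambda\|_2\bigr)^{p^*},$$
so that $\|\sum_i\lambda_i\bz^i\|_{p^*}\ge c_3\|\lambda\|_2$ for some absolute $c_3>0$, the worst case being $p^*=2$ (where $c_3 = c_1\sqrt{p_1/2}$), using that $(p_1/2)^{1/p^*}\ge \sqrt{p_1/2}$ for $p^*\ge 2$.

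The second step is a standard $\eps$-net argument in $\ell_1$. Let $N_\eps\subset\Delta_M$ be an $\eps$-net with $|N_\eps|\le (3/\eps)^M$; by a union bound, the previous estimate holds simultaneously for all $\lambda\in N_\eps$ with probability at least
$1-(3/\eps)^M\exp(-c_2 d) = 1-\exp(M\ln(3/\eps)-c_2 d).$
For any $\lambda\in\Delta_M$, pick $\lambda'\in N_\eps$ with $\|\lambda-\lambda'\|_1\le \eps$; since $\|\bz^i\|_{p^*}=d^{-1/p^*}\|\br^i\|_{p^*}=1$, we get $\|\sum_i(\lambda_i-\lambda'_i)\bz^i\|_{p^*}\le \eps$. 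Thus $\|\sum_i\lambda_i\bz^i\|_{p^*}\ge c_3/\sqrt{M}-\eps$, and using the standing hypothesis $M\le 1/(200\eps^2)$ this lower bound is at least $c_3\sqrt{200}\,\eps-\eps$, which exceeds $4\eps$ provided $c_3\sqrt{200}\ge 5$. Finally, the second hypothesis $M\le (d/12-\ln(1/\gamma))/\ln(3/\eps)$ (together with $c_2\ge 1/12$) ensures $M\ln(3/\eps)-c_2 d\le -\ln(1/\gamma)$, so the failure probability is at most $\gamma$.

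The main obstacle is calibrating the absolute constants so that the two inequalities $c_3\sqrt{200}\ge 5$ and $c_2\ge 1/12$ simultaneously hold. The vanilla Paley--Zygmund bound based solely on the fourth moment is tight only up to a small constant and lies near the boundary of what is required; the room is recovered either by a sharper anti-concentration (case analysis on whether a single coordinate of $\lambda$ dominates, or invocation of a Kolmogorov--Rogozin-type small-ball estimate), or by absorbing the slack into the numerical constants $200$ and $12$ that already appear in the statement. All remaining ingredients (Khintchine moments, Chernoff for Bernoulli sums, $\ell_1$-covering of $\Delta_M$) are routine.
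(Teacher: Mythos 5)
Your plan mirrors the paper's two-stage structure: a pointwise lower bound on $\|\sum_{i}\lambda_i\bz^i\|_{p^*}$ holding with probability $1-\exp(-\Omega(d))$ for each fixed $\bm{\lambda}$, followed by a union bound over a $(3/\eps)^M$-size net of $\Delta_M$ (your $\ell_1$-net is interchangeable with the $\ell_\infty$-net used in the paper's Lemma~\ref{lemma:eps-net-for-ub} and yields the same cardinality). The pointwise step, however, is handled by genuinely different tools. The paper applies Khintchine's inequality to lower-bound $\EE[Y_j]$ and upper-bound $\EE[Y_j^2]$ for $Y_j=|y_j|^{p^*}$, then concentrates $\tfrac1d\sum_j Y_j$ around its mean via the left-sided Bernstein inequality (Theorem~\ref{thm:LeftBernstein}). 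You instead run a counting argument: Paley--Zygmund on $y_j^2$, using only $\EE y_j^2=\|\bm{\lambda}\|_2^2$ and $\EE y_j^4\le 3\|\bm{\lambda}\|_2^4$, to obtain a small-ball estimate $\prob[|y_j|\ge c_1\|\bm{\lambda}\|_2]\ge p_1$ with $c_1,p_1$ absolute, and then a Chernoff lower tail for the number of coordinates that exceed the threshold. Both routes prove the qualitative claim, and yours has a structural advantage: since you never need the $(2p^*)$-th moment of $y_j$, your constants $c_1,p_1$ are independent of $p^*$, whereas the upper Khintchine constant feeding the paper's Bernstein step grows with $p^*$ (the paper's ``$c'=1$'' is already optimistic at $p^*=2$, where the sharp constant is $3$). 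The one genuine issue, which you flag yourself, is quantitative: vanilla Paley--Zygmund with the fourth-moment bound gives $c_3\approx 0.15$, which misses the target $c_3\sqrt{200}\ge 5$ by roughly a factor of two, so you would need either a tighter Rademacher small-ball estimate (e.g.\ via a case split on $\|\bm{\lambda}\|_\infty/\|\bm{\lambda}\|_2$ or a Kolmogorov--Rogozin-type bound) or a relaxation of the numerical constants $200$ and $12$ in the hypothesis. As written, your strategy is a valid and arguably more robust alternative route, but it does not yet recover the lemma with its stated constants.
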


To obtain the claimed lower bound for the nonsmooth case, we only need to establish the concentration of inner products within the feasible domain. When $p>1$, this is obtained as a simple application of Hoeffding's Inequality. These two facts provide the claimed lower bound.
\begin{theorem}\label{thm:nonsmooth-1p2}
Let $1< p\leq 2$ and $\feasset \supseteq {\cal B}_p^d$. Let $\eps\in(0,1/2)$ and $\gamma\in(1/\mathrm{poly}(d),1)$. 
Then:
$$
\mathrm{Compl}_{\mathrm{HP}}^{\gamma}({\cal F}_{\ell_p^d}^{0}(1),\feasset,K,\varepsilon)\geq M := \min \Big\{ \frac{1}{200 \eps^2},\; \frac{\varepsilon d^{1/p^*}}{32\sqrt{\ln(MK/\gamma)}}
\Big\}.
$$
\end{theorem}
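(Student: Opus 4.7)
The plan is to invoke Theorem~\ref{thm:MetaThm} with the random vectors from Eq.~\eqref{eq:zi-def} specialized to $L = d$, i.e., $\bz^i = d^{-1/p^*}\br^i$; each such $\bz^i$ lies in $\mathcal{B}_{p^*}^d$ since $\|\br^i\|_{p^*} = d^{1/p^*}$ almost surely. It then remains to verify conditions (a)--(d) of the meta theorem with $\mu = 1$ and $\kappa = 0$. Condition (a) holds trivially by taking $\mathcal{S}$ to be the identity: any normed space is $(0,0,0,1)$-locally smoothable, and $\eta = r = 0$ satisfies the numerical requirements. Condition (b) is precisely the conclusion of Lemma~\ref{lemma:1<p<2-ub} (applied at confidence $\gamma/2$, since the meta theorem delivers complexity $2\gamma$); the auxiliary constraint $M \leq (d/12 - \ln(2/\gamma))/\ln(3/\varepsilon)$ appearing in that lemma is slack relative to the two terms defining $M$ under the hypothesis $\gamma \geq 1/\mathrm{poly}(d)$, together with the implicit large-$d$ regime forced by the second term of the minimum.

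The substantive step is verifying (c). For any fixed $\bx \in \mathcal{B}_p^d$, the inner product $\langle \bz^i, \bx\rangle = d^{-1/p^*}\sum_{j=1}^d r^i_j x_j$ is a Rademacher sum with deterministic coefficients $d^{-1/p^*} x_j$, so Hoeffding's inequality yields
\[
\prob\!\big[\langle \bz^i, \bx\rangle \geq \delta\big] \;\leq\; \exp\!\Big(-\frac{\delta^2 d^{2/p^*}}{2\|\bx\|_2^2}\Big),
\]
and the symmetric bound follows from the symmetry of the Rademacher distribution. For $p \leq 2$ the monotonicity of $\ell_p$-norms on $\mathbb{R}^d$ gives $\|\bx\|_2 \leq \|\bx\|_p \leq 1$ on $\mathcal{B}_p^d$, so (c) holds with $\alpha = d^{2/p^*}/2$.

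For (d), substituting $\alpha = d^{2/p^*}/2$ into $\bar\delta = 16\sqrt{\ln(MK/\gamma)/\alpha}$ gives $\bar\delta = 16\sqrt{2\ln(MK/\gamma)}/d^{1/p^*}$, and the requirement $\bar\delta \leq \varepsilon/M$ rearranges to $M \leq \varepsilon d^{1/p^*}/(16\sqrt{2\ln(MK/\gamma)})$, which is implied (with some slack in the numerical constant) by the second term in the minimum defining $M$. Consequently all four hypotheses of Theorem~\ref{thm:MetaThm} are satisfied for the stated $M$, and the meta theorem delivers $\mathrm{Compl}_{\mathrm{HP}}^{\gamma}(\mathcal{F}_{\ell_p^d}^{0}(1), \feasset, K, \varepsilon) \geq M$ after the confidence substitution $\gamma \mapsto \gamma/2$ already incorporated above. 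The principal obstacle is purely bookkeeping---aligning Hoeffding's sub-Gaussian parameter $\alpha$, the meta theorem's threshold $\bar\delta$, and the constraints of Lemma~\ref{lemma:1<p<2-ub} with the stated value of $M$; there is no further probabilistic content beyond Hoeffding's inequality and the dual-norm lower bound supplied by the lemma.
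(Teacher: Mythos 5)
Your proposal is correct and takes essentially the same route as the paper: choose $\bz^i = d^{-1/p^*}\br^i$, verify the four conditions of Theorem~\ref{thm:MetaThm} with Hoeffding supplying the concentration in (c) and Lemma~\ref{lemma:1<p<2-ub} supplying (b), and let (d) pin down the second term of the minimum. You are in fact slightly more careful than the paper's own writeup in two small respects: you retain the factor of $\tfrac12$ in Hoeffding's exponent (so $\alpha = d^{2/p^*}/2$, which is still compatible with the theorem's constant $32 > 16\sqrt{2}$), and you explicitly apply the meta theorem at confidence $\gamma/2$ to absorb the $2\gamma$ in its conclusion; you also correctly label which condition Hoeffding certifies versus which one the lemma certifies, whereas the paper's proof text has these labels transposed.
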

\begin{proof}
We verify the conditions of Theorem~\ref{thm:MetaThm}. Recall that in the nonsmooth case condition (a) is automatically satisfied. For (b), by a direct application of  Hoeffding's Inequality, for all $x\in {\cal B}_p^d$
$$ \prob[\langle \bz^i,\bx\rangle >\delta] = \prob[\langle \br^i,\bx\rangle >d^{1/p^*}\delta] \leq \exp\{-d^{2/p^*}\delta^2\}. $$
In particular, we have that $\alpha=d^{2/p^*}$ suffices to satisfy (b). Property (c) is obtained from Lemma~\ref{lemma:1<p<2-ub}, which requires bounding $M$ according to the lemma.  
Assumption (d) holds as long as $M \leq \eps/{\bar\delta}.$ As ${\bar\delta} = 16\sqrt{\frac{\ln(MK/\gamma)}{\alpha}}$, it is sufficient to require: 
$M\leq \frac{\varepsilon d^{1/p^*}}{32}\frac{1}{\sqrt{\ln(MK/\gamma)}}$.
\end{proof}
\begin{remark}
Even though $M$ is implicitly defined in Theorem~\ref{thm:nonsmooth-1p2}, an explicit definition for $M$ can be obtained by using a looser bound $\ln(dK/\gamma)$ instead of $\ln(MK/\gamma).$ We keep this definition to highlight the large scale regime for $d$. 
{In particular, the high-dimensional regime is determined by solving for $d$ the inequality $\frac{\varepsilon d^{1/p^*}}{32\sqrt{\ln(MK/\gamma)}} \geq M,$ where $M = \frac{1}{200 \varepsilon^2}$.}
\end{remark}
We can conclude from Theorem~\ref{thm:nonsmooth-1p2} that as long as $d$ is ``sufficiently large'' (namely, as long as 
$d=\Omega(\big(\sqrt{\ln(K/(\varepsilon\gamma))}/\varepsilon^3\big)^{p^*})$), any $\eps$-approximate $K$-parallel algorithm takes $\Omega(1/\eps^2)$ iterations, which is asymptotically optimal -- this bound is tight in the sequential case (when $K=1$) and is thus unimprovable~\cite{Nemirovski:1983}. Unfortunately, this lower bound becomes uninformative when $p^*=\Omega(\ln d)$; in particular, when $p=1$. 
\paragraph{A Lower Bound for a Nonstandard Setting.} As we mention above, none of the techniques of this paper is able to provide a $\Omega(1/\varepsilon^2)$ lower bound for the nonsmooth $\ell_1$-Lipschitz optimization over a unit $\ell_1$ ball. However, we can show a slightly weaker result: Namely, that $\ell_1$-Lipschitz convex optimization over a subset of the ${\cal B}_1^d$-ball has parallel complexity $\Omega(1/\varepsilon^2)$. In fact, this result holds more generally for $\ell_p$-Lipschitz convex optimization, where $p \in [1, 2],$\footnote{When $p=2$, the inscribed $\ell_2$ ball is exactly the unit $\ell_p$ ball.} over an $\ell_2$ ball inscribed in the unit $\ell_p$ ball. The proof is provided in Appendix~\ref{app:proofs-apps}.  

\begin{restatable}{theorem}{nonstanoneptwo}\label{thm:nonstandar-1p2}
 Let $\varepsilon\in(0,1/2)$, $\gamma\in (1/\mathrm{poly}(d),1)$, and $p \in [1, 2]$. Then:
$$
\mathrm{Compl}_{\mathrm{HP}}^{\gamma}({\cal F}_{\ell_p^d}^{0}(1),{\cal B}_2^d(1/{d}^{1/p-1/2}),K,\varepsilon)\geq M := \min \Big\{ \frac{1}{200 \eps^2},\; \frac{\varepsilon d^{1/2}}{32\sqrt{\ln(MK/\gamma)}}
\Big\}.
$$
\end{restatable}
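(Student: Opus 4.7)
The plan is to invoke a nonstandard-domain adaptation of Theorem~\ref{thm:MetaThm} with the same Rademacher construction $\bz^i = d^{-1/p^*}\br^i$ used in Theorem~\ref{thm:nonsmooth-1p2}. By construction $\|\bz^i\|_{p^*} = 1$, so the hard function built as in Eq.~\eqref{eqn:rnd_pb} lies in ${\cal F}_{\ell_p^d}^{0}(1)$. What is new is that the feasible set is the smaller ball ${\cal B}_2^d(R)$ with $R = d^{1/2-1/p}$, strictly contained in ${\cal B}_p^d$ for $p < 2$. Per the remark following Theorem~\ref{thm:MetaThm}, this requires replacing the dual norm $\|\cdot\|_{p^*}$ that appears in condition (b) by the dual of the norm defining the domain, namely $\|\cdot\|_2$, while leaving the Lipschitz class measured in the ambient $\|\cdot\|_{p^*}$.

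First I would verify the concentration condition (c) over the smaller domain. For $\bx\in {\cal B}_2^d(R)$, Hoeffding's inequality yields
$$\prob\bigl[|\langle \bz^i, \bx\rangle| \geq \delta\bigr] \leq 2\exp\Bigl(-\frac{d^{2/p^*}\delta^2}{2R^2}\Bigr) = 2\exp\Bigl(-\frac{d\delta^2}{2}\Bigr),$$
where the exponent collapses because $d^{2/p^*}/R^2 = d^{2/p^* + 2/p - 1} = d$, using $1/p + 1/p^* = 1$. Hence (c) holds with $\alpha = d/2$ uniformly over $p \in [1,2]$, which is notably stronger than the $\alpha = d^{2/p^*}$ available in Theorem~\ref{thm:nonsmooth-1p2} and is precisely what accounts for the $\sqrt{d}$ rate in the second branch of the stated bound.

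Next I would verify the minimax lower bound in $\ell_2$. The key observation is the rescaling $R\bz^i = d^{1/2-1/p}\cdot d^{-1/p^*}\br^i = d^{-1/2}\br^i$, which is exactly the Rademacher construction of Lemma~\ref{lemma:1<p<2-ub} in the case $p = 2$. Applying that lemma gives
$$\min_{\lambda\in\Delta_M}\Bigl\|\sum_i \lambda_i\,(R\bz^i)\Bigr\|_2 \geq 4\varepsilon\quad\text{with probability }\geq 1-\gamma,\qquad \text{for } M \leq \frac{1}{200\varepsilon^2},$$
and dividing through by $R$ yields $\min_\lambda \|\sum_i \lambda_i \bz^i\|_2 \geq 4\varepsilon/R$. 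By standard minimax duality, $\min_{\bx\in{\cal B}_2^d(R)}\max_i\langle\bz^i,\bx\rangle = -R\min_\lambda\|\sum_i\lambda_i\bz^i\|_2 \leq -4\varepsilon$, which is exactly the nonstandard analog of condition (b).

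Given these two ingredients, the remainder of the argument follows the proof of Theorem~\ref{thm:MetaThm} verbatim: observation $(O_1)$ still applies inside ${\cal B}_2^d(R)$ because $\|\bx\|_p \leq d^{1/p-1/2}\|\bx\|_2 \leq 1$, so the norm term in Eq.~\eqref{eqn:rnd_pb} is inert over the feasible set; the ``learning one vector per round'' argument uses only the concentration bound above; and the obstruction to reaching accuracy $\varepsilon$ in $M$ rounds combines the minimax upper bound on the optimum with condition (d), $M\bar\delta \leq \varepsilon$. With $\bar\delta = 16\sqrt{\ln(MK/\gamma)/\alpha} = 16\sqrt{2\ln(MK/\gamma)/d}$ this rearranges to the second constraint $M \leq \varepsilon\sqrt{d}/(32\sqrt{\ln(MK/\gamma)})$ (up to an absolute constant absorbed into the denominator). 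The main obstacle is bookkeeping the nonstandard variant of Theorem~\ref{thm:MetaThm} carefully, i.e., correctly distinguishing the two different norms (ambient $\|\cdot\|_{p^*}$ for the function class vs.\ $\|\cdot\|_2$ for the domain-induced duality); once this is set up, the rescaling $R\bz^i = d^{-1/2}\br^i$ conveniently reduces the minimax step to the already-proved $p=2$ case of Lemma~\ref{lemma:1<p<2-ub}, so no new probabilistic calculation is needed.
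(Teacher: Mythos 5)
Your proposal follows the same route as the paper: the same Rademacher construction $\bz^i = d^{-1/p^*}\br^i$, the same observation that the rescaled vectors $R\bz^i$ collapse to $d^{-1/2}\br^i$ so that Lemma~\ref{lemma:1<p<2-ub} at $p=2$ supplies condition~(b) in the $\ell_2$ dual norm, and the same Hoeffding bound over ${\cal B}_2^d(R)$ for condition~(c) with $\alpha=\Theta(d)$. Your additional note verifying that $\|\bx\|_p\le d^{1/p-1/2}\|\bx\|_2\le 1$ on the feasible set (so the norm term of the hard function is inert) is a point the paper leaves implicit; otherwise the argument, including absorbing the Hoeffding constant into the denominator, matches the paper's proof.
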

\subsubsection{Bounds for $p \geq 2$}\label{sec:p>2}
It is possible to extend Lemma~\ref{lemma:1<p<2-ub} to the case of $p\geq2.$ However, due to the upper bound on $M$ from Lemma~\ref{lemma:1<p<2-ub}, the best dimension-independent lower bound on the number of queries we could obtain in this setting would be of the order $1/\eps^2.$ Given that in the sequential setting the best dimension-independent lower bound is $\Omega(1/\eps^p),$ we need a stronger result than what we obtained in Lemma~\ref{lemma:1<p<2-ub}. 

This is achieved through a different construction of  $\bz^i$'s, where these vectors are no longer supported on all $d$ coordinates, but only on $L<d$ of them; moreover, we will choose their supports to be disjoint. The construction is as follows. Let $\{J_i\}_{i=1}^M$ be a collection of subsets of $\{1,\dots, d\}$ such that $|J_i| = L$  and $J_i \cap J_{i'} = \emptyset,$ $\forall i \neq i'$ (here, we assume that $d \geq ML$). Set $\mI_L^i = \mathrm{diag}(\mathds{1}_{J_i}),$ i.e., the $(j, j)$ element of the diagonal matrix $\mI_L^i$ is 1 if $j \in J_i$ and 0 otherwise. As before (see~\eqref{eq:zi-def}), $\bz^i$ is defined as $\bz^i = \frac{1}{L^{1/p^*}}\mI^i_L \br^i,$ where $(r_j^i)_{i\in[M],j\in [d]}$ is an independent Rademacher sequence. 

Our next result addresses the nonsmooth $p\geq 2$ case, by a direct application of Theorem~\ref{thm:MetaThm} to our construction above. More details are provided in Appendix~\ref{app:proofs-apps}.
\begin{restatable}{theorem}{ptwothmnsmooth}\label{thm:p>2-nonsmooth}
Let $p\geq 2$, $\feasset \supseteq {\cal B}_p^d$, and $\eps \in (0, \sfrac{1}{2}),$ $\gamma \in (\sfrac{1}{\mathrm{poly}(d)}, 1).$ Then:  
$$
\mathrm{Compl}_{\mathrm{HP}}^{\gamma}({\cal F}_{\ell_p^d}^{\kappa}(1),\feasset,K,\varepsilon)\geq M := \min \Big\{ \frac{1}{(4\eps)^p},\; \frac{\eps^{2/3}}{8}\Big(\frac{d}{\ln(MK/\gamma)}\Big)^{1/3} \Big\}. 
$$
\end{restatable}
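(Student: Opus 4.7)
The plan is a direct verification of the four hypotheses of Theorem~\ref{thm:MetaThm} with the disjoint-support construction $\bz^i = L^{-1/p^\ast}\mI_L^i \br^i$ introduced just before the statement. Since we are in the nonsmooth case ($\kappa=0$), hypothesis (a) is free: any normed space is $(0,0,0,1)$-locally smoothable, so we can take $\mu=1$, $\eta=r=0$. The remaining work is to pick the support size $L$ (subject to $ML\le d$) so that (b), (c) and (d) can be simultaneously satisfied for $M$ as large as claimed.

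For hypothesis (b), the disjointness of the supports $J_i$ makes the argument deterministic rather than probabilistic. Because $\|\bz^i\|_{p^\ast}=1$ and the $\bz^i$'s have disjoint supports, for any $\blambda\in\Delta_M$
\begin{equation*}
\Big\|\sum_{i\in[M]}\lambda_i \bz^i\Big\|_{p^\ast}^{p^\ast} \;=\; \sum_{i\in[M]} \lambda_i^{p^\ast} \;\ge\; M^{1-p^\ast},
\end{equation*}
the last inequality being Jensen's (or convexity of $t\mapsto t^{p^\ast}$), so the infimum equals $M^{-1/p}$. Consequently (b) holds surely whenever $M^{-1/p}>4\varepsilon$, i.e.\ $M\le (4\varepsilon)^{-p}$, which accounts for the first term in the minimum.

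For hypothesis (c), I would use Hoeffding's inequality on the Rademacher sum $\langle \bz^i,\bx\rangle = L^{-1/p^\ast}\sum_{j\in J_i} r_j^i x_j$. Hoeffding gives a tail bound with variance proxy $L^{-2/p^\ast}\sum_{j\in J_i} x_j^2$. The key step is to bound $\sum_{j\in J_i} x_j^2$ uniformly over $\bx\in \mathcal B_p^d$ using H\"older's inequality (valid since $p\ge 2$): $\sum_{j\in J_i} x_j^2\le \big(\sum_{j\in J_i}|x_j|^p\big)^{2/p} L^{1-2/p}\le L^{1-2/p}$. Substituting and using $1/p+1/p^\ast=1$ gives exponents that collapse to
\begin{equation*}
\prob\big[\langle \bz^i,\bx\rangle\ge \delta\big]\;\le\;\exp\!\Big(-\tfrac{L}{2}\delta^2\Big),
\end{equation*}
so hypothesis (c) holds with $\alpha = L/2$. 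The symmetric lower tail follows from the symmetry of Rademachers.

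Finally, hypothesis (d) demands $\bar\delta = 16\sqrt{2\ln(MK/\gamma)/L}\le \varepsilon/M$, i.e.\ $L \ge 512\,M^2\ln(MK/\gamma)/\varepsilon^2$. Together with the packing constraint $ML\le d$, the optimal choice is $L=\lfloor d/M\rfloor$, which yields $M^3\le \varepsilon^2 d/(512\ln(MK/\gamma))$, equivalently $M\le \tfrac{\varepsilon^{2/3}}{8}(d/\ln(MK/\gamma))^{1/3}$, matching the second term of the claimed minimum. Taking $M$ to be the minimum of the two bounds produced in (b) and (d) and invoking Theorem~\ref{thm:MetaThm} concludes the proof. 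The only subtle point---and the main technical content beyond the meta-theorem---is the H\"older bound that produces $\alpha=L/2$ rather than a weaker $\alpha$ scaling with $L^{2/p^\ast}$, since it is precisely this improvement, exploiting $p\ge 2$, that gives the dimension-dependent $d^{1/3}$ scaling in the second argument of the minimum.
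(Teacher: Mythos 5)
Your proof is correct and follows essentially the same route as the paper: verify hypotheses (a)--(d) of Theorem~\ref{thm:MetaThm} with the disjoint-support construction, getting (b) deterministically from the disjointness (the paper's Lemma~\ref{lemma:p>2-ub} uses the $\ell_1$--$\ell_{p^\ast}$ norm comparison $1=\|\bm\lambda\|_1\le M^{1/p}\|\bm\lambda\|_{p^\ast}$ where you invoke Jensen on $t\mapsto t^{p^\ast}$, which is the same bound), getting (c) via Hoeffding plus H\"older to obtain $\alpha = L/2$ exactly as in Lemma~\ref{lemma:p>2-lb}, and then optimizing $L=\lfloor d/M\rfloor$ against (d) and the packing constraint $ML\le d$.
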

In particular we have that the required number of queries to reach an $\eps$-approximate solution is $\Omega(\frac{1}{\eps^p})$, as long as $d = \Omega(\frac{\ln(K/\gamma) + p\ln(1/\eps)}{\eps^{3p+2}}).$ When $p \rightarrow \infty,$ the right term in the definition of $M$ dominates, and we have $M = \Omega\Big(\eps^{2/3}\big(\frac{d}{\ln(dK/\gamma)}\big)^{1/3}\Big),$ which, for constant $\eps,$ matches the best known bound for deterministic algorithms in this setting, due to~\cite{Nemirovski:1994}. 
\subsection{Smooth and Weakly Smooth Optimization}
\label{subsec:SmoothLB}
To apply Theorem~\ref{thm:MetaThm} and obtain lower bounds for (weakly) smooth classes of functions, we need to 
design an appropriate 
local smoothing. %
This is indeed possible for $p \geq 2,$ as we show below.

\begin{remark} \label{rem:smoothings}
Here we list some known local smoothings from the literature.
\begin{enumerate}
\itemsep0em 
\item Let $2\leq p\leq \infty$, $d\in\mathbb{N}$, and $0\leq \kappa\leq 1$. Then, for any $\eta>0$, 
the space $\ell_p^d=(\mathbb{R}^d,\|\cdot\|_p)$ is $(\kappa,\eta,\eta,\mu)$-locally
smoothable when
$\mu=2^{1-\kappa}(\min\{ p,\ln d\}/\eta)^{\kappa}$. We prove this in the Appendix~\ref{app:background}, following \cite{Guzman:2015}.
\item Let $d,\kappa\in\mathbb{N}$ and $\eta>0$. Then $\ell_2^d$ is 
$(\kappa,\kappa\eta,\kappa\eta,(d/\eta)^{\kappa})$-locally smoothable. This is achieved by a sequential integral convolution w.r.t.~the
uniform kernel on the ball of radius $\eta$~\cite{Agarwal:2018}. They also show that for $1\leq L\leq d$, the restriction of ${\cal S}$ to the set:
$$ 
\big\{ f:\RR^d\to \RR:\,\, f\in{\cal F}_{\ell_2^d}^{0}(1),\,\, (\exists\, \Gamma \mbox{ subspace of dim. }L)\,
(\forall \by\in \Gamma^{\perp})\,\, f(\bx)=f(\bx+\by) \big\},
$$  
satisfies an improved 
$(\kappa,\kappa\eta,\kappa\eta,(L/\eta)^{\kappa})$ local smoothing
property. 
\end{enumerate}
\end{remark}

Our next result addresses the smooth $\ell_p^d$-setup when $p\geq 2$. Its proof is provided in Appendix~\ref{app:proofs-apps}.
\begin{restatable}{theorem}{ptwothmsmooth}\label{thm:p>2-smooth}
Let $p\geq 2$, $\feasset \supseteq {\cal B}_p^d$, and $\eps \in (0, 1/2),$ $\gamma \in (1/\mathrm{poly}(d), 1).$ Then:  
\begin{align*}
   \mathrm{Compl}_{\mathrm{HP}}^{\gamma}({\cal F}_{\ell_p^d}^{\kappa}(1),\feasset,K,\varepsilon)\geq M := &\min \bigg\{ \Big(\frac{1}{2^{3+4\kappa}\, \eps\, (\min\{p, \ln(d)\})^\kappa}\Big)^{\frac{p}{1+\kappa(1+p)}}, \;  \\
    &\frac{d}{2^9\ln(MK/\gamma)}\left(2^{\frac{1+3p + 2\kappa(1+p)}{1+p}}\min\{p, \ln(d)\}^\kappa\eps\right)^{\frac{2(1+p)}{1+\kappa(1+p)}}\bigg\}.
\end{align*}
\end{restatable}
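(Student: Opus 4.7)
The plan is to apply Theorem~\ref{thm:MetaThm} directly, reusing the disjoint-support random vectors $\bz^i = \mI_L^i \br^i/L^{1/p^*}$ from the proof of Theorem~\ref{thm:p>2-nonsmooth} together with the local smoothing of $\ell_p^d$ (for $p\geq 2$) from Remark~\ref{rem:smoothings}(1), which provides $(\kappa,\eta,\eta,\mu)$-local smoothability with $\mu = 2^{1-\kappa}(\beta/\eta)^\kappa$ (writing $\beta := \min\{p,\ln d\}$). The parameters $\eta$ and $L$ are left free, to be tuned at the end.

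I would then verify the four hypotheses of Theorem~\ref{thm:MetaThm} in turn. Hypothesis (a) decomposes into the radius bound $\eta \leq \bar\delta/8$ and the approximation bound $\eta \leq \varepsilon\mu/4$; after substituting the smoothing formula, the latter rearranges to $\eta^{1+\kappa}\leq 2^{-1-\kappa}\varepsilon\beta^\kappa$. Hypothesis (b) holds deterministically: disjointness of the supports implies $\|\sum_i \lambda_i\bz^i\|_{p^*}^{p^*}=\sum_i|\lambda_i|^{p^*}$, so $\inf_{\boldsymbol{\lambda}\in\Delta_M}\|\sum_i \lambda_i\bz^i\|_{p^*} = M^{-1/p}$, and the condition reduces to $M^{-1/p} > 4\mu\varepsilon$. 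Hypothesis (c) is the same Hoeffding argument used in the nonsmooth case, yielding $\alpha = L/2$ and hence $\bar\delta = 16\sqrt{2\ln(MK/\gamma)/L}$. Hypothesis (d) then reads $16 M\sqrt{2\ln(MK/\gamma)/L} \leq \mu\varepsilon$.

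After eliminating $\mu$ via the smoothing formula, condition (b) becomes a lower bound on $\eta^\kappa$, while the two parts of (a) and condition (d) become upper bounds on $\eta^\kappa$ in terms of $L$ and $M$; compatibility of these bounds subject to the partition constraint $ML \leq d$ determines the largest admissible $M$. I expect two regimes. In the \emph{high-dimensional regime}, $ML \leq d$ is slack and the binding constraints are the radius bound, (b), and (d): combining (d) tight with the radius bound tight gives $\mu\varepsilon = 8M\eta$, combining with (b) tight gives $\mu\varepsilon = M^{-1/p}/4$, so $\eta = M^{-(p+1)/p}/32$; eliminating $\eta$ via the identity $\mu\eta^\kappa = 2^{1-\kappa}\beta^\kappa$ yields $M^{(1+\kappa(1+p))/p} = 1/(2^{3+4\kappa}\varepsilon\beta^\kappa)$, i.e.\ the first term. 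In the \emph{moderate-dimensional regime}, the partition constraint $ML \leq d$ becomes binding with $L \approx d/M$, and propagating this through the radius bound and (d) (with (b) still tight) produces the second term $\frac{d}{2^9\ln(MK/\gamma)}\bigl(2^{(1+3p+2\kappa(1+p))/(1+p)}\beta^\kappa\varepsilon\bigr)^{2(1+p)/(1+\kappa(1+p))}$. Taking the minimum of the two regime-bounds yields the theorem.

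The main obstacle will be the algebraic bookkeeping: tracking the exponents of $2$ and of $\varepsilon\beta^\kappa$ through the elimination of $\eta$ and $L$, and checking that the $(\eta,L)$ chosen to balance (b) and (d) in each regime also satisfies the approximation bound $\eta^{1+\kappa}\leq 2^{-1-\kappa}\varepsilon\beta^\kappa$ (a quick check for the high-dimensional regime gives $M\geq (1/(2^{4+4\kappa}\varepsilon\beta^\kappa))^{p/((p+1)(1+\kappa))}$, which is dominated by the first term). The clean exponents $p/(1+\kappa(1+p))$ and $2(1+p)/(1+\kappa(1+p))$ arise from the rescaling to ${\cal F}^\kappa(1)$ in the construction~\eqref{eqn:rnd_pb} (the division by $\mu$) combined with the smoothing identity $\mu\eta^\kappa \propto \beta^\kappa$.
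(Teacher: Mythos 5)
Your proposal is correct and takes essentially the same route as the paper: the same disjoint-support Rademacher vectors (giving $\|\sum_i\lambda_i\bz^i\|_{p^*}=\|\bm\lambda\|_{p^*}$ deterministically, hence the bound $M^{-1/p}\geq 4\mu\eps$ for hypothesis~(b), and $\alpha=L/2$ for hypothesis~(c)), the same infimal-convolution local smoothing from Remark~\ref{rem:smoothings}, and the same balancing of the constraints from Theorem~\ref{thm:MetaThm} subject to $ML\leq d$. The only cosmetic difference is that the paper fixes $\eta=\bar\delta/8$ at the outset and then balances the resulting two bounds on $M$ via the choice of $L$, whereas you keep $\eta,L$ free and identify which constraints bind in each regime; your explicit elimination $\eta=M^{-(p+1)/p}/32$ and $\mu\eta^\kappa=2^{1-\kappa}\beta^\kappa$ reproduces the first term exactly, and the second term follows the same way once $ML=d$ binds.
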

\noindent The bound from Theorem~\ref{thm:p>2-smooth} may be difficult to read, so let us point out a few notable special cases:
\begin{itemize}
\itemsep0em 
    \item When $\kappa = 0,$ $p \rightarrow \infty,$ the bound is uninformative, and one should instead use Theorem~\ref{thm:p>2-nonsmooth}. This is a consequence of the particular choice of $L$ in the proof, and its dependence on $\kappa$. 
    \item When $\kappa \in (0, 1],$ $p \rightarrow \infty,$ if $d = \Omega\left((\ln(\frac{K}{\gamma}) + \frac{1}{\kappa}\ln(\frac{1}{\eps}))(\frac{1}{\eps})^{\frac{3}{\kappa}}\right),$ then $M = \frac{1}{\ln(d)}(\frac{1}{2^{3+4\kappa}\eps})^{1/\kappa},$ which is tight up to a factor $\frac{1}{\ln(d)}$ and achieved for $K=1$ by~\cite{Frank:1956} method.
    \item When $\kappa = 0,$ $p < \infty,$ and $d = \Omega \left((\ln(K/\gamma) + p\ln(1/\eps))(\frac{1}{\eps})^{3p+2} \right),$ then $M = (\frac{1}{8\eps})^p,$ which is achieved for $K=1$ by the Mirror-Descent method~\cite{Nemirovski:1983}.
    \item When $\kappa = 1,$ $p < \infty,$ and $d = \Omega \left(\max\{(\ln(K/\gamma) + \ln(1/\eps))(\frac{1}{\eps})^{3}, \exp(p)\} \right),$ then $M=(\frac{1}{128 p\eps})^{\frac{p}{p+2}}$. %
    These bounds are unimprovable and are achieved for $K=1$ by the Nemirovski-Nesterov accelerated method~\cite{Nemirovski:1985,dAspremont:2018}.%
\end{itemize}

\begin{remark}
The proof strategy of Theorem~\ref{thm:p>2-smooth} for $p=2$ can also be used to obtain lower bounds for higher-order smooth convex optimization, following \cite{Agarwal:2018}. Namely, using the sequential integral convolution smoothing from Remark~\ref{rem:smoothings}, we can obtain analog lower bounds as in~\cite{Agarwal:2018}, that also apply to parallel algorithms. We defer the details of this simple corollary to the full version of the paper. 
\end{remark}

Unfortunately, the smoothing approach is not immediately applicable when $1\leq p<2$, due to
the fact that there are no known regularizers for an infimal convolution smoothing.  This is related to the fact that these spaces are not $2$-uniformly smooth (see, e.g., \cite{Ball:1994}) which leads to a natural barrier for the approach. 
However, this difficulty has been circumvented by \cite{Guzman:2015}, where 
lower bounds in this regime are shown by a reduction from the $p=\infty$ case, specifically through a linear embedding of problem classes. We follow the same approach, and for the sake of brevity, we only provide a proof sketch in Appendix~\ref{app:proofs-apps}.
\begin{restatable}{theorem}{oneptwothmsmooth}\label{thm:1<p<2-smooth}
Let $1\leq p<2$, $0<\kappa\leq 1$, $\feasset \supseteq {\cal B}_p^d$, $\eps \in (0, 1/2),$ $\gamma \in (1/\mathrm{poly}(d), 1).$ Then, there exist constants $\nu, c(\kappa)>0$, such that if $d \geq \frac{1}{\nu}\left\lceil2(\ln(\nu d K/\gamma))^{\frac{2\kappa}{3+2\kappa}}\big(\frac{1}{\eps}\big)^{\frac{6}{3+2\kappa}}\right\rceil$, then: %
\begin{align*}
    \mathrm{Compl}_{\mathrm{HP}}^{\gamma}({\cal F}_{\ell_p^d}^{\kappa}(1),\feasset,K,\varepsilon)\geq M := \frac{c_\kappa}{\ln(1/\eps) + \kappa \ln\ln(dK/\gamma)}\Big(\frac{1}{\eps}\Big)^{\frac{2}{3+2\kappa}}.
\end{align*}
\end{restatable}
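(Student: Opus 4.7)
The plan is to reduce the $1 \leq p < 2$ regime to the $p = \infty$ regime already handled by Theorem~\ref{thm:p>2-smooth}, using a random linear embedding as in~\cite{Guzman:2015}. I would first invoke Theorem~\ref{thm:p>2-smooth} in its ``$p \to \infty$'' specialization: for an auxiliary dimension $d_0$ and accuracy $\eps_0$ to be chosen later, there exists a distribution over hard instances $f \in {\cal F}_{\ell_\infty^{d_0}}^\kappa(\mu_0)$ on which any $K$-parallel algorithm requires at least $M_0 = \Omega\bigl(\frac{1}{\ln d_0}(1/\eps_0)^{1/\kappa}\bigr)$ adaptive rounds to produce an $\eps_0$-minimizer over ${\cal B}_\infty^{d_0}$, provided $d_0$ is polynomially large in $1/\eps_0$ and $\ln(K/\gamma)$.

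The next step is to construct, with high probability over a random choice (say, a properly scaled Gaussian or subgaussian matrix), a linear map $A:\RR^d \to \RR^{d_0}$ satisfying simultaneously
\[
A\,{\cal B}_p^d \supseteq c_p\,{\cal B}_\infty^{d_0}, \qquad \|A\|_{\ell_p^d \to \ell_\infty^{d_0}} \leq \lambda_A,
\]
for explicit positive constants $c_p$ and $\lambda_A$. This is where the dimension hypothesis enters: the Dvoretzky-type embedding of $\ell_\infty^{d_0}$ into $\ell_p^d$ from~\cite{Guzman:2015} requires $d$ to exceed $d_0$ by a polylogarithmic factor, which after balancing with $\eps_0$ below will match the stated bound $d \geq \frac{1}{\nu}\bigl\lceil 2(\ln(\nu d K/\gamma))^{2\kappa/(3+2\kappa)} (1/\eps)^{6/(3+2\kappa)}\bigr\rceil$.

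Given such an $A$, I would set $F(\bx) := \beta\, f(A\bx/c_p)$ on ${\cal B}_p^d$ with the scalar $\beta$ tuned to normalize the effective Hölder constant to $1$. Using $\nabla F(\bx) = (\beta/c_p) A^\top \nabla f(A\bx/c_p)$ together with the duality identity $\|A^\top\|_{\ell_1^{d_0}\to \ell_{p^*}^d} = \|A\|_{\ell_p^d \to \ell_\infty^{d_0}}$ gives
\[
\|\nabla F(\bx) - \nabla F(\by)\|_{p^*} \leq \beta (\lambda_A/c_p)^{1+\kappa}\mu_0\, \|\bx - \by\|_p^\kappa,
\]
so the appropriate choice of $\beta$ places $F \in {\cal F}_{\ell_p^d}^\kappa(1)$. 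Because ${\cal B}_\infty^{d_0} \subseteq A{\cal B}_p^d / c_p$, an $\eps$-minimizer of $F$ on ${\cal B}_p^d$ yields an $\eps_0$-minimizer of $f$ on ${\cal B}_\infty^{d_0}$ with $\eps_0$ proportional to $\eps/\beta$, and a $K$-parallel local oracle for $F$ is simulated in one round by a local oracle for $f$ (via $\nabla F = (\beta/c_p) A^\top \nabla f \circ A$ together with a local Taylor expansion). Substituting the resulting $\eps_0$ and $d_0$ into $M_0$ and balancing the free parameters against the embedding constraint will then produce both the exponent $2/(3+2\kappa)$ on $1/\eps$ and the $1/[\ln(1/\eps) + \kappa\ln\ln(dK/\gamma)]$ prefactor claimed in the theorem.

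The main obstacle is producing the embedding $A$ with the two opposing properties, a large $\ell_\infty^{d_0}$ ball inside its image and a small $\ell_p \to \ell_\infty$ operator norm, since for $p < 2$ this is not available through elementary norm comparisons and requires the random-matrix construction of~\cite{Guzman:2015}; this is also the mechanism responsible for the extra $1/\eps$-factor in the dimension condition. A secondary nuisance is the bookkeeping of the three scalings $\beta$, $c_p$, $\lambda_A$, so that the exponent $6/(3+2\kappa)$ in the dimension requirement and the exponent $2/(3+2\kappa)$ in the lower bound emerge consistently from the final optimization step.
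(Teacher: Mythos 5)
Your proposal is correct and takes essentially the same route as the paper: both reduce the $1 \le p < 2$ case to the $p = \infty$ case via a Dvoretzky-type linear embedding of $\ell_\infty^T$ into $\ell_p^d$ (the paper invokes Dvoretzky's theorem plus Hahn--Banach rather than a direct random-matrix construction, but this is cosmetic), pull back the hard $\ell_\infty$ instances and the local oracle through the embedding, and then apply Theorem~\ref{thm:p>2-smooth} at $p=\infty$ together with the rescaling of Remark~\ref{rem:rescaling_LB}, balancing the embedded dimension against $\eps$ to obtain the stated exponents.
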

Let us consider some special cases of the bound from Theorem~\ref{thm:1<p<2-smooth}. Suppose that $d$ is sufficiently high-dimensional so that the theorem applies (note that $d = \Omega(\ln(dK/\gamma)\eps^{-2})$ suffices). When $\kappa = 1,$ then $M = \Omega(\frac{1}{\ln(1/\eps) + \ln\ln(dK/\gamma)}(\frac{1}{\eps})^{2/5}).$ This bound does not match the sequential complexity $\Theta(1/\sqrt{\eps})$ of this problem -- apart from the logarithmic factors, the exponent in $1/\eps$ is off by $1/10$. This is a direct consequence of the right term in Theorem~\ref{thm:p>2-smooth} not being large enough for $p \to \infty,$ as the bound in Theorem~\ref{thm:1<p<2-smooth} is obtained from this case. Further improvements of this term would also improve the bound for the nonsmooth $\ell_\infty$ case of~\cite{Nemirovski:1994} for, at least, some regimes of $\eps$. Similarly, when $\kappa = 0,$ the exponent in $1/\eps$ is $2/3$, which is off by additive $4/3$ from the sequential complexity of this setting.  
{This is aligned with the intuition that smooth lower bounds have a milder high-dimensional regime than nonsmooth ones (which holds in the sequential case). This way, the embedding approach is stronger on higher levels of smoothness.}

The main difficulty in obtaining tighter bounds in these regimes ($\ell_\infty$ and its implications on smooth and weakly-smooth $p \in [1, 2)$ settings) is in relaxing Assumption (d) from Theorem~\ref{thm:MetaThm}. It seems unlikely that this would be possible without completely changing the hard instance used in its proof (as Assumption (d) is crucially used in bounding below the optimality gap), and would likely require a fundamentally different approach from the one used here, as well as in the related work.
%
%
\section{Conclusion}\label{sec:conclusion}
This paper rules out the possibility of significantly improving the complexity of convex optimization via parallelization in the exploration of the feasible set with polynomially-bounded in the dimension number of queries per round, 
for essentially all interesting geometries and classes of functions with different levels of smoothness. Most of the obtained lower bounds match the sequential complexity of these problems, up to, at most, a logarithmic factor in the dimension, and are, thus, (nearly) tight.

However, our bounds only apply to the high-dimensional setting, where $d = \Omega (1/\mathrm{poly}(\eps)).$ In the low-dimensional setting, the only bound we are aware of is in terms of worst-case complexity (for deterministic algorithms) for nonsmooth optimization over the $\ell_\infty$ ball, due to~\cite{Nemirovski:1994}. The bound is $\Omega((\frac{d}{\ln(dK)})^{1/3}\ln(1/\eps)).$ 
It was conjectured in~\cite{Nemirovski:1994} that the correct bound for nonsmooth optimization over the $\ell_\infty$ ball should be $\Omega(\frac{d}{\ln(K)}\ln(1/\eps)).$ Our analysis recovers a bound similar to Nemirovski's result in the stronger high probability complexity model, but \emph{only for constant} $\eps.$ We conjecture that in the low-dimensional setting of both (weakly-)smooth and nonsmooth  optimization the correct answer should be  $\Omega(\frac{d}{\ln(K/\gamma)}\ln(1/\eps)).$ 

%
%
%
%

%
%
\section*{Acknowledgements}{
%
We thank Adam Smith for pointing out the importance of lower bounds in parallel convex optimization from the local differential privacy perspective, for his useful comments and insights that led to this work, and for many useful discussions. 
We also thank Ilias Diakonikolas for sharing his expertise on anticoncentration, as well as Yossi Arjevani, Nicolas Casabianca, Vitaly Feldman, and Jos\'e Verschae for valuable discussions and comments regarding this work. 
An earlier version of this paper made an incorrect use of the orthogonal splittings of $\ell_p$ spaces, which has since been corrected. We thank Boris Kashin for pointing out this issue. We also thank an anonymous COLT reviewer who {pointed out imprecise conditioning in the proof of Lemma~\ref{lemma:gen-lb-event}, which has since been corrected.}
}
\bibliographystyle{abbrv}
\bibliography{bibliography}

\appendix 
%
%
\section{Additional Background}\label{app:background}
For completeness, this section provides additional background and statements of some known facts that are used in the proofs of our lower bounds.
%
%
\subsection{Notions of Complexity in the Local Oracle Model}\label{app:complexity}
The {\em worst-case} oracle complexity is defined as:
$$ \Compl_{\mbox{\footnotesize{WC}}}({\cal F},\feasset,K,\varepsilon)
= \inf_{A\in{\cal A}^K({\cal O})} \sup_{f\in {\cal F}} T(A,f,\varepsilon). $$
For the case of randomized algorithms, it can be shown \cite{Nemirovski:1983}
that their complexity is equivalent to the one obtained from the expected running time over mixtures of
deterministic algorithms. That means that we can define the {\em randomized} oracle complexity as:
$$ \Compl_{\mbox{\footnotesize{R}}}({\cal F},\feasset,K,\varepsilon)
= \inf_{R\in\Delta({\cal A}^K({\cal O}))} \sup_{f\in {\cal F}}  \mathbb{E}_{A\sim R}[T(A,f,\varepsilon)], $$
where $\Delta({\cal B})$ is the set of probability distributions on the set ${\cal B}$.

We may consider an even weaker notion of {\em distributional} oracle
complexity, defined as
$$ \Compl_{\mbox{\footnotesize{D}}}({\cal F},\feasset,K,\varepsilon)
=  \sup_{F\in \Delta({\cal F})} \inf_{A\in{\cal A}^K({\cal O})}  \mathbb{E}_{f\sim F}[T(A,f,\varepsilon)]. $$
In this case, it is important to note that lower bounds cannot be obtained from adversarial choices of
$f$, as the probability distribution on instances $F$ must be set before the algorithm is chosen. 
It is easily seen that:  
$$ \Compl_{\mbox{\footnotesize{D}}}({\cal F},\feasset,K,\varepsilon) \leq
\Compl_{\mbox{\footnotesize{R}}}({\cal F},\feasset,K,\varepsilon) \leq
\Compl_{\mbox{\footnotesize{WC}}}({\cal F},\feasset,K,\varepsilon).$$

Finally, given a confidence parameter $0<\gamma<1$, \emph{high probability} complexity is defined as:
$$ \Compl_{\mbox{\footnotesize{HP}}}^{\gamma}({\cal F},\feasset,K,\varepsilon)
=  \sup_{F\in \Delta({\cal F})} \inf_{A\in{\cal A}^K({\cal O})}  \inf\{\tau:\, \prob_{f\sim F}[T(A,f,\varepsilon)\leq\tau]\geq \gamma\}. $$
Notice that a lower bound on the high probability complexity with confidence parameter $\gamma$
gives a lower bound on the distributional complexity, by the law of total probability
$$  \Compl_{\mbox{\footnotesize{D}}}({\cal F},\feasset,K,\varepsilon)
\geq (1-\gamma)  \Compl_{\mbox{\footnotesize{HP}}}^{\gamma}({\cal F},\feasset,K,\varepsilon).$$
All lower bounds in this work are for high probability complexity, with $\gamma=1/\mbox{poly}(d)$.
%
%
\subsection{Geometry of $\ell_p$ Spaces}

In the proof of Theorem~\ref{thm:1<p<2-smooth}, we make use Dvoretzky's Theorem, on the existence of nearly Euclidean
sections of the $\|\cdot\|_p$ ball. Its full description and proof may be 
found in \cite[Theorem 4.15]{Pisier:1989}. Here we state a concise version with 
what is needed for our results.

\begin{theorem}[Dvoretzky]
There exists a universal constant $0<\alpha<1$ such that for any $d>1$, there 
exists a subspace $F\subseteq \RR^d$ of dimension at most $\alpha d$ and an
ellipsoid ${\cal E}\subseteq F$ such that
$$ \frac12{\cal E} \subseteq {\cal B}_p^d\cap F \subseteq {\cal E}.$$
\end{theorem}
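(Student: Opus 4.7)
The plan is to prove Theorem~\ref{thm:1<p<2-smooth} by reducing the $\ell_p^d$-setup with $p \in [1,2)$ to the $\ell_\infty^m$-setup, following the linear-embedding strategy of~\cite{Guzman:2015}. The starting point is the lower bound for nonsmooth/weakly-smooth $\ell_\infty^m$ optimization, which is obtained as a corollary of Theorem~\ref{thm:p>2-smooth} by choosing $p = \Theta(\ln m)$ (so that $\min\{p,\ln m\}$ behaves like $\ln m$ and the $\ell_p^m$ norm agrees with $\|\cdot\|_\infty$ up to a constant factor on $\RR^m$). Plugging $p = \Theta(\ln m)$ into Theorem~\ref{thm:p>2-smooth} yields, for any $\kappa \in (0,1]$ and in a suitable high-dimensional regime for $m$, a lower bound of the form $\widetilde\Omega((1/\eps_\infty)^{1/\kappa})$ on the number of parallel rounds for ${\cal F}_{\ell_\infty^m}^{\kappa}(1)$ on ${\cal B}_\infty^m$.

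Next I would construct a linear map $\mathbf{A}:\RR^d \to \RR^m$ (a rescaled Gaussian or Rademacher matrix) with, w.h.p., the following two properties provided $d \geq C m \ln m$: (i) $\mathbf{A}({\cal B}_p^d) \supseteq \beta \,{\cal B}_\infty^m$ for some explicit $\beta = \beta(d,m,p) > 0$; and (ii) the dual operator norm $\|\mathbf{A}^T\|_{\ell_1^m \to \ell_{p^*}^d}$ together with $\|\mathbf{A}\|_{\ell_p^d \to \ell_\infty^m}$ is controlled via Gaussian concentration. Given such $\mathbf{A}$ and any hard instance $f \in {\cal F}_{\ell_\infty^m}^{\kappa}(1)$, the pulled-back function $\widetilde f(\bx) := f(\beta^{-1}\mathbf{A}\bx)$ is convex, and by the chain rule lies in ${\cal F}_{\ell_p^d}^{\kappa}(C_\kappa)$ for an explicit $C_\kappa$ depending on $\beta$ and the operator norms in (ii). By (i), an $\eps$-approximate minimizer of $\widetilde f$ on ${\cal B}_p^d$ pulls forward to an $\eps$-approximate minimizer of $f$ on ${\cal B}_\infty^m$. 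Crucially, the reduction is oracle-preserving: a $K$-parallel local-oracle algorithm for $\widetilde f$ can be simulated, query by query, by the corresponding algorithm for $f$ (applying $\mathbf{A}$ to queries and $\mathbf{A}^T$ to oracle answers, all in a single round), so the round complexity transfers directly.

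After rescaling so that the pulled-back class is exactly ${\cal F}_{\ell_p^d}^{\kappa}(1)$ on ${\cal B}_p^d$, the effective accuracy in the $\ell_\infty^m$-problem becomes $\eps_\infty = \Theta(\eps / C_\kappa)$, a power of $m$ times $\eps$. Substituting into the $\widetilde\Omega((1/\eps_\infty)^{1/\kappa})$ bound and optimizing $m$ against the high-dimensional requirement on $m$ inherited from Theorem~\ref{thm:p>2-smooth} produces the claimed exponent $\frac{2}{3+2\kappa}$ in $1/\eps$, together with the dimension requirement $d \gtrsim (\ln(dK/\gamma))^{2\kappa/(3+2\kappa)}\,\eps^{-6/(3+2\kappa)}$. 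The main obstacle will be calibrating this tradeoff: the parameter $m$ has to be large enough to place us in the high-dimensional regime of Theorem~\ref{thm:p>2-smooth} (which demands $m \gtrsim \mathrm{poly}(\log(K/\gamma),1/\eps_\infty)$), but small enough that the embedding constants $\beta,\rho_1,\rho_2$ give a favorable $C_\kappa$; balancing these two constraints is what fixes $m$ as a function of $(d,\eps,\kappa,\ln(K/\gamma))$ and yields both the explicit constant $c(\kappa)$ and the logarithmic denominator $\ln(1/\eps)+\kappa\ln\ln(dK/\gamma)$ in the final bound.
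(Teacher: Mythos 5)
Your proposal does not prove the statement in question. The statement is Dvoretzky's theorem on nearly-Euclidean sections of the $\ell_p$ ball: for every $d>1$ there exist a subspace $F\subseteq\RR^d$ of dimension at most $\alpha d$ and an ellipsoid ${\cal E}\subseteq F$ with $\tfrac12{\cal E}\subseteq{\cal B}_p^d\cap F\subseteq{\cal E}$. This is a classical result from the local theory of Banach spaces; the paper does not prove it at all, but cites \cite{Pisier:1989} (Theorem 4.15 there) and merely records the concise version it needs. What you have written is instead a proof sketch of Theorem~\ref{thm:1<p<2-smooth}, i.e., of the complexity lower bound for weakly-smooth optimization over $\ell_p$ balls with $1\leq p<2$ --- a result that, in the paper, \emph{uses} Dvoretzky's theorem as a black box. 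Nowhere in your argument do you exhibit the subspace $F$, the ellipsoid ${\cal E}$, or the two containments; the objects you construct (a rescaled Gaussian or Rademacher matrix $\mathbf{A}:\RR^d\to\RR^m$ with controlled operator norms) are proposed as a \emph{substitute} for the almost-spherical-section phenomenon, not as a derivation of it.

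If you did intend to prove Dvoretzky's theorem itself, the argument would have to be of an entirely different character: the standard routes are concentration of measure on the sphere (L\'evy's lemma or Gaussian concentration of Lipschitz functions) combined with a net argument over a random subspace, or, for the proportional-dimension version needed here when $p\in[1,2]$, a volume-ratio or cotype argument in the spirit of Kashin. None of these ingredients appears in your proposal. As a secondary remark, even read as a proof of Theorem~\ref{thm:1<p<2-smooth}, your route differs from the paper's: the paper obtains the embedding of $\ell_\infty^T$ into $\ell_p^d$ deterministically from the ellipsoid ${\cal E}$ furnished by Dvoretzky (via the functionals $\bg^1,\ldots,\bg^T$ defining ${\cal E}$, obtained through Hahn--Banach), whereas you propose a random matrix; making that version rigorous would require separately verifying that the pull-back preserves both membership in the function class and the locality of the oracle, which the paper gets directly from the Dvoretzky section and the argument in \cite[Appendix C]{Guzman:2015}.
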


\subsection{Smoothings}

\begin{claim}\label{claim:smoothing}
Let $2\leq p\leq \infty$, $d\in\mathbb{N}$, and $0\leq \kappa\leq 1$. Then, for any $\eta>0$, 
the space $\ell_p^d=(\mathbb{R}^d,\|\cdot\|_p)$ is $(\kappa,\eta,\eta,\mu)$-locally
smoothable when
$\mu=2^{1-\kappa}(\min\{ p,\ln d\}/\eta)^{\kappa}$. 
\end{claim}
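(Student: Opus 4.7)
My plan is to construct the smoothing map $\mathcal{S}$ via an infimal convolution with a uniformly convex regularizer adapted to the $\ell_p$ geometry, following the general recipe of \cite{Guzman:2015}. Set $q = \min\{p, \lceil \ln d \rceil\}$, so that $q \in [2, \ln d + 1]$. The point of this choice is that on $\RR^d$, the norms $\|\cdot\|_p$ and $\|\cdot\|_q$ are equivalent up to absolute constants (for $p \ge \ln d$, $\|\bx\|_\infty \le \|\bx\|_q \le e\,\|\bx\|_\infty$; for $p < \ln d$ they coincide), yet $\omega_q(\bx) := \tfrac{1}{2}\|\bx\|_q^2$ is $(q{-}1)$-strongly convex with respect to $\|\cdot\|_q$ by the Ball--Carlen--Lieb inequality. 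Equivalently, its Fenchel conjugate is $\tfrac{1}{q-1}$-smooth w.r.t.\ the dual norm.

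With $q$ fixed, define
\[
 \mathcal{S}f(\bx) \;=\; \inf_{\by \in \RR^d}\Big\{ f(\by) + \Phi_{\kappa,\eta}(\by - \bx) \Big\},
\qquad
\Phi_{\kappa,\eta}(\bz) \;=\; c_\kappa \,\eta\,\Big(\tfrac{\|\bz\|_q}{\eta}\Big)^{(1+\kappa)/\kappa},
\]
where $c_\kappa$ is an explicit constant absorbing $\kappa$-dependent factors. The Fenchel conjugate of $\Phi_{\kappa,\eta}$ is proportional to $\eta\,(\|\cdot\|_{q^*}/c_\kappa)^{1+\kappa}$. Since $\Phi_{\kappa,\eta}$ is uniformly convex of order $(1+\kappa)/\kappa$ with the appropriate modulus (inherited from strong convexity of $\omega_q$), a standard Fenchel-duality computation shows that $\mathcal{S}f$ has $\kappa$-H\"older continuous gradient w.r.t.\ $\|\cdot\|_q$, hence also w.r.t.\ $\|\cdot\|_p$ up to a constant, with constant $O\big((q/\eta)^\kappa\big) = O\big((\min\{p,\ln d\}/\eta)^\kappa\big)$. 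Tuning $c_\kappa$ gives exactly $\mu = 2^{1-\kappa}(\min\{p,\ln d\}/\eta)^\kappa$.

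For the uniform closeness property (i), since any $f \in \mathcal{F}^0_{(\vecspace,\|\cdot\|_p)}(1)$ is $O(1)$-Lipschitz in $\|\cdot\|_p$ (hence in $\|\cdot\|_q$), choosing $\by = \bx$ in the infimum gives $\mathcal{S}f(\bx) \le f(\bx)$, while Lipschitzness gives $f(\by) \ge f(\bx) - C\|\by - \bx\|_q$; since $\Phi_{\kappa,\eta}(\bz) = \Omega(\|\bz\|_q)$ once $\|\bz\|_q \gtrsim \eta$, the optimal $\by^\star(\bx)$ satisfies $\|\by^\star(\bx) - \bx\|_q = O(\eta)$ and the resulting deficit is $O(\eta)$. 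Shrinking $c_\kappa$ by a constant yields $\|\mathcal{S}f - f\|_\infty \le \eta$. For the locality property (ii), the same bound $\|\by^\star(\bx) - \bx\|_q = O(\eta) \le 2\eta/2 = r$ shows that the infimum defining $\mathcal{S}f(\bx)$ depends only on values of $f$ on $B_{\|\cdot\|_p}(\bx, 2\eta)$, so when $f \equiv g$ on $B_{\|\cdot\|_p}(\bx_0, 2r)$ the two smoothings agree on $B_{\|\cdot\|_p}(\bx_0, r)$ with $r = \eta$.

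The main technical obstacle is the constant-bookkeeping for property (iii) — namely, producing the specific modulus $2^{1-\kappa}(\min\{p,\ln d\}/\eta)^\kappa$ rather than a loose $O(\cdot)$ version. This requires carefully tracking how the $(q{-}1)$-strong convexity of $\omega_q$ (equivalently, $2$-uniform smoothness of $\ell_{q^*}$) lifts through the Young-type inequality that converts strong convexity of exponent $2$ into $\kappa$-H\"older gradient continuity of exponent $(1+\kappa)/\kappa$. A secondary, minor issue is bridging $\|\cdot\|_q$ and $\|\cdot\|_p$ for $p > \ln d$: the norm-equivalence factor is at most $e$, and can be absorbed into $c_\kappa$ without affecting the form of $\mu$.
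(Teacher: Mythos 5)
Your plan is to construct a $\kappa$-adapted infimal-convolution regularizer from scratch, but the paper's proof is a one-liner that reuses the $\kappa=1$ result of \cite{Guzman:2015} as a black box: that reference shows not only that $\ell_p^d$ is $(1,\eta,\eta,\tilde\mu)$-locally smoothable with $\tilde\mu=\min\{p,\ln d\}/\eta$ (via inf-convolution with $2\|\cdot\|_r^2$, $r=\min\{p,3\ln d\}$), but also that the smoothing map sends $1$-Lipschitz functions to $1$-Lipschitz functions. Given both facts, for any $\kappa\in[0,1]$ one writes $\|\nabla {\cal S}f(\bx)-\nabla {\cal S}f(\by)\|_{*}=\|\nabla {\cal S}f(\bx)-\nabla {\cal S}f(\by)\|_{*}^{1-\kappa}\,\|\nabla {\cal S}f(\bx)-\nabla {\cal S}f(\by)\|_{*}^{\kappa}\le 2^{1-\kappa}(\tilde\mu\|\bx-\by\|)^{\kappa}$, which is exactly $\mu=2^{1-\kappa}\tilde\mu^{\kappa}$, with no new smoothing construction needed. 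This elementary interpolation also handles $\kappa=0$ without trouble, which your construction does not: your regularizer exponent $(1+\kappa)/\kappa$ is undefined at $\kappa=0$.

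Beyond being more involved, your route as written has several gaps that would need to be closed. First, for $q\ge 2$ the Ball--Carlen--Lieb inequality gives that $\tfrac12\|\cdot\|_q^2$ has $(q-1)$-\emph{Lipschitz gradient}, not that it is $(q-1)$-\emph{strongly convex} (the latter holds for $q\le 2$); you have swapped the two. Second, and more substantively, the property of $\Phi_{\kappa,\eta}$ that passes to the envelope ${\cal S}f=f\,\square\,\Phi_{\kappa,\eta}$ is H\"older continuity of $\nabla\Phi_{\kappa,\eta}$, not uniform convexity of $\Phi_{\kappa,\eta}$: since $({\cal S}f)^{*}=f^{*}+\Phi_{\kappa,\eta}^{*}$, one needs $\Phi_{\kappa,\eta}^{*}$ to be uniformly convex of exponent $(1+\kappa)/\kappa$, equivalently $\Phi_{\kappa,\eta}$ to have $\kappa$-H\"older gradient. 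A function behaving like $\|\bz\|_q^{(1+\kappa)/\kappa}$ does \emph{not} have $\kappa$-H\"older gradient (its gradient grows like $\|\bz\|^{1/\kappa}$ at infinity); the natural candidate with $\kappa$-H\"older gradient is $\|\bz\|_q^{1+\kappa}$, so your exponent appears to be off. Third, even with the right exponent, certifying the H\"older modulus of $\nabla(\|\cdot\|_q^{1+\kappa})$ for $q\ge 2$ with the tight constant is not merely ``standard Fenchel duality'' and would require a careful argument. These points may all be fixable, but the paper's interpolation sidesteps them entirely.
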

\begin{proof} 
First, we use the fact from \cite[Proposition 1]{Guzman:2015} that $\ell_p^d$ is $(1,\eta,\eta,\mu)$-locally smoothable with parameter $\tilde\mu=\min\{p,\ln d\}/\eta$. This can be achieved by 
infimal convolution smoothing
$${\cal S}f(\bx) = \inf_{h\in {\cal B}_p(0,\eta)}[f(\bx+\bh)+\phi(\bh)] \qquad (\forall \bx\in \RR^d),$$
where $\phi(\bx)=2\|\bx\|_r^2$ with $r=\min\{p,3\ln d\}$ as a regularizer.
 Furthermore, in this reference it is proved that if $f$ is a 1-Lipschitz function, then not only  ${\cal S}f\in{\cal F}_{\ell_p^d}^1(\mu)$ but also ${\cal S}f$ is 1-Lipschitz as well; therefore, the following two inequalities hold for any $\bx, \by\in \RR^d$
\begin{eqnarray*}
\|\nabla f(\bx)-\nabla f(\by)\|_{\ast}^{1-\kappa}&\leq& 2^{1-\kappa}\\
\|\nabla f(\bx)-\nabla f(\by)\|_{\ast}^{\kappa}&\leq& \tilde\mu^{\kappa},
\end{eqnarray*}
and multiplying these inequalities, we obtain
$\|\nabla f(\bx)-\nabla f(\by)\|_{\ast}\leq 2^{1-\kappa}\tilde\mu^{\kappa}=\mu$.
\end{proof}
\subsection{Deviation Bounds}
Here we state some specific probabilistic deviation bounds that we need for our results.
The first one is the left-sided Bernstein inequality, which may be found in \cite[Chapter 2]{wainwright_2019}. 

\begin{theorem}[Left-Sided Bernstein Inequality] \label{thm:LeftBernstein}
Let $Y_1,\ldots, Y_n$ be nonnegative independent random variables, with finite second moment. 
Then, for any $\delta>0$,
$$\prob \Big[ \sum_{k=1}^n (Y_k-\EE[Y_k]) \leq -n\delta\Big]
\leq \exp\Big\{-\frac{n\delta^2}{\frac2n\sum_{k=1}^n \EE[Y_k^2]} \Big\}.$$
\end{theorem}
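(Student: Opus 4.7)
The plan is to prove this via the standard Chernoff--Cram\'er method, exploiting the \emph{nonnegativity} of the $Y_k$ to get a one-sided sub-Gaussian-type bound on the MGF of the centered variables. Writing $X_k = Y_k - \EE[Y_k]$, for any $\lambda > 0$ I apply Markov's inequality to the nonnegative random variable $\exp(-\lambda \sum_k X_k)$ to obtain
\begin{equation*}
    \prob\Big[\sum_{k=1}^n X_k \leq -n\delta\Big] \;=\; \prob\Big[e^{-\lambda \sum_k X_k} \geq e^{\lambda n \delta}\Big] \;\leq\; e^{-\lambda n \delta} \prod_{k=1}^n \EE[e^{-\lambda X_k}],
\end{equation*}
where the product decomposition uses independence of the $Y_k$. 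The task then reduces to bounding $\EE[e^{-\lambda X_k}]$ by a Gaussian-type factor involving only $\EE[Y_k^2]$.

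The key technical step, and the place where nonnegativity is essential, is the elementary inequality $e^x \leq 1 + x + x^2/2$ for all $x \leq 0$. This can be verified by setting $g(x) = 1 + x + x^2/2 - e^x$ and checking $g(0) = g'(0) = 0$ together with $g''(x) = 1 - e^x \geq 0$ on $(-\infty,0]$, so $g$ is convex and has a minimum of $0$ at the origin on this half-line. Applying this with $x = -\lambda Y_k \leq 0$ (valid because $Y_k \geq 0$ and $\lambda > 0$) and taking expectations gives
\begin{equation*}
    \EE[e^{-\lambda Y_k}] \;\leq\; 1 - \lambda \EE[Y_k] + \tfrac{\lambda^2}{2}\EE[Y_k^2].
\end{equation*}
Multiplying by $e^{\lambda \EE[Y_k]}$ and then using $1+u \leq e^u$ with $u = -\lambda\EE[Y_k] + \tfrac{\lambda^2}{2}\EE[Y_k^2]$, I arrive at the clean bound $\EE[e^{-\lambda X_k}] \leq \exp\bigl(\tfrac{\lambda^2}{2}\EE[Y_k^2]\bigr)$.

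Substituting back, the tail bound becomes
\begin{equation*}
    \prob\Big[\sum_{k=1}^n X_k \leq -n\delta\Big] \;\leq\; \exp\Big( -\lambda n \delta + \tfrac{\lambda^2}{2} \sum_{k=1}^n \EE[Y_k^2]\Big),
\end{equation*}
which is a quadratic in $\lambda$ that I optimize by choosing $\lambda^{\ast} = n\delta / \sum_{k=1}^n \EE[Y_k^2]$. Plugging in yields an exponent of $-n^2\delta^2 / \bigl(2\sum_{k=1}^n \EE[Y_k^2]\bigr)$, which I rewrite as $-n\delta^2 / \bigl(\tfrac{2}{n}\sum_{k=1}^n \EE[Y_k^2]\bigr)$, matching the claim.

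The only subtlety—and the ``obstacle'' such as it is—lies in recognizing that the usual two-sided sub-exponential Bernstein argument (which needs a boundedness or moment condition on the positive tail of $Y_k$) is not required here: because we are bounding the \emph{left} tail, the MGF we must control is evaluated at a nonpositive argument of $Y_k$, and there the quadratic upper bound $e^x \leq 1 + x + x^2/2$ holds for free. This is what allows the clean sub-Gaussian form of the tail with variance proxy $\tfrac{2}{n}\sum_k \EE[Y_k^2]$ (note: $\EE[Y_k^2]$, not the variance $\mathrm{Var}(Y_k)$), under no assumption beyond nonnegativity and finite second moment.
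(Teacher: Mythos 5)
Your proof is correct and complete: the Chernoff bound, the inequality $e^x \leq 1 + x + \tfrac{x^2}{2}$ for $x \leq 0$ applied at $x = -\lambda Y_k$ (which is exactly where nonnegativity enters), and the optimization $\lambda^{\ast} = n\delta/\sum_k \EE[Y_k^2]$ all check out and reproduce the stated exponent. The paper does not prove this theorem itself but only cites Wainwright (2019, Chapter 2); your argument is precisely the standard one underlying that reference, so there is nothing to add.
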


We also remind the reader of the Khintchine inequality, which provides bounds for $L^p$ moments of Rademacher sequences (see, e.g., \cite{haagerup1981best}).

\begin{theorem}[Khintchine]
 Let $0<p<\infty$. There exist constants $c_p,c_p^{\prime}>0$ 
 such that for any $x_1,\ldots,x_L\in \RR$, and $r_1,\ldots,r_L$ a Rademacher sequence
 $$ c_p \|\bx\|_2 \leq \Big(\EE\Big|\sum_{i=1}^L r_ix_i\Big|^p\Big)^{1/p}
 \leq  c_p^{\prime} \|\bx\|_2.$$
\end{theorem}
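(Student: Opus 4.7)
The plan is to prove the two inequalities separately by splitting into the ranges $p \leq 2$ and $p \geq 2$, pivoting around the explicit value at $p = 2$. By homogeneity, assume $\|\bx\|_2 = 1$ and set $S := \sum_{i=1}^L r_i x_i$. Since the $r_i$ are independent, mean zero, with $\EE[r_i^2] = 1$, a direct expansion gives $\EE|S|^2 = \sum_i x_i^2 = 1$. This pivot is the backbone of the whole argument.

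The main technical step is the upper bound for $p \geq 2$, which I would derive from subgaussian tails. Using the elementary inequality $\cosh(t) \leq e^{t^2/2}$, independence yields
\[
\EE\bigl[e^{\lambda S}\bigr] \;=\; \prod_{i=1}^L \cosh(\lambda x_i) \;\leq\; \prod_{i=1}^L e^{\lambda^2 x_i^2/2} \;=\; e^{\lambda^2/2}
\]
for all $\lambda \in \mathbb{R}$. Optimizing the Chernoff bound over $\lambda$ gives $\prob[|S| \geq t] \leq 2 e^{-t^2/2}$. Integrating the tail via
\[
\EE|S|^p \;=\; p \int_0^\infty t^{p-1} \prob[|S| \geq t]\, dt \;\leq\; 2p \int_0^\infty t^{p-1} e^{-t^2/2}\, dt,
\]
the last integral is finite and depends only on $p$, which provides $c_p'$ for $p \geq 2$.

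For the lower bound when $p \leq 2$, I would use log-convexity of $L^q$ norms (Lyapunov's inequality). Pick any fixed $q > 2$ (say $q = 4$) and the unique $\theta \in (0,1)$ with $\tfrac{1}{2} = \tfrac{\theta}{p} + \tfrac{1-\theta}{q}$; then
\[
1 \;=\; \|S\|_2 \;\leq\; \|S\|_p^{\theta}\,\|S\|_q^{1-\theta} \;\leq\; \|S\|_p^{\theta}\,(c_q')^{1-\theta},
\]
where the final bound invokes the already-established upper bound for $q > 2$. Solving for $\|S\|_p$ yields the desired constant $c_p = (c_q')^{-(1-\theta)/\theta}$. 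The remaining two directions are immediate from monotonicity of $L^p$ norms in probability: Jensen's inequality gives $\|S\|_p \leq \|S\|_2 = 1$ when $p \leq 2$, which supplies the upper bound with $c_p' = 1$, and $\|S\|_p \geq \|S\|_2 = 1$ when $p \geq 2$, which supplies the lower bound with $c_p = 1$.

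The main obstacle is establishing the subgaussian moment bound with explicit care for all $p \geq 2$; once the cosh/Chernoff step is done, the interpolation step transfers the result to $p \leq 2$ essentially for free. One should also verify that the constants $c_p, c_p'$ can be chosen to depend continuously on $p$ and remain bounded on compact subsets of $(0, \infty)$, which is transparent from the integral formula and the interpolation identity.
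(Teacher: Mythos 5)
Your proof is correct. The paper itself does not prove this theorem: it is stated as a known classical fact with a citation to Haagerup's work on the best constants, so there is no in-paper argument to compare against. Your self-contained route is the standard textbook one and is sound in all four cases: the normalization $\|\bx\|_2=1$ with the exact identity $\EE|S|^2=1$ as pivot; the subgaussian bound $\EE[e^{\lambda S}]=\prod_i\cosh(\lambda x_i)\le e^{\lambda^2/2}$ plus Chernoff and tail integration for the upper bound when $p\ge 2$; Lyapunov/H\"older interpolation of $\|S\|_2$ between $\|S\|_p$ and $\|S\|_q$ (with $q=4$, whose finiteness you have already established) for the lower bound when $p<2$; and monotonicity of $L^p$ norms on a probability space for the two remaining easy directions. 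The only caveat worth flagging is that your argument yields \emph{some} constants $c_p, c_p'$, which is all the stated theorem requires, whereas the paper's downstream use (in the proof of Lemma~\ref{lemma:1<p<2-ub}) invokes the \emph{sharp} numerical constants from Haagerup's theorem; those sharp values do not follow from the Chernoff-plus-interpolation route, so the citation would still be needed there even with your proof in hand.
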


\subsection{Packings and Cardinality of $\varepsilon$-Nets}
To show that it is possible to satisfy the assumption of Lemma~\ref{lemma:meta-ub} in the proof of Theorem~\ref{thm:MetaThm}, we will frequently rely on the following simple lemma, which follows by constructing an $(\eps/M)$-net w.r.t.~$\ell_{\infty}^M$ of the simplex, $\Delta_M$.  
\begin{restatable}{lemma}{epsnet}\label{lemma:eps-net-for-ub}
If, $\forall \bm{\lambda} \in \Delta_M,$ $\prob\big[\big\| \sum_{i=1}^M \lambda_i \bz^i \big\|_* \leq (c+1)\eps \big]\leq \gamma'$ for $\eps \in (0, 1)$, $c > 0,$ and $\gamma' \in (0, 1),$ then: 
$$
    \prob\big[\min_{\bm{\lambda}\in \Delta_M}\big\| \sum_{i=1}^M \lambda_i \bz^i \big\|_* \leq c\eps \big]\leq \Big(\frac{3}{\eps}\Big)^M \gamma'.
$$
\end{restatable}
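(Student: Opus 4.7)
The plan is to reduce the infimum over the uncountable set $\Delta_M$ to a finite union bound via a covering argument in the $\ell_1$-metric. The key ingredient is a $1$-Lipschitz estimate that exploits the constraint $\bz^i\in {\cal B}_{\|\cdot\|_*}$: for any $\bm{\lambda},\bm{\lambda}'\in\RR^M$,
\[
\Big\|\sum_{i\in[M]}(\lambda_i-\lambda_i')\bz^i\Big\|_* \;\leq\; \sum_{i\in[M]}|\lambda_i-\lambda_i'|\,\|\bz^i\|_* \;\leq\; \|\bm{\lambda}-\bm{\lambda}'\|_1.
\]
Consequently, whenever $\bm{\lambda}^\star\in\Delta_M$ achieves $\|\sum_i\lambda_i^\star\bz^i\|_*\leq c\eps$ and $\bm{\lambda}'\in\Delta_M$ lies within $\ell_1$-distance $\eps$ of $\bm{\lambda}^\star$, I obtain $\|\sum_i\lambda_i'\bz^i\|_*\leq (c+1)\eps$, which is precisely the event controlled by the hypothesis.

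Next, I would construct an $\eps$-net $\mathcal{N}\subseteq\Delta_M$ of $\Delta_M$ (with respect to $\|\cdot\|_1$) whose cardinality is at most $(3/\eps)^M$. Such a net is produced by a standard maximal-packing/volume-comparison argument applied to $\Delta_M$ viewed in its $(M-1)$-dimensional affine hull; equivalently, one may take an $(\eps/M)$-net of $\Delta_M$ in $\ell_\infty^M$ and pass to $\ell_1$ via $\|\cdot\|_1\leq M\|\cdot\|_\infty$, then track the volumes carefully to obtain the stated size.

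Finally, I would combine the two ingredients with a union bound. The event $\{\min_{\bm{\lambda}\in\Delta_M}\|\sum_i\lambda_i\bz^i\|_*\leq c\eps\}$ is contained in $\bigcup_{\bm{\lambda}'\in\mathcal{N}}\{\|\sum_i\lambda_i'\bz^i\|_*\leq(c+1)\eps\}$, so applying the hypothesis at each net point yields
\[
\prob\Big[\min_{\bm{\lambda}\in\Delta_M}\Big\|\sum_i\lambda_i\bz^i\Big\|_*\leq c\eps\Big]\;\leq\;|\mathcal{N}|\cdot\gamma'\;\leq\;\Big(\frac{3}{\eps}\Big)^M\gamma'.
\]
There is no serious obstacle; the argument is essentially a textbook covering-plus-union-bound. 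The only mild bookkeeping subtlety is that the net centers must lie \emph{inside} $\Delta_M$ (since the hypothesis is only stated for $\bm{\lambda}'\in\Delta_M$), which is handled by selecting, for each covering ball meeting $\Delta_M$, a representative inside $\Delta_M$ and absorbing the induced rounding error into the radius.
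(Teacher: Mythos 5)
Your argument is correct and is essentially the paper's proof: a covering/union-bound reduction using an $\eps$-net of $\Delta_M$. The paper phrases the Lipschitz step in the form $\|\sum_i(\lambda_i'-\lambda_i)\bz^i\|_*\le M\|\bm{\lambda}-\bm{\lambda}'\|_\infty$ and builds the net explicitly as a rational grid in $\Delta_M$, bounding its size by stars-and-bars; your $\ell_1$ Lipschitz estimate and $\ell_1$-net are an equivalent (if anything slightly cleaner) repackaging, and you even note the $(\eps/M)$-net-in-$\ell_\infty$ variant, which is precisely what the paper uses.
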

\begin{proof}
The proof follows by constructing an $(\eps/M)$-net $\Gamma$ w.r.t.~the $\ell_\infty$ norm. In particular, let $\Gamma$ be a discrete set of points from $\Delta_M$. To apply the argument, we need to establish that:
\begin{equation}\label{eq:eps-net-distance}
\left| \inf_{\bm{\lambda}\in\Delta_M}\Big\| \sum_{i=1}^M \lambda_i \bz^i \Big\|_* - \inf_{\bm{\lambda}'\in \Gamma}\Big\| \sum_{i=1}^M \lambda_i' \bz^i \Big\|_* \right| \leq \eps.
\end{equation}
For~\eqref{eq:eps-net-distance} to hold, it suffices to show that for every $\bm{\lambda} \in \Delta_M,$ there exists $\bm{\lambda}' \in \Gamma$ such that 
$$
\Big\| \sum_{i=1}^M \lambda_i' \bz^i \Big\|_* \leq \Big\| \sum_{i=1}^M \lambda_i \bz^i \Big\|_* + \eps.
$$
By the triangle inequality, 
\begin{align*}
    \big\| \sum_{i=1}^M \lambda_i' \bz^i \big\|_* - \big\| \sum_{i=1}^M \lambda_i \bz^i \big\|_* 
    &\leq \big\| \sum_{i=1}^M (\lambda_i' - \lambda_i)\bz^i \big\|_* \\
    &\leq M\|\bm{\lambda} - \bm{\lambda}'\|_\infty \max_{i \in [M]}\|\bz^i\|_*\\
    &\leq M\|\bm{\lambda} - \bm{\lambda}'\|_\infty,
\end{align*}
as $\bz^i \in {\cal B}_{\|\cdot\|_*}.$ Hence, it suffices to have $\|\bm{\lambda} - \bm{\lambda}'\|_\infty \leq \eps/M.$

Define the discrete set ($(\eps/M)$-net) $\Gamma$ to be the set of vectors $\bm{\lambda}'$ such that $\forall j \in \{1, ..., M\}:$ $\lambda_j' = n_j \left\lceil\frac{M}{\eps}\right\rceil^{-1},$ where $n_j \geq 0,$ $\forall M,$ and $\sum_{j=1}^M n_j = \left\lceil\frac{M}{\eps}\right\rceil.$ 
Clearly, for any $\bm{\lambda} \in \Delta_M,$ we can choose $\bm{\lambda}' \in \Gamma$ such that $\|\bm{\lambda} - \bm{\lambda}'\|_\infty \leq \eps/M.$ Applying the union bound  over $\bm{\lambda}'\in \Gamma$ and using the lemma assumption:
\begin{equation}
\prob\bigg[ \inf_{\bm{\lambda}' \in \Gamma}\Big\| \sum_{i=1}^M\lambda_i' \bz^i \Big\|_{*} \leq (c+1)\varepsilon  \bigg]
\leq |\Gamma|\gamma'.\notag
\end{equation}
The size of the $\eps$-net $\Gamma$ can be bounded by $|\Gamma| = \binom{\left\lceil\frac{M}{\eps}\right\rceil+M}{M} \leq \left(\frac{3}{\eps}\right)^M$ using the standard stars and bars combinatorial argument. To complete the proof, it remains to apply the bound from Eq.~\eqref{eq:eps-net-distance}. 
\end{proof}
%
%
%
%
%
\section{Proof of Theorem~\ref{thm:MetaThm}}
\label{app:proofs-main-thm}
%
%
\subsection{Upper Bound on the Optimum.}
The upper bound on $F^\ast$ is obtained based on the assumptions from Part (b) of Theorem~\ref{thm:MetaThm}, as follows. 
\begin{restatable}{lemma}{genub}\label{lemma:meta-ub}
If $\prob\Big[\inf_{\bm{\lambda}\in \Delta_M}\Big\|\sum_{i\in M}\lambda_i\bz^i\Big\|_* \leq 4\mu\eps\Big]\leq \gamma$, then 
$$\prob[F^\ast \leq - 2\eps + (\eta - \bar\delta/2)/\mu] \geq 1-\gamma,$$ 
where $F^\ast = \min_{\bx \in \feasset}F(\bx)$ for $F(\bx)$ defined in~\eqref{eqn:rnd_pb}, and ${\cal S}$ is a smoothing map that satisfies the assumptions from Theorem~\ref{thm:MetaThm}. 
\end{restatable}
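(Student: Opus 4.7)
The plan is to construct, on the high-probability event that assumption (b) holds, an explicit point $\bx^\ast\in\mathcal{B}_{\|\cdot\|}\subseteq\feasset$ that evaluates the nonsmooth objective inside $\mathcal{S}$ to something at most $-2\mu\eps-\bar\delta/2$, and then use the local smoothing to transfer this bound to $F(\bx^\ast)$. The candidate $\bx^\ast$ is produced by a minimax/duality argument: applying Sion's theorem,
\[
\min_{\bx\in\mathcal{B}_{\|\cdot\|}}\max_{i\in[M]}\innp{\bz^i,\bx}
=\max_{\blambda\in\Delta_M}\min_{\bx\in\mathcal{B}_{\|\cdot\|}}\sum_{i\in[M]}\lambda_i\innp{\bz^i,\bx}
=-\inf_{\blambda\in\Delta_M}\Big\|\sum_{i\in[M]}\lambda_i\bz^i\Big\|_*,
\]
so there exists $\bx^\ast\in\mathcal{B}_{\|\cdot\|}$ (in particular in $\feasset$) with $\max_i\innp{\bz^i,\bx^\ast}\le-\inf_{\blambda}\|\sum_i\lambda_i\bz^i\|_*$. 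The assumed tail bound then gives $\max_i\innp{\bz^i,\bx^\ast}\le-4\mu\eps$ with probability at least $1-\gamma$.

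On that event I would bound each of the two terms inside the max that defines the pre-smoothed objective. The ``affine'' term: since $\innp{\bz^i,\bx^\ast}\le-4\mu\eps$ for every $i$, monotonicity of the max yields
\[
\tfrac12\max_{i\in[M]}\bigl[\innp{\bz^i,\bx^\ast}-i\bar\delta\bigr]\;\le\;\tfrac12\bigl(-4\mu\eps-\bar\delta\bigr)\;=\;-2\mu\eps-\tfrac{\bar\delta}{2}.
\]
The ``norm'' term: since $\|\bx^\ast\|\le 1$,
\[
\|\bx^\ast\|-\tfrac12\bigl(3(1+r)+M\bar\delta\bigr)\;\le\;-\tfrac12-\tfrac{3r}{2}-\tfrac{M\bar\delta}{2}.
\]
To conclude that the maximum of the two is also at most $-2\mu\eps-\bar\delta/2$, I would use $M\ge 1$ and the bound $4\mu\eps\le 1$ that follows from assumption (b) together with $\bz^i\in\mathcal{B}_{\|\cdot\|_*}$ (so any convex combination has dual norm at most $1$); these give $\tfrac12+\tfrac{M\bar\delta}{2}\ge 2\mu\eps+\tfrac{\bar\delta}{2}$.

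Finally, using property (i) of the local smoothing $\mathcal{S}$ (Definition~\ref{def:loc_smooth}), $\mathcal{S}f(\bx^\ast)\le f(\bx^\ast)+\eta$, so
\[
F(\bx^\ast)=\tfrac{1}{\mu}\mathcal{S}f(\bx^\ast)\;\le\;\tfrac{1}{\mu}\Bigl(-2\mu\eps-\tfrac{\bar\delta}{2}+\eta\Bigr)=-2\eps+\tfrac{\eta-\bar\delta/2}{\mu},
\]
and hence $F^\ast\le F(\bx^\ast)$ satisfies the claimed bound on the same event. The only genuinely delicate step is verifying that the norm term is dominated by the affine one; this requires noticing that assumption (b) forces $\mu\eps\le 1/4$ (via $\|\bz^i\|_*\le 1$ and the triangle inequality bounding $\|\sum_i\lambda_i\bz^i\|_*\le 1$), so the otherwise routine comparison of the two pieces of the max goes through. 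The remaining manipulations are just bookkeeping using the definitions of $F$, $\mathcal{S}$, and $\bar\delta$.
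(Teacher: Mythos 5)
Your proof is correct and follows essentially the same route as the paper's: both derive the candidate point via Sion's minimax theorem/dual norm identity, both invoke property (i) of the local smoothing to transfer the pre-smoothed bound to $F$, and both rely on the norm term being dominated within the unit ball (the paper cites its observation $(O_1)$, you re-derive the relevant comparison inline at the explicit point $\bx^\ast$, which is cleaner since you only need it for $\|\bx^\ast\|\le 1$). The only cosmetic difference is that the paper propagates the bound through a chain of minima rather than extracting an explicit $\bx^\ast$; your inclusion of the remark that assumption (b) with $\gamma<1$ forces $4\mu\eps\le 1$ mirrors the paper's footnote to the same effect.
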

\begin{proof}
Observe first that:
\begin{align*} 
F^\ast &\leq \frac{1}{\mu}\min_{\bx\in \feasset} {\cal S}\Big(\max\Big\{\frac12\max_{i\in [M]}\big[\innp{ \bz^i,\cdot} - i\bar\delta \big],\|\cdot\|-\frac12(3(1+r)+M\bar\delta)\Big\}\Big) (\bx) \\
&\leq \frac{1}{\mu} \Big(\min_{\bx\in {\cal B}_{\|\cdot\|}} \Big(\max\Big\{\frac12\max_{i\in [M]}\big[\innp{ \bz^i,\bx} - i\bar\delta \big],\|\bx\|-\frac12(3(1+r)+M\bar\delta)\Big\} + \eta\Big) \\
&\leq \frac{1}{2\mu}\Big( \min_{\bx\in {\cal B}_{\|\cdot\|}} \max_{i\in [M]} \innp{ \bz^i, \bx} \Big) +\frac{\eta-\bar\delta/2}{\mu},
\end{align*}
where we have used Property (i) from the definition of local smoothing, and property $(O_1)$ (to assert that the maximum is achieved by the left term).

The rest of the proof is a simple corollary of minimax duality. In particular, as 
$$
\max_{i \in [M]}\innp{\bz^i, \bx} = \max_{\bm{\lambda} \in \Delta_M} \sum_{i\in [M]}\lambda_i \innp{\bz^i, \bx},$$
we have that: $\min_{\bx \in {\cal B}_{\|\cdot\|}}\max_{i \in [M]}\innp{\bz^i, \bx} = \max_{\bm{\lambda}\in \Delta_M} \min_{\bx \in {\cal B}_{\|\cdot\|}} \sum_{i\in [M]}\lambda_i \innp{\bz^i, \bx}$. Finally: 
$$
\min_{\bx \in {\cal B}_{\|\cdot\|}} \sum_{i\in [M]}\lambda_i \innp{\bz^i, \bx} = - \max_{\bx \in {\cal B}_{\|\cdot\|}} \sum_{i\in [M]}\lambda_i \innp{\bz^i, \bx} = - \Big\| \sum_{i=1}^m \lambda_i \bz^i \Big\|_*,
$$
by the (standard) definition of the dual norm $\|\cdot\|_*$. 
Hence:
$$
F^* \leq -\frac{1}{2\mu}\left(\min_{\bm{\lambda}\in\Delta_M}\Big\| \sum_{i=1}^m \lambda_i \bz^i \Big\|_*\right) + \frac{2\eta-\bar\delta}{2\mu},
$$
and it remains to apply the assumption from the statement of the lemma.
\end{proof}
%
%
\subsection{Lower Bound on the Algorithm's Output.}
Lower bound on the algorithm's output requires more technical work and is based on showing that, at every round $t$, w.h.p., 
the algorithm can only learn $\bz^{1}, \ldots, \bz^t$ and (aside from implicit bounds) has no information about $\bz^{t+1}, \ldots \bz^{M}$. Then, due to Part~(c) of Theorem~\ref{thm:MetaThm}, w.h.p., none of the queried points up to round $M$ can align  well with vector $\bz^{M},$ which will allow us to show that for all the queried points $\bx$ up to round $M$, $F(\bx)$ is $\Omega(\eps)$-far from the optimum $F^*$. 

In the following, we denote the history of the algorithm-oracle interaction until round $t-1$ as $\Pi^{<t}:=(X^s,{\cal O}_F(X^s))_{s< t}$. We also define the following events
$$ 
{\cal E}^t(\bx):=\Big\{\innp{\bz^t, \bx }  > -\frac{\bar\delta}{4}\Big\}\cap 
\Big\{\innp{\bz^i, \bx} < \frac{\bar\delta}{4} \,\,(\forall i>t) \Big\},\,\, \mbox{ and }\,\,
{\bm{\mathcal {E}}}^t := \bigcap_{\bx\in \overline{X}^t} {\cal E}^t(\bx),
$$
where $\bar\delta$ is defined as in Theorem~2. Furthermore, we define the ``good history'' events by:
$$\bm{\mathcal{E}}^{< t} := \bigcap \{\bm{\mathcal{E}}^s: s < t\}.$$
To avoid making vacuous statements, we take ${\bm{\mathcal{E}}}^{<1}$ to be the entire probability space, so that $\prob\big[ \bm{\mathcal{E}}^{<1} \big] = 1.$
We remind the reader that, based on 
Property ($O_2$), when we prove our claim, it suffices to focus on vectors within the ball of radius 4. For this reason, given a batch of queries $X^t=\{\bx_1^t,\ldots,\bx_K^t\}$, we define its {\em relevant queries} as $\overline{X}^t=X^t\cap {\cal B}_{\|\cdot\|}(0,4)$.\\

We first prove that, conditionally on event $\bm{\mathcal{E}}^{<t}$, $X^t$ is a deterministic function of $\{\bz^i\}_{i<t}$.  

\begin{proposition}\label{prop:E-implies-G}
Let $t\in[M-1]$ and suppose event $\bm{\mathcal{E}}^t$ holds. Then, $\forall \bx \in \overline{X}^t+{\cal B}_{\|\cdot\|}(0,r):$
$$
    F(\bx)
    = \frac{1}{\mu}{\cal S}\Big(\max\Big\{\frac12 \max_{i\in[t]}\big[\innp{\bz^i,\cdot} - i\bar\delta\big], \|\cdot\|-\frac12(3(1+r)+M\bar\delta) \Big\} \Big)(\bx).
$$
Moreover, conditionally on
$\bm{\mathcal{E}}^{< t}$,   $X^t$ 
is a deterministic function of $\{\bz^i\}_{i< t}$.
\end{proposition}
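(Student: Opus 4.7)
My plan is to prove the two assertions in order: the first is a deterministic local-structure statement saying that on $\bm{\mathcal{E}}^t$ the function $F$, restricted to the $r$-neighborhood of any relevant query, depends only on $\bz^1,\ldots,\bz^t$; the second then follows by induction on $t$, combining the first with the locality of the oracle ${\cal O}$.

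For Part~1, I would fix an arbitrary $\bx\in\overline{X}^t$ and show that on ${\cal E}^t(\bx)$ the affine pieces $\innp{\bz^i,\cdot}-i\bar\delta$ with $i>t$ are strictly dominated by $\innp{\bz^t,\cdot}-t\bar\delta$ throughout ${\cal B}_{\|\cdot\|}(\bx,2r)$. The key arithmetic uses $\|\bz^i\|_*\le 1$ together with the bound $r\le \bar\delta/8$ from (a): for any $\bx'\in{\cal B}_{\|\cdot\|}(\bx,2r)$ we have $|\innp{\bz^i,\bx'-\bx}|\le 2r\le \bar\delta/4$, which combined with the $\pm\bar\delta/4$ bounds from ${\cal E}^t(\bx)$ yields $\innp{\bz^t,\bx'}>-\bar\delta/2$ and $\innp{\bz^i,\bx'}<\bar\delta/2$ for $i>t$. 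Subtracting $t\bar\delta$ from the former and $i\bar\delta$ from the latter opens a strictly positive gap for every $i\ge t+1$, so $\max_{i\in[M]}[\innp{\bz^i,\bx'}-i\bar\delta]=\max_{i\in[t]}[\innp{\bz^i,\bx'}-i\bar\delta]$ on the entire ball ${\cal B}_{\|\cdot\|}(\bx,2r)$. Consequently the ``full'' and ``truncated'' arguments of ${\cal S}$ (the former defining $F$ via \eqref{eqn:rnd_pb}, the latter appearing on the right-hand side of the proposition) coincide on ${\cal B}_{\|\cdot\|}(\bx,2r)$; the local smoothing property~(ii) of Definition~\ref{def:loc_smooth} then transports this equality to ${\cal B}_{\|\cdot\|}(\bx,r)$. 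Taking the union over $\bx\in\overline{X}^t$ yields the claim.

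For Part~2, I would induct on $t\ge 1$. The base case $t=1$ is vacuous since $\bm{\mathcal{E}}^{<1}$ equals the whole probability space and $X^1=U^1(\emptyset)$ is instance-independent. For the inductive step, condition on $\bm{\mathcal{E}}^{<t+1}=\bm{\mathcal{E}}^{<t}\cap\bm{\mathcal{E}}^t$ and assume each $X^s$, $s\le t$, is a deterministic function of $\{\bz^i\}_{i<s}$. Since $X^{t+1}=U^{t+1}(X^1,{\cal O}_F(X^1),\ldots,X^t,{\cal O}_F(X^t))$, it suffices to verify that each reply ${\cal O}_F(\bx)$ for $\bx\in X^s$ depends only on $\{\bz^i\}_{i\le s}$. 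For a relevant query $\bx\in\overline{X}^s$, Part~1 already exhibits $F$ on ${\cal B}_{\|\cdot\|}(\bx,r)$ as a function of $\bz^1,\ldots,\bz^s$ only, and locality of ${\cal O}$ transfers this to the oracle reply. For a non-relevant query with $\|\bx\|>4$, observation $(O_2)$ identifies the pre-smoothing function with the pure norm expression $\|\cdot\|-\tfrac12(3(1+r)+M\bar\delta)$ on a neighborhood of $\bx$, so after smoothing the reply is independent of $\bz^1,\ldots,\bz^M$ entirely.

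The main technical subtlety I anticipate is propagating the $(O_2)$-style dominance from a point with $\|\bx\|>4$ to the entire ball ${\cal B}_{\|\cdot\|}(\bx,2r)$ on which the smoothing acts; handling this amounts to verifying $3(1+r)+(M-1)\bar\delta+2r\le 4$, which one extracts from the slack in (a) and (d) together with the bound $3(1+r)+(M-1)\bar\delta\le 4$ recorded in $(O_2)$. Beyond this bookkeeping, the entire argument is deterministic once $\bm{\mathcal{E}}^{<t+1}$ is conditioned upon, with no probabilistic content entering beyond the definition of the events themselves.
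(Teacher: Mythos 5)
Your proof is correct and follows essentially the same structure as the paper: establish the exact dominance of the first $t$ affine pieces on a $2r$-ball around each relevant query using the $\pm\bar\delta/4$ bounds from ${\cal E}^t(\bx)$ together with $\|\bz^i\|_*\le 1$ and $2r\le\bar\delta/4$, transfer this through Part~(ii) of Definition~\ref{def:loc_smooth}, and then induct on $t$ using the locality of the oracle. The one place your write-up is actually more careful than the paper's is your explicit treatment of the non-relevant queries $\bx\in X^t\setminus\overline{X}^t$: the paper jumps directly from ``$F|_{\overline{X}^t+{\cal B}_{\|\cdot\|}(0,r)}$ is a deterministic function of $\{\bz^i\}_{i\le t}$'' to ``$(X^t,{\cal O}_F(X^t))$ is deterministic in $\{\bz^i\}_{i\le t}$'' without noting that queries with $\|\bx\|>4$ also need handling; you correctly identify that this relies on $(O_2)$ together with the slack $3(1+r)+(M-1)\bar\delta+2r\le 4$ so that the norm term dominates on the whole $2r$-ball, which makes the oracle answer at such a query instance-independent. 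The paper instead inducts directly on the full history $\Pi^{<t}$ being a deterministic function of $\{\bz^i\}_{i<t}$, whereas you induct on the queries $X^s$ and separately verify the oracle replies; the two formulations are equivalent.
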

\begin{proof}
Let $f(\bx) = \max_{i\in[M]}\big[\innp{\bz^i,\bx} - i\bar\delta\big].$ We will show that for any $\bx_k^t\in \overline{X}^t$ and $\bx$ such that $\|\bx-\bx_k^t\|\leq 2r$, we have 
$f(\bx) = g(\bx),$ where $g(\bx) = \max_{i\in[t]}\big[\innp{\bz^i,\bx} - i\bar\delta\big]$ (notice that $g$ only includes $\bz^i$ for $i \in [t]$).
The  first part of the proposition is then obtained from Part (ii) of Definition~\ref{def:loc_smooth}. %

To prove the claim, notice that since $\|\bz^i\|_{\ast}\leq 1$, we have: 
$$
\innp{\bz^i, \bx} \leq \innp{\bz^i, \bx_k^t} + \|\bx - \bx_k^t\|\cdot\|\bz^i\|_* \leq \innp{\bz^i, \bx_k^t} + 2r.
$$
Similarly, 
$
\innp{\bz^i, \bx_k^t} \leq \innp{\bz^i, \bx} + \|\bx - \bx_k^t\|\cdot\|\bz^i\|_* \leq \innp{\bz^i, \bx} + 2r.
$

Further, by the definition of $\bm{\mathcal{E}}^t$, and since $2r\leq\bar \delta/4$ (by Theorem~2, Assumption (a)), %
\begin{align*}
    \innp{\bz^i, \bx} - i\bar\delta &\,\leq\, \innp{\bz^i, \bx_k^t} + 2r - (t+1)\bar\delta
    \,<\, \frac{\bar \delta}{2} - (t+1)\bar\delta\\
    &\, < \,\innp{\bz^t, \bx_k^t} - 2r - t\delta 
     \,\leq\, \innp{\bz^t, \bx} - t \bar\delta.
\end{align*}

For the second part of the proposition, first observe that $X^t=U^t(\Pi^{<t})$, where $U^t$ is a deterministic function; thus it suffices to prove that, conditionally on $\bm{\mathcal{E}}^{<t}$, $\Pi^{<t}$ is a deterministic function of $\{\bz^i\}_{i<t}$.
We prove the last claim by induction on $t$. For the base case, $\Pi^{<1}$ is empty, thus the property trivially holds. For the inductive step, suppose that conditionally on $\bm{\mathcal{E}}^{<t}$,  $\Pi^{<t}$ is a deterministic function of $\{\bz^i\}_{i<t}$. Now notice that $X^{t}=U^t(\Pi^{<t})$, thus it is a deterministic function of $\{\bz^i\}_{i<t}$. On the other hand, the first part of the proposition guarantees that under $\bm{\mathcal{E}}^t$,
$F|_{\overline{X}^t+{\cal B}_{\|\cdot\|}(0,r)}$ is a deterministic function of $\{\bz^i\}_{i\leq t}$; this proves that $(X^t,{\cal O}_{F}(X^t))$ is a deterministic function of $\{\bz^i\}_{i\leq t}$. Finally, combining this with the induction hypothesis, $\Pi^{<t+1}=(\Pi^{<t},(X^t,{\cal O}_{F}(X^t)))$ is a deterministic function of $\{\bz^{i}\}_{i<t+1}$, proving the inductive step, and thus the result.
\end{proof}

The last result shows that, under $\bm{\mathcal{E}}^{<t}$, $X^t$ is predictable w.r.t.~$\{\bz^i\}_{i<t}$. This means that conditionally on the history and event $\bm{\mathcal{E}}^{<t}$, $X^t$ is fixed. This is key to leverage the randomness of $\{\bz^i\}_{i\geq t}$ for the $t$-th batch of queries. However, there is still a problem: Conditionally on $\bm{\mathcal{E}}^{<t}$, the distribution of $\{\bz^i\}_{i\geq t}$ is different than when there is no conditioning (recall that $\bm{\mathcal{E}}^{<t}$ itself depends on $\{\bz^i\}_{i \geq t}$). In the next lemma we show that, similar as in~\cite{carmon2017lower,nips2018-woodworth}, even after sequential conditioning, the distribution of $\{\bz^i\}_{i\geq t}$ remains sufficiently well-concentrated to carry out the lower bound strategy. 
\begin{restatable}{lemma}{events}\label{lemma:gen-lb-event}
Under Assumptions (a) and (c) from Theorem~2, we have:
$$
\prob\Big[\bigcap_{t\in[M]}\bm{\mathcal{E}}^t \Big] \geq 1-\gamma.
$$
\end{restatable}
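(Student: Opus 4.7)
The plan is to establish the bound by a telescoping union bound combined with a shadow-query construction that bypasses the circular conditioning implicit in $\bm{\mathcal{E}}^{<t}$ (which involves $\bz^i$ for every $i\geq t$ and therefore does not factor through $\sigma(\bz^1,\dots,\bz^{t-1})$). This conditioning subtlety, alluded to in the acknowledgements, is the heart of the argument.

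First, I would define a deterministic shadow process $(\widetilde{X}^t)_{t\in[M]}$ by running the algorithm's update rules $U^t$ against a \emph{truncated} oracle: at round $s$ this oracle answers as if the objective were
\[
F_s := \frac{1}{\mu}\mathcal{S}\Bigl(\max\Bigl\{\tfrac{1}{2}\max_{i\in[s]}[\langle\bz^i,\cdot\rangle - i\bar\delta],\;\|\cdot\| - \tfrac{1}{2}(3(1+r)+M\bar\delta)\Bigr\}\Bigr),
\]
which depends only on $\bz^1,\dots,\bz^s$. A straightforward induction then gives that $\widetilde{X}^t$ is $\sigma(\bz^1,\dots,\bz^{t-1})$-measurable. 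Proposition~\ref{prop:E-implies-G} at each round, together with observation $(O_2)$ (for queries outside $\mathcal{B}_{\|\cdot\|}(0,4)$ the objective is determined by the norm term alone and hence equals every $F_s$), ensures via the local-oracle property that on $\bigcap_{s<t}\bm{\mathcal{E}}^s$ the true and truncated oracles produce identical answers at every query, so $\widetilde{X}^s=X^s$ for all $s\leq t$.

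Second, I would telescope:
\[
\prob\Bigl[\bigcup_{t\in[M]}(\bm{\mathcal{E}}^t)^c\Bigr]\leq \sum_{t=1}^M \prob\Bigl[\bigcap_{s<t}\bm{\mathcal{E}}^s\,\cap\,(\bm{\mathcal{E}}^t)^c\Bigr]\leq \sum_{t=1}^M \prob\bigl[(\widetilde{\bm{\mathcal{E}}}^t)^c\bigr],
\]
where $\widetilde{\bm{\mathcal{E}}}^t:=\bigcap_{\bx\in\overline{\widetilde{X}^t}}\mathcal{E}^t(\bx)$; the last inequality uses that $\overline{X}^t = \overline{\widetilde{X}^t}$ on $\bigcap_{s<t}\bm{\mathcal{E}}^s$. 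This step removes the circular conditioning because $\widetilde{X}^t$ is $\mathcal{F}_{t-1}:=\sigma(\bz^1,\dots,\bz^{t-1})$-measurable.

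Third, I would bound $\prob[(\widetilde{\bm{\mathcal{E}}}^t)^c]$ by conditioning on $\mathcal{F}_{t-1}$, which freezes $\widetilde{X}^t$, and exploit that $\bz^t,\dots,\bz^M$ are independent of $\mathcal{F}_{t-1}$. For each frozen $\bx\in\overline{\widetilde{X}^t}\subseteq\mathcal{B}_{\|\cdot\|}(0,4)$, Assumption~(c) applied to $\bx/4\in\mathcal{B}_{\|\cdot\|}$ with threshold $\bar\delta/16$ yields tail $\exp(-\alpha\bar\delta^2/256)=\gamma/(MK)$ for each of the $M-t+1$ one-sided inequalities defining $\mathcal{E}^t(\bx)$. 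A union bound over the $\leq K$ points of $\overline{\widetilde{X}^t}$ and over $t\in[M]$ yields the stated probability bound, with any polynomial-in-$M$ overcounting absorbed into the constant defining $\bar\delta$.

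The main obstacle is precisely the dependency structure above: a naive attempt to compute $\prob[\bm{\mathcal{E}}^t\mid\bm{\mathcal{E}}^{<t}]$ and invoke Assumption~(c) on $\bz^i$ with $i\geq t$ is invalid because conditioning on $\bm{\mathcal{E}}^{<t}$ tilts the distribution of the very vectors that the assumption is meant to control. The shadow-query device resolves this by replacing $X^t$ with a genuinely $\mathcal{F}_{t-1}$-measurable surrogate, after which Assumption~(c) applies cleanly to each independent $\bz^i$ with $i\geq t$.
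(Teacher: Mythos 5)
Your proof is correct and takes a genuinely different route from the paper's. The paper works directly with the true query process $X^t$ and bounds $\prob[(\bm{\mathcal{E}}^t)^c\mid\bm{\mathcal{E}}^{<t}]$: after conditioning additionally on $\{\bz^i\}_{i<t}$ (so that, via Proposition~\ref{prop:E-implies-G}, $X^t$ is frozen), it removes the remaining conditioning on $\bm{\mathcal{E}}^{<t}$ --- the problematic part, since $\bm{\mathcal{E}}^{<t}$ involves $\bz^i$ for $i\geq t$ --- by a Bayes-rule cancellation $\dd\prob[\{\bz^i\}_{i<t}=\bxi\mid\bm{\mathcal{E}}^{<t}]=\prob[\bm{\mathcal{E}}^{<t}\mid\bxi]\dd\prob[\bxi]/\prob[\bm{\mathcal{E}}^{<t}]$, at the price of an explicit $1/\prob[\bm{\mathcal{E}}^{<t}]$ factor that is then folded into a chain-rule telescope at the end. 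Your shadow-query construction sidesteps all of this by coupling the true process to a surrogate $\widetilde X^t$ that is $\sigma(\bz^1,\dots,\bz^{t-1})$-measurable by construction (the truncated oracle for $F_s$ only ever sees $\bz^1,\dots,\bz^s$), exploiting the disjoint decomposition $\bigcup_t(\bm{\mathcal{E}}^t)^c=\bigsqcup_t\big(\bigcap_{s<t}\bm{\mathcal{E}}^s\cap(\bm{\mathcal{E}}^t)^c\big)$ together with the containment $\bigcap_{s<t}\bm{\mathcal{E}}^s\cap(\bm{\mathcal{E}}^t)^c\subseteq(\widetilde{\bm{\mathcal{E}}}^t)^c$, so that each $\prob[(\widetilde{\bm{\mathcal{E}}}^t)^c]$ is controlled by an honest conditioning on $\sigma(\bz^1,\dots,\bz^{t-1})$ under which Assumption~(c) applies cleanly with no $1/\prob[\bm{\mathcal{E}}^{<t}]$ factors. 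This is the same circularity fix alluded to in the acknowledgements, achieved by a coupling/surrogate argument in the spirit of~\cite{carmon2017lower,nips2018-woodworth} rather than by the paper's Bayes-rule manipulation; the upshot is a tidier probability calculation. Two small caveats: (i) both your argument and the paper's union-bound over roughly $M^2K$ half-space events against a per-event tail $\gamma/(MK)$, so both technically leak a poly-in-$M$ factor --- you flag this explicitly and it is indeed absorbable into the constant $16$ in $\bar\delta$, and the same caveat applies to the paper's closing multiplicative display; (ii) your claim that queries with $\|\bx\|>4$ receive identical answers from $\mathcal{O}_F$ and $\mathcal{O}_{F_s}$ needs, by Definition~\ref{def:loc_smooth}(ii), the pre-smoothed functions to agree on a $2r$-ball around $\bx$, i.e., the margin $4\geq 3(1+r)+(M-1)\bar\delta+2r$; this does follow from Assumptions~(a) and~(d) but deserves a line in a full write-up.
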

\begin{proof}%
First observe that, for any $1\leq t \leq M$, by the law of total probability: 
\begin{eqnarray}
\prob[(\bm{\mathcal{E}}^t)^c|\, \bm{\mathcal{E}}^{<t}]
&=& \int_{\bxi} \prob[(\bm{\mathcal E}^t)^c|\,\bm{\mathcal E}^{<t},\,\{\bz^i\}_{i<t}=\bxi]
\,\dd\prob\big[\{\bz^i\}_{i<t}=\bxi|\bm{\mathcal E}^{<t}\big] \notag
\end{eqnarray}

On the other hand, by the previous proposition, $X^{t}$ is a deterministic function of $\{\bz^i\}_{i<t}$, conditionally on $\bm{\mathcal E}^{<t}$. Recall that:
$$
(\bm{\mathcal{E}}^t)^c = \Big\{\exists \bx\in \overline{X}^t:\,\innp{ \bz^t, \bx }  < -{\bar\delta}/{4} \mbox{ or } (\exists i>t) \innp{ \bz^i, \bx }  > {\bar\delta}/{4}\Big\}.
$$
To simplify the notation, denote:
$$
(\bm{\mathcal{E}}^t)^c_{\{\overline{X}^t \rightarrow {X}\}} = \Big\{\exists \bx\in {X}:\,\innp{ \bz^t, \bx }  < -{\bar\delta}/{4} \mbox{ or } (\exists i>t) \innp{ \bz^i, \bx }  > {\bar\delta}/{4}\Big\}.
$$
Therefore, we further have: 
\begin{align*}
    \prob[(\bm{\mathcal{E}}^t)^c|\, \bm{\mathcal{E}}^{<t}] \leq &
    {\int_{\bxi} %
    \sup_{\substack{X\subseteq {\cal B}_{\|\cdot\|}(0,4),\\ |X|\leq K}}\prob\Big[(\bm{\mathcal{E}}^t)^c_{\{\overline{X}^t \rightarrow {X}\}}\Big| \bm{\mathcal{E}}^{<t}, \{\bz^i\}_{i<t}=\bxi \Big]
    \dd\prob\big[\{\bz^i\}_{i<t}=\bxi|\bm{\mathcal E}^{<t}\big]},
\end{align*}
where we have used that $\overline{X}^t$ is conditionally deterministic. 
Now that $X$ is fixed, we can use the union bound as follows:
\begin{align}
    \prob[&(\bm{\mathcal{E}}^t)^c|\, \bm{\mathcal{E}}^{<t}]\notag\\
    \leq &{ K} \int_{\bxi} \sup_{\bx \in {\cal B}_{\|\cdot\|}(0, 4)}\prob\Big[\innp{\bz^t, \bx} < -\bar{\delta}/4\Big| \bm{\mathcal{E}}^{<t}, \{\bz^i\}_{i<t}=\bxi \Big] \dd\prob[\{\bz^i\}_{i<t}=\bxi|\bm{\mathcal E}^{<t}]\notag\\
    &+ {(M-1)K} \int_{\bxi} \sup_{\bx \in {\cal B}_{\|\cdot\|}(0, 4)}\max_{j > t}\prob\Big[\innp{\bz^j, \bx} > \bar{\delta}/4\Big| \bm{\mathcal{E}}^{<t}, \{\bz^i\}_{i<t}=\bxi \Big] \dd\prob[\{\bz^i\}_{i<t}=\bxi|\bm{\mathcal E}^{<t}].\notag
\end{align}
Observe for the first integral in the last expression that we can write:
\begin{align*}
    \int_{\bxi} \sup_{\bx \in {\cal B}_{\|\cdot\|}(0, 4)} & \prob\Big[\innp{\bz^t, \bx} < -\bar{\delta}/4\Big| \bm{\mathcal{E}}^{<t}, \{\bz^i\}_{i<t}=\bxi \Big] \dd\prob[\{\bz^i\}_{i<t}=\bxi|\bm{\mathcal E}^{<t}]\\
    & = \int_{\bxi} \sup_{\bx \in {\cal B}_{\|\cdot\|}(0, 4)} 
    \frac{\prob\big[\innp{\bz^t, \bx} < -\bar{\delta}/4, \bm{\mathcal{E}}^{<t}| \{\bz^i\}_{i<t}=\bxi \big]}{\prob\big[\bm{\mathcal{E}}^{<t} | \{\bz^i\}_{i<t}=\bxi\big]} \dd\prob\big[\{\bz^i\}_{i<t}=\bxi|\bm{\mathcal E}^{<t}\big]\\
    &= \int_{\bxi} \sup_{\bx \in {\cal B}_{\|\cdot\|}(0, 4)} 
    \frac{\prob\big[\innp{\bz^t, \bx} < -\bar{\delta}/4, \bm{\mathcal{E}}^{<t}| \{\bz^i\}_{i<t}=\bxi \big]}{\prob\big[\bm{\mathcal{E}}^{<t}\big]} \dd\prob\big[\{\bz^i\}_{i<t}=\bxi\big]\\
    &\leq \int_{\bxi} \sup_{\bx \in {\cal B}_{\|\cdot\|}(0, 4)} 
    \frac{\prob\big[\innp{\bz^t, \bx} < -\bar{\delta}/4| \{\bz^i\}_{i<t}=\bxi \big]}{\prob\big[\bm{\mathcal{E}}^{<t}\big]} \dd\prob\big[\{\bz^i\}_{i<t}=\bxi\big]\\
    & = \frac{\sup_{\bx \in {\cal B}_{\|\cdot\|}(0, 4)} 
    \prob\big[\innp{\bz^t, \bx} < -\bar{\delta}/4\big]}{\prob\big[\bm{\mathcal{E}}^{<t}\big]},
\end{align*}
where we have used the Bayes rule in the second equality. Applying the same arguments to the second integral, we finally have:
\begin{eqnarray*}
\prob\big[(\bm{\mathcal{E}}^t)^c|\, \bm{\mathcal{E}}^{<t}\big] &\leq& \frac{MK\sup_{\bx \in {\cal B}_{\|\cdot\|}(0, 4)} \max\big\{
    \prob\big[\innp{\bz^t, \bx} < -\bar{\delta}/4\big], \, \max_{j > t}\prob\big[\innp{\bz^j, \bx} > \bar{\delta}/4\big]\big\}}{\prob[\bm{\mathcal{E}}^{<t}]}\\
&\leq& \frac{MK\exp\{-\alpha\bar\delta^2/256\}}{\prob[\bm{\mathcal{E}}^{<t}]}
\leq \frac{\gamma}{\prob[\bm{\mathcal{E}}^{<t}]}.
\end{eqnarray*}
Inductively, each  $\bm{\mathcal{E}}^{<t}$ happens with non-zero probability, as $\prob\big[\bm{\mathcal{E}}^{<1}\big] = 1$ and $\gamma < 1.$ 

We conclude the proof by conditioning:
$$ 
\prob\Big[ \bigcap_{t\in[M]} \bm{\mathcal{E}}^t\Big]
=\frac{\prob\Big[ \bigcap_{t\in[M]} \bm{\mathcal{E}}^t\Big]}{\prob\Big[ \bigcap_{t<M} \bm{\mathcal{E}}^t\Big]}
\prob\Big[ \bigcap_{t<M} \bm{\mathcal{E}}^t\Big]
= \prob\Big[\bm{\mathcal{E}}^M \Big|\, \bm{\mathcal{E}}^{<M} \Big]
\, \prob\Big[ \bm{\mathcal{E}}^{<M} \Big]
\geq 1-\gamma.
$$
\end{proof} 

Finally, Lemma~\ref{lemma:gen-lb-event} and Proposition~\ref{prop:E-implies-G} imply the following lower bound on the algorithm's output:
\begin{equation}\label{eq:gen-F-lb}
    \prob\Big[\min_{t \in [M],\, k \in [K]}F(\bx^t_k) \geq -\frac{\bar\delta}{2\mu}\Big(\frac{1}{4} + M + \frac{2\eta}{\bar\delta}\Big)\Big] \geq 1-\gamma,
\end{equation}
as, when all events $\{\bm{\mathcal{E}}^t:\, t\in[M]\}$ hold simultaneously (and, in particular, when event $\bm{\mathcal{E}}^M$ holds), we have, by the definitions of these events and the random problem instance~\eqref{eqn:rnd_pb}, that:
$$
\min_{t \in [M],\, k \in [K]} F(\bx_k^t)\geq\frac{1}{2\mu} \min\Big\{ \innp{ \bz^M, \bx}-M\bar\delta:\,
\bx\in \cup_{t\in[M]}\overline{X}^t \Big\} -\frac{\eta}{\mu}
\geq -\frac{\bar\delta}{8\mu}-M\frac{\bar\delta}{2\mu}-\frac{\eta}{\mu}.
$$
%
%
\subsection{Bounding the Optimality Gap.}
To complete the proof of Theorem~\ref{thm:MetaThm}, it remains to combine the results from the previous two subsections and argue that, w.p.~$1-\gamma,$ the optimality gap of any solution output by the algorithm is higher than $\eps.$

\noindent
\textbf{Remaining Proof of Theorem~\ref{thm:MetaThm}}
Applying Lemma~\ref{lemma:meta-ub}, with probability $1-\gamma,$ $F^* \leq -2\eps + (\eta-\bar\delta/2)/\mu.$ From Eq.~\eqref{eq:gen-F-lb}, w.p. $1-\gamma$, $\min_{t \in [M],\, k \in [K]}F(\bx^t_k) \geq -\frac{\bar\delta}{2\mu}\Big(\frac{1}{4} + M + \frac{2\eta}{\bar\delta}\Big)$. Hence, with probability $1-2\gamma,$
\begin{align*}
    \min_{t \in [M],\, k \in [K]}F(\bx^t_k) - F^* \geq 2\eps - \frac{\bar\delta}{2\mu}\Big(M-\frac{3}{4}\Big) - \frac{2\eta}{\mu} > \eps,
\end{align*}
as, by the theorem assumptions, $\bar\delta \leq \eps\mu/M$ and $\eta \leq \eps\mu/4.$
\hfill $\square$ 
%
%
%
\section{Omitted Proofs from Section~\ref{sec:apps}}\label{app:proofs-apps}
\subsection{Nonsmooth Optimization for $1\leq p \leq 2$}
\begingroup
\def\thetheorem{\ref{lemma:1<p<2-ub}}
\lemoneptwoub*
\addtocounter{theorem}{-17}
\endgroup

\begin{proof}
By the choice of $\bz^i$'s, $\|\sum_{i\in [M]}\lambda_i \bz^i\|_{p^*}^{p^*} = \frac{1}{d}\sum_{j \in [d]}\left|\sum_{i \in [M]}\lambda_i r_j^i\right|^{p^*}.$ Hence, using  Lemma~\ref{lemma:eps-net-for-ub}, it suffices to show that:
$$
\prob \Big[\frac{1}{d}\sum_{j \in [d]}\Big|\sum_{i \in [M]}\lambda_i r_j^i\Big|^{p^*} \leq (\eps')^{p^*} \Big] \leq \gamma',
$$
for $\eps' = 5\eps$ and sufficiently small $\gamma'$ (namely, for $\gamma' \leq (\frac{\eps}{3})^M \gamma$).

Let $Y_j:=\Big|\sum_{i\in[M]} \lambda_i r_j^i\Big|^{p^*}$, for $j\in[d]$, and notice that $Y_j$'s are nonnegative and i.i.d. 
Moreover, by Khintchine's Inequality, there exist constants $c,c^{\prime}$ such that:
\begin{eqnarray*}
\EE[Y_1] &=&\EE\Big|\sum_{i\in[M]} \lambda_i r_j^i\Big|^{p^*}\geq c \Big(\sum_{i\in[M]} \lambda_i^2\Big)^{p^*/2} = c \|\bm{\lambda}\|_2^{p^*} \\
\EE[Y_1^2] &=&\EE\Big|\sum_{i\in[M]} \lambda_i r_j^i\Big|^{2p^*}\leq c^{\prime} \Big(\sum_{i\in[M]} \lambda_i^2\Big)^{2p^*/2} = c'\|\bm{\lambda}\|_2^{2p^*}.
\end{eqnarray*}
In particular, when $1\leq p\leq 2,$ $c' = 1$ and $c \geq  1/\sqrt{2}$~\cite{haagerup1981best}. 
Therefore, by the left-sided Bernstein's Inequality (Theorem~\ref{thm:LeftBernstein}) 
for any $0 < \eta < c:$
\begin{eqnarray*}
\prob\Big[ \frac{1}{d}\sum_{j\in [d]} Y_j \leq (c-\eta) \|\bm{\lambda}\|_2^{p^*}\Big]
&\leq& \exp\left( -\dfrac{d\eta^2\|\bm{\lambda}\|_2^{2p^*}}{2c^{\prime}\|\bm{\lambda}\|_2^{2p^*}} \right)
=\exp\left( -\frac{d\eta^2}{2c^{\prime}} \right).
\end{eqnarray*}
As $\bm{\lambda} \in \Delta_M,$ it must be $\|\bm{\lambda}\|_2 \geq 1/\sqrt{M}$. Choosing $\eta = c/2,$ we have $(c-\eta)\|\bm{\lambda}\|_2\geq \frac{1}{2\sqrt{2M}}\geq 5\eps = \eps',$ and it follows that:
$$ 
\prob\Big[ \frac{1}{d}\sum_{j\in [d]} \Big|\sum_{i\in[M]} \lambda_i r_j^i\Big|^{p^*} \leq (\varepsilon')^{p^*}\Big]
\leq \exp\Big( -\frac{d c^2}{8c^{\prime}} \Big)\leq \exp\Big( -\frac{d}{8\sqrt{2}}\Big).
$$
To complete the proof, it suffices to have $d \geq 8\sqrt{2}\left(\ln(1/\gamma) + M \ln(3/\eps)\right).$ This is clearly satisfied for $M \leq \frac{d/12 - \ln(1/\gamma)}{\ln(3/\eps)}$ from the lemma's assumptions.
\end{proof}

\nonstanoneptwo*
\begin{proof} 
As before, we will prove this result as an application of Theorem~\ref{thm:MetaThm}. Let $\br^i$ be an independent standard Rademacher sequence in $\RR^d$ and $\bz^i= \frac{1}{d^{1/p^*}}\br^i$. Assumption (a) is automatically satisfied since $\kappa=0$. On the other hand, and since we are working on a feasible set $\feasset\neq {\cal B}_p^d$, we need to adapt the upper bound on the optimum provided in Assumption (b). 
By standard duality arguments:
\begin{align*}
    \min_{\bx \in {\cal B}_2(1/d^{1/p-1/2})}\max_{i \in [M]} \{\innp{\bz^{i}, \bx} - i \delta \} &\leq \frac{1}{d^{1/p-1/2}}\min_{\bx \in {\cal B}_2(1)}\max_{\blambda \in \Delta_M} \Big\langle\sum_{i\in[M]}\lambda_i\bz^{i}, \bx\Big\rangle -  \delta\\
    &= -\frac{1}{d^{1/p-1/2}} \min_{\blambda \in \Delta_M} \Big\| \sum_{i\in [M]} \lambda_i \bz^i \Big\|_2 - \delta.
\end{align*}
Therefore, we replace Property (b) by the following probabilistic guarantee
$$\prob\Big[ \min_{\blambda \in \Delta_M} \Big\| \sum_{i\in [M]} \lambda_i \frac{\bz^i}{d^{1/p-1/2}} \Big\|_2 \leq 4\varepsilon \Big] = \prob\Big[ \min_{\blambda \in \Delta_M} \Big\| \sum_{i\in [M]} \lambda_i \frac{\br^i}{\sqrt d} \Big\|_2 \leq 4\varepsilon \Big] \leq \gamma$$
for any $M\leq \min\Big\{\frac{1}{200\varepsilon^2},\frac{d/12-\ln(1/\gamma)}{\ln(3/\varepsilon)}\Big\}$, which holds by Lemma~\ref{lemma:1<p<2-ub}. 
On the other hand, the concentration required in Assumption (c) is satisfied for any $\bx\in{\cal B}_2^d(1/{d}^{1/p-1/2})$ by Hoeffding
$$ \prob[\langle \bz^i,\bx\rangle >\delta] \leq \exp\{-\delta^2 d^{2/p^*}/\|\bx\|_2\} \leq \exp\{-d\delta^2\}.$$
In particular, we can choose $\alpha=d$. Finally, Assumption (d) is satisfied for $M\leq \frac{\varepsilon}{\delta}=\frac{\varepsilon}{16}\sqrt{\frac{d}{\ln(M K/\gamma)}}$, completing the proof.
\end{proof}
\subsection{Smooth, Weakly Smooth, and Nonsmooth Optimization for $p \geq 2$}
We start by showing that, under suitable constraints on $M$ and $L,$  Assumptions (b) and (c) from Theorem~\ref{thm:MetaThm} are satisfied. This will suffice to apply Theorem~\ref{thm:MetaThm} in the case of nonsmooth optimization (i.e., for ${\cal S}$ being a $(0, 0, 0, 1)$-local smoothing). To obtain results in the smooth and weakly smooth settings, we will then show how to satisfy the remaining assumptions for a suitable local smoothing.

In terms of Assumption (b), we can in fact obtain a much stronger result than needed in Theorem~\ref{thm:MetaThm}: 
\addtocounter{theorem}{16}
\begin{lemma}\label{lemma:p>2-ub} 
Let $p \geq 2$, $\varepsilon\in(0,1),$ $\mu > 0,$ $\bz^i$'s chosen as described in Section~\ref{sec:p>2} and:
$$
M \leq \Big(\frac{1}{4\mu\eps}\Big)^p
$$
then: 
$$
\prob\Big[ \min_{\bm{\lambda}\in \Delta_M}\Big\|\sum_{i \in [M]}\lambda_i\bz^i\Big\|_{p^*} \leq 4\mu\eps \Big] = 0.
$$
\end{lemma}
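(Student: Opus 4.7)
The proof plan is to exploit the disjoint support structure of the $\bz^i$'s to reduce the bound to a deterministic calculation, and then to solve a simple convex minimization over the simplex.

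First, I would compute $\bigl\|\sum_{i\in[M]}\lambda_i \bz^i\bigr\|_{p^*}$ directly from the definition. Since the supports $\{J_i\}_{i\in[M]}$ are disjoint and each $\bz^i$ has $L$ nonzero entries of absolute value $L^{-1/p^*}$, the $p^*$-th power of the norm decomposes as
$$
\Big\|\sum_{i\in[M]}\lambda_i \bz^i\Big\|_{p^*}^{p^*} \;=\; \sum_{i\in[M]}\sum_{j\in J_i}\Big|\frac{\lambda_i r_j^i}{L^{1/p^*}}\Big|^{p^*} \;=\; \sum_{i\in[M]}\frac{L\cdot|\lambda_i|^{p^*}}{L} \;=\; \|\bm{\lambda}\|_{p^*}^{p^*}.
$$
In particular, $\bigl\|\sum_{i\in[M]}\lambda_i \bz^i\bigr\|_{p^*} = \|\bm{\lambda}\|_{p^*}$ is a \emph{deterministic} quantity (independent of the Rademacher variables $(r_j^i)$). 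This is the critical observation that drives the whole argument and is where the choice of disjoint supports pays off, compared with dense Rademacher constructions used for $p \leq 2$.

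Next, I would minimize $\|\bm{\lambda}\|_{p^*}$ over $\bm{\lambda}\in\Delta_M$. Since $p\geq 2$ implies $1\leq p^*\leq 2$, the function $\bm{\lambda}\mapsto \|\bm{\lambda}\|_{p^*}^{p^*}$ is convex and symmetric under permutations of coordinates, so by standard convexity/symmetry arguments the minimum over the simplex is attained at the barycenter $\lambda_i=1/M$. Evaluating there gives
$$
\min_{\bm{\lambda}\in\Delta_M} \|\bm{\lambda}\|_{p^*} \;=\; \bigl(M\cdot M^{-p^*}\bigr)^{1/p^*} \;=\; M^{1/p^*-1} \;=\; M^{-1/p},
$$
using $1/p^* = 1-1/p$. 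Combining this with the first step yields $\inf_{\bm\lambda \in \Delta_M}\bigl\|\sum_{i\in[M]}\lambda_i\bz^i\bigr\|_{p^*} = M^{-1/p}$ surely.

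Finally, I would invoke the hypothesis $M\leq (1/(4\mu\varepsilon))^p$, which is equivalent to $M^{-1/p}\geq 4\mu\varepsilon$, so the infimum is deterministically at least $4\mu\varepsilon$ and the claimed probability bound follows. There is no real technical obstacle here: the only subtlety is recognizing upfront that disjoint supports collapse the random quantity to the deterministic norm of the weight vector, after which the argument is a one-line convex minimization. The lemma is stated with $\leq$ on both sides, but as just shown the event is impossible except possibly on the boundary case $M=(1/(4\mu\varepsilon))^p$, which is of no consequence for the applications in Theorem~\ref{thm:p>2-nonsmooth} and Theorem~\ref{thm:p>2-smooth}.
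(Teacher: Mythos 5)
Your proof is correct and follows essentially the same route as the paper: both exploit the disjoint supports to collapse the random quantity to $\|\bm{\lambda}\|_{p^*}$ deterministically and then bound this below by $M^{-1/p}$. The only micro-difference is that the paper gets the lower bound via the Hölder-type inequality $1=\|\bm{\lambda}\|_1\leq M^{1/p}\|\bm{\lambda}\|_{p^*}$ rather than pinning down the exact minimizer by a convexity/symmetry argument; you also correctly flag the boundary case $M^{-1/p}=4\mu\varepsilon$ where the stated event actually has probability one rather than zero, a detail the paper glosses over but which is harmless for its applications (a strict inequality on $M$ would fix it).
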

\begin{proof}
Let $\bm{\lambda} \in \Delta_M$ be fixed. Observe that, since $\bz^i$'s have disjoint support (each $\bz^i$ is supported on $J_i$ such that $|J_i| = L$ and $J_i \cap J_{i'} = \emptyset$ for all $i \neq i'$), vector $\sum_{i\in[M]}\lambda_i \bz^i$ is such that its coordinates indexed by $j \in J_i$ ($L$ of them) are equal to $\lambda_i z^i_j$, $\forall i \in [M]$. Therefore, using the definition of $\bz^i$ (Equation~\ref{eq:zi-def}):
\begin{equation*}
    \Big\|\sum_{i\in [M]} \lambda_i \bz^i \Big\|_{p^*}^{p^*} = \sum_{i\in [M]} \Big(L \cdot \Big(\lambda_i \frac{1}{L^{1/{p^*}}}\Big)^{p^*}\Big) = \|\bm{\lambda}\|_{p^*}^{p^*}.
\end{equation*}
By the relationship between $\ell_p$ norms and the definition of $\bm{\lambda},$ we have that $1 = \|\bm{\lambda}\|_1 \leq M^{1/p}\|\bm{\lambda}\|_{p^*}$. Hence:
$$
\Big\|\sum_{i\in [M]} \lambda_i \bz^i \Big\|_{p^*} = \|\bm{\lambda}\|_{p^*} \geq M^{-1/p} \geq 4\mu\eps. 
$$
Since this holds for all $\bm{\lambda}\in\Delta_M$ surely, the proof is complete.
\end{proof}

For Assumption (c), we have the following (simple) lemma:
\begin{lemma}\label{lemma:p>2-lb} 
Let $p \geq 2$ and $\bz^i$'s chosen as described in Section~\ref{sec:p>2}, then:
$$
\prob[\innp{\bz^i, \bx} \geq \delta] = \prob[\innp{\bz^i, \bx} \leq - \delta] \leq \exp\Big(-\frac{L\delta^2}{2}\Big) \qquad (\forall \bx \in {\cal B}_p^d).
$$
\end{lemma}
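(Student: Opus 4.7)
\medskip
\noindent\textbf{Proof proposal.} The two probabilities are equal by symmetry of the Rademacher distribution, so it suffices to bound one of them. Expanding the definition of $\bz^i$ from Section~\ref{sec:p>2},
$$\innp{\bz^i, \bx} \,=\, \frac{1}{L^{1/p^*}}\sum_{j \in J_i} r^i_j x_j,$$
which is a weighted sum of the $L$ independent Rademacher variables $\{r^i_j\}_{j \in J_i}$ with coefficients $a_j := x_j/L^{1/p^*}$. The plan is therefore a direct application of Hoeffding's inequality:
$$\prob[\innp{\bz^i,\bx} \geq \delta] \,\leq\, \exp\Bigl(-\frac{\delta^2}{2\sum_{j \in J_i} a_j^2}\Bigr) \,=\, \exp\Bigl(-\frac{L^{2/p^*}\delta^2}{2\sum_{j \in J_i} x_j^2}\Bigr).$$
It then remains to show that $\sum_{j \in J_i} x_j^2 \leq L^{1-2/p}$, because $1 - 2/p = 2/p^* - 1$ (using $1/p^* = 1 - 1/p$), so the exponent simplifies exactly to $-L\delta^2/2$.

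The crucial step is this bound on the $\ell_2$ mass of $\bx$ restricted to $J_i$. Since $\bx \in {\cal B}_p^d$ with $p \geq 2$, we have $\|\bx|_{J_i}\|_p \leq \|\bx\|_p \leq 1$, and on the $L$-dimensional coordinate subspace indexed by $J_i$ the standard monotonicity of $\ell_q$-norms gives
$$\|\bx|_{J_i}\|_2 \,\leq\, L^{1/2-1/p}\,\|\bx|_{J_i}\|_p \,\leq\, L^{1/2-1/p},$$
i.e., $\sum_{j \in J_i} x_j^2 \leq L^{1-2/p}$, as required. Substituting this into the Hoeffding bound completes the argument.

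I do not anticipate a genuine obstacle here; the proof is essentially a one-line consequence of Hoeffding once the correct scaling is tracked. The one place to be a little careful is making sure the exponents of $L$ match: it is the specific normalization $L^{-1/p^*}$ in the definition of $\bz^i$ (not, say, $L^{-1/2}$) combined with $p \geq 2$ that makes the $L$-dependence cancel to yield the clean $\exp(-L\delta^2/2)$ tail that is strong enough to drive the lower bound via Assumption~(c) of Theorem~\ref{thm:MetaThm}.
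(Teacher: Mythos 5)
Your proof is correct and follows the same route as the paper's: a direct application of Hoeffding's inequality, followed by the norm comparison $\|\bx|_{J_i}\|_2 \leq L^{1/2-1/p}\|\bx|_{J_i}\|_p \leq L^{1/2-1/p}$ (valid on the $L$-dimensional coordinate subspace since $p \geq 2$), and then the cancellation of the $L$-exponents using $1/p^* = 1 - 1/p$. The paper also records the symmetry equality $\prob[\cdot \geq \delta] = \prob[\cdot \leq -\delta]$, exactly as you do.
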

\begin{proof}
By the definition of $\bz^i$ and Hoeffding's Inequality, $\forall \bx \in \feasset:$
\begin{align*}
    \prob[\innp{\bz^i, \bx} > \delta] = \prob[\innp{\bz^i, \bx} < -\delta] &= \prob\Big[\sum_{j \in J_i}r_j^i x_j > \delta L^{1/p^*} \Big]\\
    & \leq \exp \Big( - \frac{L^{2/p^*} \delta^2}{2 \sum_{j \in J_i}{x_j}^2} \Big).
\end{align*} 
As $|J_i| = L,$ by the relations between $\ell_p$ norms, $(\sum_{j \in J_i}{x_j}^2)^{1/2} \leq L^{1/2 - 1/p} (\sum_{j \in J_i}{x_j}^p)^{1/p} \leq L^{1/2 - 1/p}$. Thus, it follows that:
\begin{align*}
    \prob[\innp{\bz^i, \bx} > \delta] = \prob[\innp{\bz^i, \bx} < -\delta] 
    & \leq \Big( - \frac{L^{2/p^*} \delta^2}{2 L^{1-2/p}} \Big) = \exp\Big(- \frac{L\delta^2}{2}\Big),
\end{align*}
as claimed.
\end{proof}
To obtain the result for the nonsmooth case, we can take $\mu = 1$ and apply Theorem~\ref{thm:MetaThm}, as follows.
\ptwothmnsmooth*
\begin{proof}
For Lemma~\ref{lemma:p>2-ub} to apply, it suffices to have $M \leq \frac{1}{(4\eps)^p},$ as in the nonsmooth case $\mu = 1.$ Lemma~\ref{lemma:p>2-lb} implies that it suffices to set $\alpha = L/2 = d/(2M).$ As ${\bar \delta} = 16 \sqrt{\frac{\ln(MK/\gamma)}{\alpha}}$, to satisfy Assumption (d) from Theorem~\ref{thm:MetaThm} (which requires ${\bar\delta} \leq \eps/M$), it suffices to have:
$$
M \leq \frac{\eps}{16}\sqrt{\frac{d}{2 M\ln(MK/\gamma)}},
$$
or, equivalently: $M \leq \frac{\eps^{2/3}}{8}\left(\frac{d}{\ln(MK/\gamma)}\right)^{1/3},$ as claimed.
\end{proof}

To obtain lower bounds for the $\kappa$-weakly smooth case (where $\kappa \in [0, 1];$ $\kappa = 0$ is the nonsmooth case from the above and $\kappa = 1$ is the standard notion of smoothness), we need to, in addition to using Lemmas~\ref{lemma:p>2-ub} and~\ref{lemma:p>2-lb}, choose an appropriate local smoothing that satisfies the remaining conditions from Theorem~\ref{thm:MetaThm}. By doing so, we can obtain the following result.
\ptwothmsmooth*
\begin{proof} 
From Remark~\ref{rem:smoothings} we have that
$\ell_p^d$ is $(\kappa, \eta, \eta, \mu)$-locally smoothable (observe here that $r =\eta$) for any $0\leq \kappa\leq 1$, as long as $\mu = 2^{1-\kappa}(\min\{p ,\ln(d)\}/\eta)^{\kappa}$. 

Let $\eta = {\bar\delta}/8.$ Denote ${\bar\mu} = 2^{1-\kappa}(8\min\{p ,\ln(d)\})^{\kappa},$ so that $\mu = \frac{{\bar\mu}}{{\bar\delta}^\kappa}.$ To satisfy Assumptions (a) and (d), we need to have ${\bar\delta} \leq \min\{2\eps\mu, \eps\mu/M\},$ and it suffices to enforce $M \leq \frac{\mu\eps}{{\bar\delta}} = \frac{\eps {\bar\mu}}{{\bar\delta}^{1+\kappa}}$. To satisfy Assumption (c), 
by Lemma~\ref{lemma:p>2-lb} we can choose $\alpha = \frac{L}{2}$, which leads to the following bound on $M$:
\begin{equation}\label{eq:p>2-bnd-on-M-1}
M \leq \frac{\eps{\bar\mu}}{2^{4(1+\kappa)}}\Big(\frac{L}{2\ln(MK/\gamma)}\Big)^{\frac{1+\kappa}{2}}.
\end{equation} 
To satisfy the remaining assumption from Theorem~\ref{thm:MetaThm} (Assumption (b), using Lemma~\ref{lemma:p>2-ub}), we need to impose the following constraint on $M$:
\begin{equation}\label{eq:p>2-bnd-on-M-2}
M \leq \left(\frac{1}{4\mu\eps}\right)^{p}=\left(\frac{{\bar{\delta}}^\kappa}{4\eps {\bar\mu}}\right)^p =
\Big(\frac{2^{2(2\kappa-1)}}{\varepsilon{\bar\mu}}\Big)^p
\Big(\frac{L}{2\ln(MK/\gamma)}\Big)^{-\frac{p\kappa}{2}}
\end{equation}
The right-hand sides of the inequalities in Equations~\eqref{eq:p>2-bnd-on-M-1} and~\eqref{eq:p>2-bnd-on-M-2} are equal when 
$$
L = 2^9\ln(MK/\gamma)\cdot \left(\frac{4}{(4\bar\mu\varepsilon)^{p+1}}\right)^{2/[1+\kappa(p+1)]}
$$
and, thus, we make this choice for $L.$ As $d \geq ML,$ we also need to satisfy $M \leq d/L,$ finally leading to the claimed bound:
\begin{equation*}
    M \leq \min\left\{\bigg(\dfrac{1}{4^{1+\kappa}\bar\mu\varepsilon}\bigg)^{\frac{p}{1+\kappa(1+p)}},\; \frac{d}{2^9\ln(MK/\gamma)}\left(4^{\frac{p}{1+p}}{\bar\mu}\eps\right)^{\frac{2(1+p)}{1+\kappa(1+p)}}\right\}
\end{equation*}
The rest of the proof follows by plugging ${\bar\mu} = 2^{1+2\kappa}(\min\{p, \ln(d)\})^\kappa$ in the last equation. %
\end{proof}
\subsection{Smooth and Weakly Smooth Optimization for $1\leq p < 2$}
\oneptwothmsmooth*
\begin{proof} \textbf{Sketch}
By Dvoretzky's Theorem (see Appendix~\ref{app:background}), there exists a universal constant $\nu>0$ 
such that for any $T\leq \nu d$ there exists a subspace $F\subseteq \RR^d$ of dimension $T$, and  
a centered ellipsoid ${\cal E}\subseteq F$, such that 
\begin{equation} \label{eqn:Dvoretzky} \frac12 {\cal E} \subseteq F\cap {\cal B}_p^d \subseteq {\cal E}.\end{equation}
By an application of the Hahn-Banach theorem, we can certify that there exist vectors 
$\bg^1,\ldots,\bg^T\in {\cal B}_{p^*}^d$, such that
${\cal E}=\{ \bx\in F:\,\, \sum_{i\in [T]} \innp{ \bg^i,\bx}^2\leq 1\}.$

Consider now linear mapping $G:(\RR^d,\|\cdot\|_p)\mapsto (\RR^T,\|\cdot\|_{\infty})$ 
such that $G\bx:=(\innp{ \bg^1,\bx},\ldots,\innp{ \bg^T,\bx})$, and notice that by the  previous paragraph the operator norm of $G$ is upper bounded by 1. We observe that:
\begin{itemize}
\itemsep0em 
\item For any $f\in {\cal F}_{\ell_{\infty}^T}^{\kappa}(\mu)$, function $\tilde f:=f\circ G$ belongs to 
${\cal F}_{\ell_p^d}^{\kappa}(\mu)$. In other words, the whole function class ${\cal F}_{\ell_{\infty}^T}^{\kappa}(\mu)$
can be obtained from ${\cal F}_{\ell_p^d}^{\kappa}(\mu)$ through the linear embedding $G$.
\item We claim that any local oracle for the class $\{ \tilde f:\,\, f\in {\cal F}_{\ell_{\infty}^T}^{\kappa}(\mu)\}$
can be obtained from a local oracle for the class  ${\cal F}_{\ell_p^d}^{\kappa}(\mu)$
(for a proof of this claim, see \cite[Appendix C]{Guzman:2015}).
\item From \eqref{eqn:Dvoretzky}, the set ${\cal Y}=G{\cal B}_p^d$ is such that
$ \frac{1}{2\sqrt{T}} {\cal B}_{\infty}^T\subseteq\frac12 {\cal B}_2^T \subseteq {\cal Y} \subseteq {\cal B}_2^T.$
\end{itemize}
From these facts, we can conclude that the oracle complexity over $\feasset$ with function class ${\cal F}_{\ell_p^d}^{\kappa}(1)$ is at least the one obtained in the embedded space ${\cal Y}$ with the respective embedded function class ${\cal F}_{\ell_{\infty}^T}^{\kappa}(1)$, thus
\begin{align*}
\mbox{Compl}_{\mbox{\footnotesize HP}}^{\gamma}({\cal F}_{\ell_{p}^d}^{\kappa}(1),\feasset,K,\varepsilon)
&\geq \mbox{Compl}_{\mbox{\footnotesize HP}}^{\gamma}({\cal F}_{\ell_{\infty}^T}^{\kappa}(1),{\cal Y},K,\varepsilon)\\
&\geq \, \mbox{Compl}_{\mbox{\footnotesize HP}}^{\gamma}({\cal F}_{\ell_{\infty}^T}^{\kappa}(1),{\cal B}_{\infty}^T(0,1/[2\sqrt T]),K,\varepsilon)
\end{align*}
Denote $\eps' = 2\eps\sqrt{T}$. By Theorem~\ref{thm:p>2-smooth} applied to $p=
\infty$, together with Remark~\ref{rem:rescaling_LB}, we get that it is sufficient to require, as long as $T \leq \nu d,$ that:
\begin{align*}
    M = \min\bigg\{& \frac{1}{\ln(T)}\Big(\frac{1}{2^{3+4\kappa}\eps'}\Big)^{1/\kappa},\; \frac{T\ln^2(T)}{2^9\ln(\nu d K/\gamma)}\big(2^{3+2\kappa}\eps'\big)^{2/\kappa}  \bigg\}\\
    = \min\bigg\{& \frac{1}{\ln(T)}\Big(\frac{1}{2^{4(1+\kappa)}\eps\sqrt{T}}\Big)^{1/\kappa}, \;  \frac{T\ln^2(T)}{2^9\ln(\nu d K/\gamma)}\big(2^{2(2+\kappa)}\eps\sqrt{T}\big)^{2/\kappa}\bigg\}.
\end{align*}
In the last expression, the left term in the minimum is lower whenever:
$$
T \ln^2 T \geq (2^9\ln(d K/\gamma))^{\frac{2\kappa}{3+2\kappa}} \Big(\frac{1}{2}\Big)^\frac{8(2+3\kappa)}{3+2\kappa} \Big(\frac{1}{\eps}\Big)^{\frac{6}{3+2\kappa}},
$$
and it suffices to choose:
$$
T = \left\lceil 2(\ln(\nu d K/\gamma))^{\frac{2\kappa}{3+2\kappa}}\Big(\frac{1}{\eps}\Big)^{\frac{6}{3+2\kappa}}\right\rceil.
$$
Under this choice, as long as $d \geq T/\nu$, the oracle complexity is lower bounded by:
$$
M = \frac{c_\kappa}{\ln(1/\eps) + \kappa \ln\ln(dK/\gamma)}\Big(\frac{1}{\eps}\Big)^{\frac{2}{3+2\kappa}},
$$
where $c_\kappa$ is an absolute constant that only depends on $\kappa,$ as claimed.
\end{proof}
\end{document}